\DeclareFontFamily{OT1}{pzc}{}
\DeclareFontShape{OT1}{pzc}{m}{it}{<-> s * [1.10] pzcmi7t}{}
\DeclareMathAlphabet{\mathpzc}{OT1}{pzc}{m}{it}
\newcommand{\defeq}{\vcentcolon=}
\def\co{\colon\thinspace} 
\mathchardef\mhyphen="2D
\newcommand{\orcid}[1]{\href{https://orcid.org/#1}{\textcolor[HTML]{A6CE39}{\aiOrcid}}}
\numberwithin{equation}{subsection}
\newtheorem{thmA}{Theorem}
\newtheorem{corA}[thmA]{Corollary}
\newtheorem{theorem}{Theorem}[subsection]  
\newtheorem{lemma}[theorem]{Lemma} 
\newtheorem{proposition}[theorem]{Proposition}
\newtheorem{corollary}[theorem]{Corollary}
\newtheorem{example}[theorem]{Example}
\theoremstyle{remark} 
\newtheorem{definition}[theorem]{Definition}
\newtheorem{remark}[theorem]{Remark}
\newcommand{\spec}{\mathrm{Spec}\:}
\newcommand{\trop}{\mathpzc{trop}}
\newcommand{\bend}{\mathpzc{B}}
\newcommand{\Hom}{\mathrm{Hom}}
\DeclareMathOperator*{\colim}{colim} 
\newcommand{\sslash}{\mathbin{/\mkern-6mu/}}
\newcommand{\semirings}{\mathbf{\mathrm{Semirings}}}
\newcommand{\Dsemirings}{\mathbf{\mathrm{DiffSemirings}}}
\newcommand{\pairs}{\mathbf{\mathrm{Pairs}}}
\newcommand{\reducedpairs}{\mathbf{\mathrm{Pairs}}_\mathrm{red}}
\title{A general framework for tropical differential equations}
\author{Jeffrey Giansiracusa}
\email{jeffrey.giansiracusa@durham.ac.uk} 
\address{Department of Mathematical Sciences, Durham University, Upper Mountjoy Campus, Stockton Road, Durham DH1 3LE, UK. \\ ORCiD: 0000-0003-4252-0058}
\author{Stefano Mereta*}
\email{Stefano.Mereta@mis.mpg.de}
\address{ Max Planck Institute for Mathematics in the Sciences, Inselstraße 22, 04103 Leipzig, Germany \\ ORCiD: 0000-0002-4549-2534}
\date{\today}
\subjclass[2010]{Primary 14A20;
Secondary 12H99, 13N99, 14T05, 14T99}
\keywords{tropical geometry; algebraic differential equations; idempotent semirings}
\begin{document}
\begin{abstract}
We construct a general framework for tropical differential equations based on idempotent semirings
and an idempotent version of differential algebra. Over a differential ring equipped with a
non-archimedean norm enhanced with additional differential information, we define tropicalization of
differential equations and tropicalization of their solution sets. This framework includes rings of
interest in the theory of $p$-adic differential equations: rings of convergent power series over a
non-archimedean normed field.  The tropicalization records the norms of the coefficients. This gives
a significant refinement of Grigoriev's framework for tropical differential equations. We then prove
a differential analogue of Payne's inverse limit theorem: the limit of all tropicalizations of a
system of differential equations is isomorphic to a differential variant of the Berkovich
analytification.
\end{abstract}
\maketitle

\section{Introduction}

Algebraic ODEs are systems of differential equations formed from polynomial expressions in an
indeterminate function $f$ and its derivatives.  The algebraic theory was first developed by Ritt
\cite{Ritt} and collaborators.  Many important classes of models from the natural sciences, such as
chemical reaction networks, are algebraic ODEs, and in pure mathematics algebraic ODEs appear in
many parts of geometry, including periods and monodromy. Understanding their solutions and
singularities has many important consequences in pure and applied mathematics.

In this paper we pursue the further development of the tropical mathematics tool set for studying
differential equations.  Tropical geometry is a field at the interface between combinatorics,
computational algebra/geometry, and algebraic geometry. One of the foci of tropical geometry is the
study of non-archimedean amoebae of affine varieties over non-archimedean normed fields, which can
be viewed as combinatorial shadows of varieties. Tropical geometry has provided important
computational and theoretical tools for algebraic geometry, and we hope to open the door to similar
applications in differential algebra.

In \cite{Grigoriev-TDE} Grigoriev first introduced a theory of tropical differential equations and
defined a framework for tropicalizing algebraic ODEs over a ring of formal power series $R[[t]]$.
In this framework, one tropicalizes a differential equation by recording the leading power of $t$
in each coefficient, and one tropicalizes a power series solution simply recording the powers of $t$ that
are present.  

Solutions to a differential equation tropicalize to solutions to its tropicalization, and Grigoriev
asked if \emph{all} solutions to the tropicalization of an equation arise as tropicalizations of
classical solutions; i.e., is the map from classical solutions to tropical solutions surjective?
This is the differential equation analogue of the Fundamental Theorem of Tropical Geometry
\cite[Theorem 3.2.3]{Maclagan-Sturmfels}, and this question was answered positively by Aroca et
al.~in \cite{AGT-fundamental-theorem} (assuming $R$ is an uncountable algebraically closed field of
characteristic 0). These ideas have also been extended to the case of algebraic \emph{partial}
differential equations in \cite{Falkensteiner-Fundamental-theorem}.

Paralleling the role of Gr{\"o}bner theory in defining tropical varieties in the non-differential
setting, \cite{Fink-differential-initial-forms} and \cite{Hu-tropical-groebner} define initial forms
and develop a Gr{\"o}bner-theoretic approach to Grigoriev's tropical differential equations.  A similar
approach is also presented in \cite{Cotterill-DE}, which also gives an illuminating account of
tropical ordinary and partial differential equations (in part based on a preliminary report of the
algebraic perspective presented here).

A limitation present in all of the above work is that the tropicalization construction studied there
records only the powers of $t$ present in a power series solution; it does not record any
information about the norms of the coefficients. Thus any information about convergence of power
series solutions is lost when using Grigoriev's tropicalization.  In the theory of $p$-adic
differential equations, one of the central problems is to understand the radii of convergence of
formal power series solutions, which are controlled by the norms of the coefficients.  

\subsection{Results}
The main purpose of this work is to build a refinement of Grigoriev's framework that records and
incorporate the norms of the coefficients in a power series solution so that convergence information
is encoded in tropical solutions.  This requires developing a theory of differentials on idempotent
semirings in which the usual Leibniz rule is weakened to a tropical Leibniz rule, and this
development includes constructing free tropical differential algebras (a tropical analogue of Ritt
algebras).

We give a brief explanation of our framework here. A \emph{tropical pair} $\mathbf{S}=(S_1 \to S_0)$
is a tropical differential semiring $S_1$ and a homomorphism to a semiring $S_0$. The coefficients
of tropical differential equations live in $S_0$.  Solutions live in $S_1$ (where they can be
differentiated), but the condition that tests if something is a solution takes place in $S_0$.  We
think of $S_0$ as recording the leading behavior of elements of $S_1$.  The primary example of a
tropical pair has $S_1 = \mathbb{T}[[t]]$ (the semiring of formal power series with tropical real
number coefficients), $S_0 = \mathbb{R}^2_\mathit{lex} \cup \{\infty\}$ is a rank 2 version of the
tropical semiring, and the map $S_1 \to S_0$ sends $at^n + \cdots$ to $(n,a)$.

We now state out main results informally. 



\begin{thmA}
We construct a category of $\mathbf{S}$-algebras, and to a set $E$ of tropical differential equations over
$\mathbf{S}$ we associate an object of this category such that morphisms to an $\mathbf{S}$-algebra
$\mathbf{T}$ are in natural bijection with solutions to $E$ with values in $\mathbf{T}$.
\end{thmA}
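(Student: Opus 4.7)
The plan is to construct the representing object for the functor of solutions, following the template familiar from classical affine algebraic geometry but with bend congruences replacing ideal quotients, as is standard in idempotent algebraic geometry.

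First, I would construct the free $\mathbf{S}$-algebra on a set of generators indexed by the variables appearing in $E$. Since $\mathbf{S} = (S_1 \to S_0)$ is a pair with $S_1$ carrying the differential structure, the free $\mathbf{S}$-algebra on variables $\{x_i\}$ should be a pair $(R_1 \to R_0)$ where $R_1$ is the free tropical differential $S_1$-algebra (in which each $x_i$ acquires formal derivatives $x_i^{(k)}$ for $k \geq 0$), $R_0$ is the tropical polynomial semiring over $S_0$ on the family of symbols $\{x_i^{(k)}\}_{i,k}$, and the structure map is obtained by extending the structure map of $\mathbf{S}$ while sending each $x_i^{(k)}$ on the $S_1$-side to the corresponding variable on the $S_0$-side. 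The construction of free tropical differential algebras (a tropical analogue of Ritt algebras) is carried out earlier in the paper, and fits together with the pair formalism by functoriality.

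Next, each equation $f \in E$ sits inside $R_0$, and I would associate to $f$ its \emph{bend congruence} $\bend(f)$ on $R_0$: the congruence generated by identifying $f$ with each of the sub-polynomials obtained by deleting a single monomial term. A semiring homomorphism out of $R_0$ respects $\bend(f)$ precisely when the image of $f$ has its defining tropical minimum attained at least twice, which is the standard tropical notion of a solution. The object associated to $E$ is then the pair $\bigl(R_1 \to R_0 / \bigvee_{f \in E} \bend(f)\bigr)$, with the structure map adjusted by postcomposition with the quotient. The universal property is then essentially formal: an $\mathbf{S}$-algebra morphism from this pair to $\mathbf{T} = (T_1 \to T_0)$ is a compatible pair of morphisms; the map $R_1 \to T_1$ amounts to a tuple $\{y_i^{(k)}\}$ of elements of $T_1$ compatible with the derivation, and the condition that all the bend congruences are respected downstairs is exactly the condition that the induced elements of $T_0$ form a tropical solution of $E$.

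The main obstacle I anticipate is setting everything up correctly at the level of pairs. One needs the category of $\mathbf{S}$-algebras to admit quotients by congruences that live only on the bottom of the pair (or to specify how such congruences lift compatibly to the top), and one must choose the notion of $\mathbf{S}$-algebra morphism carefully so that solutions in $\mathbf{T}$ and morphisms to $\mathbf{T}$ correspond bijectively. A further subtle point is the interplay between the differential structure on $S_1$ and the fact that equations are tested in $S_0$: the formal derivative symbols upstairs must be correctly matched by the corresponding variables downstairs so that evaluating a differential equation really tests the equation against the proposed derivatives of the solution rather than against independent bottom-level symbols. Once the pair formalism and the bend congruence machinery are installed in the appropriate generality, with enough colimits to form the join $\bigvee_{f \in E} \bend(f)$, the theorem itself should fall out cleanly.
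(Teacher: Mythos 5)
Your overall architecture --- a free object on the variables, a quotient by the bend congruences of the equations imposed on the bottom of the pair, and a formal verification of the universal property --- is exactly the paper's route. But there is a genuine gap at precisely the point you flag as subtle, and your setup does not resolve it. You take the bottom of the free pair to be the polynomial semiring over $S_0$ on the derivative symbols $x_i^{(k)}$, with the structure map determined by sending each $x_i^{(k)}$ upstairs to the corresponding variable downstairs. This does not define a semiring homomorphism on all of $R_1$: the free tropical differential $S_1$-algebra is strictly larger than the polynomial algebra on the symbols $x_i^{(k)}$, because the tropical Leibniz rule only constrains $d(xy)$ by the bend relations of $d(xy)\oplus x\,dy\oplus y\,dx$ and does not determine it. The paper proves that no tropical differential on the ``basic'' polynomial algebra makes it free, and instead builds the free object $S_1\{x_1,\ldots,x_n\}$ out of labelled forests; this object contains elements such as $d(x_1x_2)$ that are not polynomials in the $x_i^{(k)}$, so your prescription leaves the structure map undefined on them, with no canonical choice available. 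The pair $(R_1\to R_0)$ you describe is therefore not well-defined.

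The paper's fix is to take the bottom of the pair to be the pushout $(S_0\,|\,S_1)\{x_1,\ldots,x_n\}$ of $S_0\leftarrow S_1\to S_1\{x_1,\ldots,x_n\}$: concretely the same forest algebra, but with root-incident leaves relabelled in $S_0$. This receives all of $S_1\{x_1,\ldots,x_n\}$ by construction, and it contains your $S_0\{x_1,\ldots,x_n\}_{\mathit{basic}}$ as the subalgebra in which the bend congruences of the equations are imposed; with that change the rest of your argument goes through essentially as you describe, since the morphism downstairs is then forced by the one upstairs together with the structure map of $\mathbf{T}$, and descent to the quotient is exactly the condition that $\bigoplus_\alpha f_\alpha\pi(C^\alpha)$ tropically vanishes. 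Two smaller omissions: the paper's category of $\mathbf{S}$-algebras consists of \emph{reduced} pairs, so the quotient $\mathbf{S}\{x_1,\ldots,x_n\}\sslash\bend(E)$ includes a reduction step (harmless for the bijection, since maps to reduced pairs factor uniquely through the reduction, but necessary for the object to live in the stated category); and the matching of derivative symbols upstairs and downstairs that worries you is automatic once the bottom is the pushout rather than an independently chosen polynomial algebra.
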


A system of algebraic differential equations over a field $k$ is represented in coordinate-free form
by a differential $k$-algebra $A$.  To tropicalize $A$, we need two pieces of additional data:
\begin{enumerate}
\item A non-archimedean norm on $k$ taking values in an idempotent semiring $S_0$, and a
\emph{differential enhancement} of the norm, which is a lifting to a map $A \to S_1$ that commutes with the
differential.  (These notions are defined in Sections \ref{sec:seminorms} and
\ref{sec:differential-enhancement}.)
\item A system of generators $x_i \in A$ so that $A$ is presented as a quotient of a Ritt algebra
$k\{x_1, \ldots, x_n\} \twoheadrightarrow A$.
\end{enumerate}

Any differential algebra $A$ admits a universal presentation $k\{x_a \: | \: a\in A\} \to A$.
Tropicalizing this presentation, we find:

\begin{thmA}\label{thmn:colimit}
The tropicalization of $A$ with respect to its universal presentation is the colimit of its
tropicalizations with respect to finite presentations.  
\end{thmA}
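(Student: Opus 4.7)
The plan is to realise the universal presentation as a filtered colimit of finite presentations and then to observe that the tropicalization construction preserves this colimit, essentially by its very definition.

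First I would organise the finite presentations into a filtered diagram indexed by the poset of finite subsets $F \subseteq A$ under inclusion. To each $F$ I associate the partial presentation obtained by restricting the universal one, namely the map $k\{x_a \: | \: a \in F\} \to A$, and to each inclusion $F \subseteq F'$ the evident monomorphism of Ritt algebras over $A$. This poset is filtered (binary joins are unions), and its colimit in the category of presentations of $A$ is tautologically the universal presentation $k\{x_a \: | \: a \in A\} \to A$, since every generator lives in some finite $F$.

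Next I would decompose the tropicalization functor into two manifestly cocontinuous steps. Given a presentation $k\{X\} \to A$ equipped with the differential enhancement, its tropicalization first builds the free tropical differential $\mathbf{S}$-algebra on $X$ (with norms of generators read off from the enhancement), and then quotients by the bend congruence generated by tropicalising the kernel of the presentation. The free functor, being a left adjoint (this is a consequence of Theorem~A, which furnishes the relevant representability in the category of $\mathbf{S}$-algebras), preserves all colimits. Quotienting by a congruence is itself a coequalizer and hence a colimit. Both operations therefore commute with filtered colimits in the category of $\mathbf{S}$-algebras, and tropicalization carries the filtered colimit of finite presentations into a filtered colimit of $\mathbf{S}$-algebras.

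Finally, I would verify the compatibility of the two steps: the bend congruence associated to the universal presentation must agree with the colimit of the bend congruences associated to the finite presentations. Each differential polynomial $p$ in the kernel of the universal presentation involves only finitely many of the $x_a$, so its tropical bend relation $\bend(\trop(p))$ is already present at some finite stage $F$; filteredness then shows that the universal congruence is the directed union of the finite ones. The main obstacle, and the step requiring the most care, is precisely this last point: one must show that the \emph{generation} of a congruence by bend relations is a finitary operation that commutes with filtered colimits in $\mathbf{S}$-algebras. This reduces to checking that each defining axiom of the bend congruence (reflexivity, symmetry, transitivity, and closure under the semiring and differential operations) involves only finitely many inputs, which is clear from their form, so passage to filtered colimits of presentations indeed commutes with the congruence-closure operation and yields the desired identification.
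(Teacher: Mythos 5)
Your overall strategy---realise the universal presentation as a filtered colimit of finite ones, observe that tropicalization is assembled from cocontinuous pieces, and reduce everything to the finitary nature of congruence generation---is essentially the paper's argument, and your identification of the congruence-closure step as the crux is exactly right. But there are two concrete gaps. The first concerns your indexing diagram: for an arbitrary finite subset $F \subseteq A$ the map $k\{x_a \: | \: a \in F\} \to A$ is generally \emph{not surjective}, so it is not an object of the category of finite presentations over which the theorem's colimit is taken. To repair this you must work with finite subsets that generate $A$ as a differential $k$-algebra (and check that these are cofinal among all finite subsets, so the two colimits agree); this is only possible under the hypothesis that $A$ admits a finite presentation, a hypothesis your argument never invokes but without which the finite-presentation statement fails outright (the colimit over the empty category of finite presentations is initial, not $\mathbf{trop}(\mathit{Univ})$). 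The same issue resurfaces in your final step: the finite set of variables appearing in a relation $(f \sim g)$ need not generate $A$, so it must be enlarged by finitely many further elements before it yields an actual finite presentation witnessing that relation.

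The second gap is that $\mathbf{trop}(\alpha)$ is by definition the \emph{reduction} of the quotient pair $S_1\{x_i\} \to (S_0|S_1)\{x_i\}/\bend\trop(\ker\alpha)$, and your two-step decomposition (free $\mathbf{S}$-algebra, then quotient by the bend congruence) omits this third step entirely. Reduction does commute with the colimit, but only because it is a left adjoint to the inclusion of reduced pairs (Proposition \ref{prop:reduction-is-left-adjoint}); this must be said explicitly, since a colimit of reduced pairs computed naively in $\pairs$ need not be reduced. With the indexing category corrected, the finite-generation hypothesis made explicit, and the reduction step accounted for, your argument becomes the paper's proof.
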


Finally, we provide evidence for the appropriateness of our definitions and framework by proving a
differential analogue of Payne's inverse limit theorem \cite{Payne}.  Recall that, given an algebra
$A$ over a non-archimedean field $k$, the underlying set of the Berkovich analytification of $\spec
A$ is the set of all multiplicative seminorms on $A$ that are compatible with the norm on $k$.  Now
suppose that $k$ is a differential ring, the norm $v$ on $k$ has a differential enhancement
$\widetilde{v}$ taking values in a pair $\mathbf{S}$, and $A$ is a differential algebra over $k$. In
this setting, given an $\mathbf{S}$-algebra $\mathbf{T}=(T_1 \to T_0)$, we can consider the set of
all pairs $(w,\widetilde{w})$ where $w\co A\to T_0$ is a multiplicative seminorm on $A$ compatible
with $v$ and $\widetilde{w}\co A \to T_1$ is a differential enhancement of $w$ compatible with
$\widetilde{v}$. We call this the $\mathbf{T}$-valued \emph{differential Berkovich space of $A$},
denoted $\mathit{Berk}_{\mathbf{T}}(A)$.

\begin{thmA}
There is a universal valuation with differential enhancement on $A$, and it takes values in the
tropicalization of the universal presentation of $A$.  Hence the tropicalization of the universal
presentation corepresents the functor $\mathbf{T}
\mapsto \mathit{Berk}_{\mathbf{T}}(A)$.
\end{thmA}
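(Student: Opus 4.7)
The plan is to combine the first Theorem~A, which provides a representing object for the functor of $\mathbf{T}$-valued solutions to a system of tropical differential equations, with the universal presentation of $A$ as a differential $k$-algebra. Concretely, given $A$ a differential $k$-algebra equipped with a norm $v\co k\to S_0$ and differential enhancement $\widetilde{v}\co k\to S_1$, let $I\subseteq k\{x_a : a\in A\}$ denote the kernel of the universal presentation $k\{x_a : a\in A\}\twoheadrightarrow A$. Each $P\in I$ is a differential polynomial that vanishes in $A$, and via $\widetilde{v}$ it tropicalizes to a tropical differential equation $\Trop(P)$ over $\mathbf{S}$. I would define the tropicalization $\mathbf{B}=(B_1\to B_0)$ of the universal presentation to be the $\mathbf{S}$-algebra associated by the first Theorem~A to the system $E=\{\Trop(P):P\in I\}$.

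Next I would construct the universal valuation with differential enhancement. There is a tautological assignment $\widetilde{u}\co A\to B_1$ sending $a\in A$ to (the image in $B_1$ of) the generator $x_a$; let $u\co A\to B_0$ be its composition with $B_1\to B_0$. The verification that $(u,\widetilde{u})$ is a multiplicative seminorm with differential enhancement compatible with $(\widetilde{v},v)$ amounts to unpacking the construction of $\mathbf{B}$ as a quotient of a free tropical Ritt-type algebra: the relations forcing $\Trop(P)$ to vanish in $B_0$ for $P\in I$ are exactly the relations which guarantee that $\widetilde{u}$ is compatible with sums, products, and derivations in $A$ after passing to the idempotent tropical setting.

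To establish the universal property, I would take an arbitrary $\mathbf{S}$-algebra $\mathbf{T}=(T_1\to T_0)$ and an element $(w,\widetilde{w})\in \mathit{Berk}_{\mathbf{T}}(A)$, i.e.\ a compatible pair with $w$ a multiplicative seminorm on $A$ and $\widetilde{w}\co A\to T_1$ its differential enhancement. The map $\widetilde{w}$ assigns an element of $T_1$ to each $a\in A$, hence specifies a $\mathbf{T}$-valued point of the free Ritt algebra $k\{x_a : a\in A\}$. The conditions that $w$ be multiplicative and compatible with the norm on $k$, and that $\widetilde{w}$ be a differential enhancement, are precisely what ensures that this point satisfies, in $T_0$, every tropical equation $\Trop(P)$ with $P\in I$. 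The first Theorem~A then yields a unique morphism $\mathbf{B}\to\mathbf{T}$ of $\mathbf{S}$-algebras through which $(w,\widetilde{w})$ factors via $(u,\widetilde{u})$. Uniqueness is automatic from the fact that the generators $x_a$ generate $\mathbf{B}$.

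The main obstacle will be the careful bookkeeping in the second paragraph: matching, on one hand, the definition of ``multiplicative seminorm together with differential enhancement compatible with $\widetilde{v}$'' (a structural condition on a pair of maps $A\to T_0$ and $A\to T_1$) with, on the other hand, the condition that the induced map on the Ritt algebra $k\{x_a\}\to T_1$ is a solution to every tropicalized relation $\Trop(P)$. Because tropical multiplicativity and additivity become inequalities in the idempotent setting and the differential satisfies only a tropical Leibniz rule, one must check that no information is lost or spuriously added in passing from $I$ to $\{\Trop(P):P\in I\}$; here the differential enhancement hypothesis on $\widetilde{v}$ and the explicit description of $\mathbf{B}$ from the preceding sections should make the correspondence tight.
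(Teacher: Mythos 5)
Your overall strategy is the same as the paper's: take the tropicalization of the universal presentation, define the tautological map $a\mapsto x_a$, and use the corepresentability of solution sets to produce the unique morphism to any $\mathbf{T}$ carrying a compatible differentially enhanced seminorm. However, the step you defer as ``careful bookkeeping'' is in fact the mathematical substance of the theorem, and as stated your argument has a genuine gap there. The congruence defining the bottom of $\mathbf{trop}(\mathit{Univ})$ is generated by the bend relations of $\trop(P)$ for \emph{every} $P$ in the kernel ideal $I=\ker\mathit{Univ}$, not just for a generating set. The seminorm and enhancement axioms for $(w,\widetilde{w})$ hand you directly only the bend relations of the tropicalizations of a specific family of generators of $I$ (namely $x_1-1$, $x_{\lambda a}-\lambda x_a$, $x_{da}-dx_a$, $x_ax_b-x_{ab}$, and $x_a+x_b+x_c$ with $a+b+c=0$). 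Since tropicalizing a generating set of an ideal does not in general generate the tropicalization of the ideal (the tropical basis problem), you must prove that in this case it does. The paper does this in two steps: first an explicit description of the generators of $\ker\mathit{Univ}$, and then a ``strong tropical basis'' lemma showing that the bend relations of those five families imply the bend relations of $\trop(f)$ for arbitrary $f\in I$; the nontrivial part is an induction on the cardinality of finite subsets $\Lambda\subset A$ with $\sum_{a\in\Lambda}a=0$, reducing an $(n+1)$-term bend relation to a $3$-term one and an $n$-term one. Without this, the claimed descent of the map $S_0\{x_a\}_{\mathit{basic}}\to T_0$ to the quotient is unjustified.

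A second, smaller omission: you need $\widetilde{u}$ to commute with the differentials, i.e.\ $d(x_a)=x_{da}$ in the top semiring of the tropicalization. This is not true in $S_1\{x_a\mid a\in A\}$ itself (there $dx_a$ and $x_{da}$ are distinct elements); it only holds after passing to the reduction of the pair, because the two elements have equal images downstairs and the pair of their iterated derivatives generates a differential congruence contained in the kernel of the structure map. Your proposal never invokes reducedness, so this verification is missing as well.
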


Combining this with Theorem \ref{thmn:colimit}, we immediately obtain our differential analogue of
Payne's inverse limit theorem.

\begin{corA}
Let $k$ be a differential ring equipped with a non-archimedean seminorm and differential enhancement
taking values in $\mathbf{S}$, let $A$ be a differential algebra over $k$, and let $\mathbf{T}$ be
an $\mathbf{S}$-algebra. The set $\mathit{Berk}_{\mathbf{T}}(A)$ is isomorphic to the inverse limit
of the $\mathbf{T}$-valued solution sets of the tropicalizations of all finite presentations of $A$.
\end{corA}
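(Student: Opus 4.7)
The plan is to obtain the corollary as a formal consequence of the two preceding Theorems~A, connected by a Yoneda-type manipulation.

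First I would unpack the conclusion of the immediately preceding Theorem~A: the tropicalization $\Trop(A)$ of the universal presentation of $A$ corepresents the functor $\mathbf{T}\mapsto \mathit{Berk}_{\mathbf{T}}(A)$, so
\[
\mathit{Berk}_{\mathbf{T}}(A)\;\cong\;\Hom\bigl(\Trop(A),\mathbf{T}\bigr)
\]
in the category of $\mathbf{S}$-algebras. Next I would invoke Theorem~\ref{thmn:colimit}, which expresses $\Trop(A)$ as the colimit, over the diagram of finite presentations $F$ of $A$, of the tropicalizations $\Trop_F(A)$. Since $\Hom(-,\mathbf{T})$ converts colimits in its first argument into limits, this yields
\[
\Hom\bigl(\Trop(A),\mathbf{T}\bigr) \;\cong\; \lim_F\,\Hom\bigl(\Trop_F(A),\mathbf{T}\bigr).
\]
Finally, I would apply the first Theorem~A: for each finite system of tropical differential equations, $\mathbf{S}$-algebra morphisms from its associated tropicalization into $\mathbf{T}$ correspond bijectively and naturally with $\mathbf{T}$-valued solutions. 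Applying this to each finite presentation $F$ and composing the three bijections gives the stated isomorphism.

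I expect no serious obstacle. The only real care required is bookkeeping about indexing categories: one must verify that the diagram of finite presentations used in Theorem~\ref{thmn:colimit} matches, after reversing variance via $\Hom(-,\mathbf{T})$, the diagram whose inverse limit appears in the statement of the corollary, and that the connecting maps coming from colimit transition morphisms agree with the restriction maps on $\mathbf{T}$-valued solution sets induced by inclusions of generating subsets of $A$. Once this compatibility is confirmed, nothing further is needed: the proof is purely formal, since the genuine mathematical content (construction of tropicalizations, their universal properties, corepresentability of the differential Berkovich functor, and the colimit description of the universal presentation) has all been packaged into the two Theorems~A on which this corollary rests.
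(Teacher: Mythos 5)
Your proposal is correct and is precisely the argument the paper intends: it derives the corollary by composing the corepresentability of $\mathbf{T}\mapsto \mathit{Berk}_{\mathbf{T}}(A)$ by $\mathbf{trop}(\mathit{Univ})$, the colimit description of $\mathbf{trop}(\mathit{Univ})$ over finite presentations, the fact that $\Hom(-,\mathbf{T})$ sends colimits to limits, and the identification of $\Hom$ out of a tropicalized presentation with the $\mathbf{T}$-valued solution set. The paper records this as an immediate consequence of the two theorems, so no further comparison is needed.
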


\subsection*{Acknowledgements}
We thank Alex Fink and Zeinab Toghani for many useful discussions and for organizing an influential
workshop at QMUL in December 2019 in which the tropical differential equations community came
together and shared many ideas.  We also thank Andrea Pulita for his encouragement to do this.  S.M.
gratefully acknowledges the support of a Swansea University Strategic Partnerships Scholarship and
funding from Université Grenoble Alpes IDEX.

\section{Semirings and non-archimedean seminorms}

Several algebraic foundations for tropical geometry have been developed, including hyperfields
\cite{Viro, Baker-Bowler,Lorscheid-hyperfield,Jun-hyperfields} Lorscheid's blueprints
\cite{Lorscheid-blueprints1,Lorscheid-scheme-theoretic}, and idempotent semirings
\cite{GG1,GG2,GG3,Maclagan-Rincon-1,Maclagan-Rincon-2,Maclagan-Rincon-3,Mincheva-Joo-1,Mincheva-Joo-2,Betram-Easton,Yaghmayi,Noah-module-theoretic}.
For the present work, we find that idempotent semirings provide the most convenient language for the
development of our theory.

\subsection{Idempotent semirings}
An idempotent semiring is a semiring $(S, \oplus, \otimes)$ in which addition is an idempotent
operation:  $a \oplus a = a$.  An idempotent semiring carries a canonical partial order defined by
$a \leq b$ if $a\oplus b = b$.  The additive unit $0_S$ is the unique minimal element.  
In a semiring, we will often write the product $a\otimes b$ simply as $ab$.

\begin{example}
\begin{enumerate}
\item Let $\mathbb{T}=\mathbb{R}_{\geq 0}$ with the operations $a\otimes b = ab$  (ordinary multiplication), and 
$a \oplus b =   \mathrm{max}(a,b)$.    Note that the map $a\mapsto -\mathrm{log}(a)$ gives an
isomorphism to the usual tropical semiring $(\mathbb{R}\cup \{\infty\}, \mathrm{min},+)$.

\item Let $\mathbb{T}_n$ denote $(\mathbb{R}_{> 0})^n \cup \{(0,\ldots, 0)\}$.  We define $a\oplus b
= \mathrm{max}(a,b)$, where the maximum is taken with respect to the lexicographic ordering on the
positive orthant, and we define $a \otimes b$ to be component-wise multiplication.

\item The boolean semiring $\mathbb{B}$ is the sub-semiring $\{0,1\} \subset \mathbb{T}$.  Note that
$\mathbb{B}$ is the initial object in semirings.
\end{enumerate}
\end{example}

\subsection{Congruences and bend relations}
A quotient of a ring $R$ is defined by an equivalence relation on $R$ such that the ring structure
descends to the set of equivalence classes.  Such equivalence relations are of course in bijection
with ideals via the correspondence
\begin{align*}
	\text{ideal } I &\mapsto \text{equivalence relation } \{a \sim b \text{ if } a-b \in I\},\\
	\text{equivalence relation } K &\mapsto \text{ideal } \{a-b \:\: | \:\: a \sim_K b\}.
\end{align*}
This correspondence does not hold for semirings in general, and so we must work with the equivalence relations
themselves when defining quotients.

Recall that a \emph{congruence} on a semiring $S$ is an equivalence relation $K \subset S\times S$
that is also a subsemiring. If $K$ is a congruence on $S$, then the semiring structure on $S$
descends to a well-defined semiring structure on $S/K$.  Moreover, if $f\co S \twoheadrightarrow S'$ is a
surjective homomorphism of semirings then its kernel congruence $\mathrm{ker}\:f=  \{(a,b) \:\: | \:\: f(a) =
f(b)\}$ is indeed a congruence and $S/\mathrm{ker}\: f \cong S'$.

Given a set of binary relations $X \subset S\times S$, the congruence it generates can be described
concretely.  First take the subsemiring of $S\times S$ generated by $X$, and then take the
transitive and symmetric closure of this.  See \cite[Lemma 2.4.5]{GG1}.

We now come to a class of congruences on idempotent semirings that are essential in tropical
geometry. Given an expression $a_1 \oplus a_2 \oplus \cdots \oplus a_n$ in an idempotent semiring
$S$, the \emph{bend relations} of this expression, written $\bend(a_1 \oplus \cdots \oplus a_n)$, is
the congruence on $S$ generated by the relations
\[
	B_j\co \bigoplus_{i=1}^n a_i \sim \bigoplus_{i=1, i\neq j}^n a_i
\]
for each $j = 1 \ldots n$.  As special cases, when $n=2$, $\bend(a\oplus b)$ is
generated by the single relation $a \sim b$.  When $n=3$, $\bend(a\oplus b\oplus c)$ is generated by the relations
\[
	a \oplus b \oplus c \sim a \oplus b \sim a \oplus c \sim b\oplus c.
\]

The motivation for the bend relations stems from the following fact.
Recall that the \emph{tropical hypersurface} of a tropical polynomial $f \in \mathbb{T}[x_1, \ldots,
x_n]$ can be described away from the boundary of $\mathbb{T}^n$ as the locus where the graph of $f$
is non-linear (with respect to the $\mathbb{R} \cup \{\infty\}$ parametrization of $\mathbb{T}$).

\begin{proposition}[Prop. 5.1.6 of \cite{GG1}]
Given a tropical polynomial $f \in \mathbb{T}[x_1, \ldots, x_n]$, the tropical hypersurface of $f$
is precisely the set of homomorphisms $\mathbb{T}[x_1, \ldots, x_n] / \bend(f) \to \mathbb{T}$.
\end{proposition}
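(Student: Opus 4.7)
The plan is to show that both the tropical hypersurface of $f$ and the set $\Hom(\mathbb{T}[x_1,\ldots,x_n]/\bend(f),\mathbb{T})$ correspond, under the natural identification $\Hom(\mathbb{T}[x_1,\ldots,x_n],\mathbb{T})\cong\mathbb{T}^n$, to the same subset of $\mathbb{T}^n$, namely the locus where the maximum of the monomial values of $f$ is attained at least twice.

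First I would invoke the universal property of the free idempotent semiring: evaluation at the variables yields a bijection $\Hom(\mathbb{T}[x_1,\ldots,x_n],\mathbb{T})\xrightarrow{\sim}\mathbb{T}^n$ sending $\phi$ to $(\phi(x_1),\ldots,\phi(x_n))$. Such a $\phi$ factors through the quotient by $\bend(f)$ if and only if it equates the two sides of every generating relation. This uses the general fact that the kernel $\{(s,s')\co \phi(s)=\phi(s')\}$ of a semiring homomorphism is itself a congruence, so whenever it contains a set of relations it contains the entire congruence those relations generate. Hence it suffices to test the finite list of generators $B_1,\ldots,B_n$ of $\bend(f)$.

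Next I would unpack each $B_j$ pointwise. Writing $f=\bigoplus_{i=1}^n c_i\underline{x}^{\alpha_i}$, the evaluation of $B_j$ at a point $\underline{a}\in\mathbb{T}^n$ becomes
\[
\bigoplus_{i=1}^n c_i\underline{a}^{\alpha_i}\;=\;\bigoplus_{i\neq j}c_i\underline{a}^{\alpha_i}.
\]
Since $\oplus$ on $\mathbb{T}$ is the maximum, this holds iff deleting the $j$-th term leaves the maximum unchanged, i.e.\ iff the $j$-th monomial is not the unique strict maximum at $\underline{a}$. All $B_j$ hold simultaneously iff \emph{no} monomial is the unique strict maximum, i.e.\ the maximum is attained by at least two monomials (with the degenerate all-zero case handled automatically). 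Matching this with the description of the tropical hypersurface as the non-linearity locus of the piecewise-linear function $f$, which is exactly where the argmax switches, completes the identification of the two sets.

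The substantive step is the pointwise translation of the bend generators in the paragraph above; the remainder is formal manipulation of universal properties. The only subtlety I would watch is the boundary of $\mathbb{T}^n$ (points where some $a_i=0_{\mathbb{T}}$), where one verifies directly that the condition "max attained at least twice" and the standard convention for membership in the tropical hypersurface agree, so that the equivalence extends beyond the interior stated in the informal recollection.
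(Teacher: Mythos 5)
The paper gives no proof of this proposition---it is imported verbatim from \cite{GG1}---and your argument is correct and is exactly the standard one that source uses: identify $\Hom(\mathbb{T}[x_1,\ldots,x_n],\mathbb{T})$ with $\mathbb{T}^n$, observe that factoring through the quotient only needs to be tested on the generating relations $B_j$, and translate each $B_j$ pointwise into ``the $j$-th monomial is not the unique maximizer.'' Your closing caveat about the boundary of $\mathbb{T}^n$ is apt, since the paper's informal description of the hypersurface as the non-linearity locus is itself only stated away from the boundary.
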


\subsection{Non-archimedean seminorms}\label{sec:seminorms}

A non-archimedean multiplicative seminorm on a ring $R$ is a map $v$ from  $R$  to the tropical
semiring $\mathbb{T}=(\mathbb{R}_{\geq 0}, \oplus = \mathrm{min}, \times)$ that is a homomorphism of
multiplicative monoids, has $v(0)=0$, and satisfies the ultrametric triangle inequality,
\[
	v(a+b) \leq v(a) \oplus v(b).
\]
The map $-\mathrm{log}\co \mathbb{R}_{\geq 0} \to \mathbb{R} \cup  \{\infty\}$  identifies
non-archimedean seminorms (always assumed to be multiplicative) with valuations, and so we will use the terms
interchangeably as we adopt the following generalization.

\begin{definition}\label{def:generalized-valuations}
A non-archimedean seminorm on a ring $R$ is an idempotent semiring $T$ and a map $v\co R \to T$ satisfying
\begin{enumerate}
\item	$v(0)  = 0_T$,
\item	$v(1) = v(-1) = 1_T$,
\item	$v(ab) = v(a) v(b)$,
\item	$v(a+b) \oplus v(a) \oplus v(b) = v(a) \oplus v(b)$.
\end{enumerate}
\end{definition}
\begin{remark}
Condition (4) generalizes the ultrametric triangle inequality, as it says that $v(a+b) \leq v(a) \oplus v(b)$ in
the canonical partial order on $T$.  This definition thus becomes equivalent to the usual definition
of a Krull valuation when the partial order is a total order, such as when $T =
\mathbb{T}_n$.  This condition can also be written more symmetrically as $v(a)\oplus v(b) \oplus
v(c) = v(a) \oplus v(b)$ whenever $a+b+c=0$ in $A$, since $v(c)=v(a+b)$.
\end{remark}

For use later on, we record the following simple observation.  A rank 1 valuation $v\co R \to
\mathbb{T}$ can be extended to a rank 2 valuation on the the ring $R\{\!\{t\}\!\}$ of Puiseux series
(or the subrings of formal Laurent series or polynomials) by the formula
\begin{equation}\label{eq:rank2valuation}
	a_0 t^{n_0} + \cdots \mapsto (e^{n_0}, v(a_0)) \in \mathbb{T}_2.
\end{equation}

\subsection{Tropical differential semirings}
Recall that a differential ring is a ring $R$ equipped with an additive map $d\co R \to R$ that
satisfies the Leibniz relations $d(ab) = (da)b + a(db)$.  This relation for 2-fold products easily
implies that $d$ also satisfies an analogous relation for $n$-fold products.  We will call these
relations the \emph{strict Leibniz relations}, and a map $d$ satisfying them will be called a
\emph{strict differential}.

In this work we propose that differentials on idempotent semirings should be required to satisfy a
somewhat weaker condition than the strict Leibniz relations.  Given an idempotent semiring $S$, an
additive map $d\co S \to S$ is said to be a \emph{tropical differential} if it satisfies the
\emph{tropical Leibniz relations}:  for any pairs of elements  $x,y \in S$ the bend
relations of the expression
\[
d(xy) \oplus xd(y) \oplus yd(x)
\]
hold. Note that we can view the tropical Leibniz relations as the tropicalization of the strict Leibniz relations.

\begin{definition}
A \emph{tropical differential semiring} is an idempotent semiring equipped with a tropical differential. A morphism of tropical differential semirings 
\[
f \co (S, d_S) \rightarrow (T, d_T)
\] 
is a morphism of semirings such that $f(d_S(x)) = d_T(f(x))$ for all $x \in S$.
\end{definition}

\begin{remark}
Just as the strict Leibniz relations for 2-fold products imply the strict Leibniz relations for $n$-fold products, such as
\[
	d(xyz) = (dx)yz + x(dy)z + xy(dz),
\]
it is possible to derive $n$-fold tropical Leibniz relations from the 2-fold tropical Leibniz relations.  I.e., in a tropical differential semiring the bend relations of any expression
\[
d(x_1 \cdots x_n) \oplus \bigoplus_i x_1 \cdots x_{i-1} (dx_i) x_{i+1} \cdots x_n
\]
hold.  However, the tropical Leibniz relations are distinct from the strict Leibniz relations in two important ways.
(1) A strict differential $d$ automatically satisfies the tropical Leibniz relations, but there are
many tropical differentials that are not strict. (2) The differential of a product $xy$ is
constrained by the tropical Leibniz relations and the differentials of $x$ and $y$, but it is not
uniquely determined by them.
\end{remark}

\begin{example}\label{ex:differential-idempotent-semirings}
\begin{enumerate}
\item Let $S$ be an idempotent semiring and let $d$ be either the zero map or the identity; these
are each strict differentials on $S$.

\item Consider the idempotent semiring $\mathbb{B}[[t]]$ of formal power series with coefficients in
$\mathbb{B}$.  This can be identified with the power set of $\mathbb{N}$ by sending a power series to
the set of exponents appearing in it; sum corresponds to union, and the product corresponds to
Minkowski sum.  The map defined by $t^n \mapsto t^{n-1}$ (for any $n \geq 1$ ) is a strict
differential.    If $p$ is a prime, the map defined by
\[
t^n \mapsto \begin{cases} 
t^{n-1} & n \geq 1 \text{ and } p \nmid n  \\
0 & n=0 \text{ or } p \mid n
\end{cases}
\]
is a tropical differential that is not strict.

\item Consider the idempotent semiring of formal tropical power series $\mathbb{T}[[t]]$.  It can be
endowed with a strict differential, $d_0$, defined by
\[
d_0(t^n) = \begin{cases} 
t^{n-1} & n \geq 1\\
0 & n=0.
\end{cases}
\]
\item More generally, if $v\co \mathbb{N} \to \mathbb{T}$ is a non-archimedean seminorm, then there is a non-strict tropical differential $d_v$
defined by,
\begin{equation}\label{eq:nontriv-d-dt}
d_v(t^n) = \begin{cases} 
v(n)t^{n-1} & n \geq 1\\
0 & n=0.
\end{cases}
\end{equation}
Indeed, this satisfies the tropical Leibniz relations since 
\[
	d_v(t^n t^m) \oplus d_v(t^n)t^m \oplus t^n d_v(t^m) = (v(n+m) \oplus v(n) \oplus v(m))t^{n+m-1},
\]
and the coefficient $v(n+m) \oplus v(n) \oplus v(m)$ on the right satisfies the bend relations (the
argument is essentially the same for $k$-fold products with $k>2$).  Note that $v$ could be either a
$p$-adic norm, or a degenerate $p$-adic norm where $v(n) = 0$ if $p$ divides $n$, and
$1$ otherwise.
\end{enumerate}
\end{example}

\begin{remark}
While $\mathbb{B}$ is the initial objects in the category of idempotent semirings, tropical differential semirings do not admit an initial object because the tropical Leibniz rule does not determine $d(1)$.  
\end{remark}

\subsection{Differential congruences}
Let $(S,d)$ be a semiring equipped with an additive map $d\co S \to S$.  A \emph{differential congruence} on $S$ is a congruence $K \subset S\times S$ that is closed under $d$; i.e., if $(a,b) \in K$ then $(da, db) \in K$.  When $K$ is a differential congruence, the map $d$ descends to an additive map $\overline{d}\co S/K \to S/K$, and if $d$ is a tropical differential then $\overline{d}$ is as well.

\begin{proposition}\label{prop:generating-a-differetial-congruence}
If $\{I_\lambda \subset S\times S\}$ is a set of differential congruences, then the congruence generated by them is a differential congruence.
\end{proposition}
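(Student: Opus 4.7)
The plan is to use the standard ``$K_d$'' device for showing that a closure property is preserved by congruence generation. Write $X \defeq \bigcup_\lambda I_\lambda$ and let $K$ denote the congruence on $S$ generated by $X$. Define
\[
K_d \defeq \{(a,b) \in K \mid (da,db) \in K\}.
\]
Showing that $K$ is a differential congruence amounts to proving $K = K_d$, and since $K_d \subseteq K$ tautologically, it suffices to exhibit $K_d$ as a congruence containing $X$: the minimality of $K$ among such congruences will then force $K \subseteq K_d$.

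First I would dispose of the easy checks. The containment $I_\lambda \subseteq K_d$ is the one place the hypothesis enters directly: for $(a,b) \in I_\lambda$, the pair $(da,db)$ lies in $I_\lambda \subseteq K$ because $I_\lambda$ is itself a differential congruence. Reflexivity, symmetry, and transitivity of $K_d$ transfer from those of $K$ by applying the relevant property of $K$ twice (for transitivity, say, given $(a,b),(b,c) \in K_d$, apply transitivity of $K$ to the original pair and again to the image pair $(da,db),(db,dc)$ under $d \times d$). Closure of $K_d$ under addition is immediate from the additivity of $d$ combined with the additive closure of $K$ inside $S \times S$.

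The main obstacle is closure of $K_d$ under multiplication: given $(a,b),(c,e) \in K_d$ one needs $(d(ac),d(be)) \in K$. Here the tropical Leibniz relations, which hold as genuine equalities in $S$, yield
\[
d(ac) + a\,dc + c\,da \;=\; a\,dc + c\,da, \qquad d(be) + b\,de + e\,db \;=\; b\,de + e\,db.
\]
Meanwhile the pair $(a\,dc + c\,da,\, b\,de + e\,db)$ lies in $K$ as a sum of products of pairs already in $K$. Combining these observations places $(d(ac) + a\,dc + c\,da,\, d(be) + b\,de + e\,db)$ in $K$; the remaining work is to pass from this to $(d(ac),d(be))\in K$ itself, leveraging the bend identities together with idempotency to absorb the auxiliary Leibniz terms into the congruence class. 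I expect this absorption step to be the technical crux of the proof, as the tropical Leibniz rule controls $d(ac)$ only up to the bend relation rather than as a polynomial expression in $a,c,da,dc$ as in the strict case.

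With multiplicative closure of $K_d$ in hand, the minimality of $K$ forces $K = K_d$, so that $K$ is closed under $d$ and is therefore a differential congruence.
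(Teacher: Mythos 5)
There is a genuine gap, and it sits exactly where you place it: the ``absorption step'' you defer cannot be carried out, so the multiplicative closure of $K_d$ is not established. The information you have at that point is (i) the equalities $d(ac)\oplus a\,dc\oplus c\,da = a\,dc\oplus c\,da$ and $d(be)\oplus b\,de\oplus e\,db = b\,de\oplus e\,db$ in $S$, and (ii) the membership $(a\,dc\oplus c\,da,\; b\,de\oplus e\,db)\in K$. In an idempotent semiring there is no cancellation, so from $(d(ac)\oplus u,\; d(be)\oplus u')\in K$ with $(u,u')\in K$ one cannot extract $(d(ac),d(be))\in K$; the bend relations only bound $d(ac)$ from above by $a\,dc\oplus c\,da$ in the canonical order. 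More fundamentally, as the paper itself emphasizes, a tropical differential of a product is \emph{constrained} but \emph{not determined} by $a$, $c$, $da$, $dc$: two elements could agree on all four of these data and still have different values of $d(ac)$. Hence no argument that uses only the memberships $(a,b),(c,e),(da,db),(dc,de)\in K$ together with the Leibniz bend relations can ever conclude $(d(ac),d(be))\in K$. This is why the clean ``show $K_d$ is a congruence'' device, which requires product-closure for \emph{arbitrary} pairs in $K_d$, breaks down here even though it works in the classical (strict Leibniz) setting.

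The paper's proof avoids computing $d$ of a product altogether. It works with the generating subsemiring $K_0$ and decomposes a product of generators $(a_1,a_2)\in I_i$, $(b_1,b_2)\in I_j$ as a transitive chain $(a_1b_1,a_2b_1)\in I_i$ followed by $(a_2b_1,a_2b_2)\in I_j$, each link lying in a \emph{single} differential congruence $I_\lambda$. The hypothesis then applies directly to each link: $(d(a_1b_1),d(a_2b_1))\in I_i$ and $(d(a_2b_1),d(a_2b_2))\in I_j$, so the composite lies in $K$ by transitivity. Sums are handled by additivity of $d$, and transitive chains in $K$ by transitivity again. If you want to salvage your write-up, replace the Leibniz computation by this chain decomposition; but note that doing so amounts to an induction over the construction of elements of $K_0$ from the generators rather than a verification that $K_d$ is closed under products of arbitrary pairs.
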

\begin{proof}
Let $K$ denote the congruence generated by the $I_\lambda$.  It is the transitive and symmetric
closure of the subsemiring $K_0$ generated by the $I_\lambda$.

We will first show that $d(K_0) \subset K$.  Suppose that $(a_1,a_2)$ is a relation in some
$I_i$ and $(b_1, b_2)$ is a relation in some $I_j$.  Since $d$ is additive, it certainly sends the
sum $(a_1 + b_1, a_2 + b_2)$ to a relation in $K$.  For the product, we proceed as follows.  In
$I_i$ we have $(a_1 b_1, a_2 b_1)$ and hence $(d(a_1 b_1), d(a_2 b_1))$ since $I_i$ is a
differential congruence. Likewise, in $I_j$ we have the relations $(b_1 a_2, b_2 a_2)$ and hence
$(d(b_1 a_2), d(b_2 a_2))$.  Hence the relation $(d(a_1 b_1), d(a_2 b_2))$ is indeed contained in
the transitive closure $K$.  Since any element of $K_0$ is produced by a finite sequence of sums and
products of elements in the $I_\lambda$, it follows that $d(K_0) \subset K$, as desired.

Now, any relation in $K$ can be decomposed as a finite transitive chain of relations in $K_0$.  Thus
is follows that $d(K) \subset K$.
\end{proof}

\section{Differential polynomials}

The objective of this section is to construct a variant of the Ritt algebra of differential polynomials $R\{x_1,
\ldots, x_n\}$, where the coefficient ring $R$ is replaced by a tropical differential semiring $S$.  

\subsection{Classical Ritt algebras and their universal property}

Recall that, when $R$ is a differential ring, the Ritt algebra of differential polynomials $R\{x_1,
\ldots, x_n\}$ is the commutative $R$-algebra freely generated by the variables $x_i$ and their
formal derivatives $x_i^{(j)} = d^jx_i$.  It carries a differential that sends $x_i^{(j)}$ to
$x_i^{(j+1)}$ and extends as an additive derivation to arbitrary elements in $R\{x_1, \ldots,
x_n\}$.

Ritt algebras are characterized by a universal property: a homomorphism of differential rings
$\varphi\co R\{x_1, \ldots, x_n\} \to R'$ is uniquely determined by the images of the generators
$\varphi(x_i) \in R'$, and there are no restrictions on these elements. It is this universal
property that we wish to extend to the idempotent setting.

\subsection{The failure of the naive definition of tropical Ritt algebra}

While one can trivially replace the coefficient ring $R$ with a differential idempotent semiring
$S$ in the above Ritt algebra construction, we shall now see that it impossible to endow this with a
differential (either strict or tropical) for which the analogous universal property holds.  In fact,
we will show that there is no longer a unique choice of differential, and for any choice of
differential the universal property fails.

Suppose that $S$ is a differential idempotent semiring with tropical differential $d_S$.  The naive
definition of differential polynomials over $S$ suggested in the preceding paragraph will
nevertheless be useful later on, so we give it a name: 
\begin{definition}
Given a tropical differential semiring $S$, the algebra of \emph{basic differential polynomials} over $S$, denoted
\[
S\{x_1, \ldots, x_n\}_{basic}
\]
is the polynomial $S$-algebra on variables $x_i$ and their formal derivatives
$x_i^{(j)}$.
\end{definition}

Let us now attempt to extend the differential of $S$ to this algebra.  Obviously we would like to
send $x_i^{(j)}$ to $x_i^{(j+1)}$, and we would like the map to be additive.  The difficulty is in
choosing how to extend it to arbitrary products.  In contrast to the case of coefficients in
a differential ring, the tropical Leibniz relations allow more freedom in extending a
partially-defined differential to all products; there is not a uniquely determined extension of
$d_S$ to a map $d$ on all of $S\{x_1, \ldots, x_n\}_{\textit{basic}}$ satisfying the tropical Leibniz
relations. In fact, $S\{x_1, \ldots, x_n\}_{\textit{basic}}$ admits many distinct differentials.

\begin{example}\label{}
Suppose $w\co \mathbb{N} \to S$ is a non-archimedean seminorm.  Then we can define a differential $d_w$ on $S\{x_1,
\ldots, x_n\}_{\textit{basic}}$ by the following rule. First, for a pure power $(x^{(j)}_i)^k$ , we define
\[
d_w  ((x^{(j)}_i)^k )= w(k)(x^{(j)}_i)^{k-1}x^{(j+1)}_i
\]
Then, we extend this to monomials $c (x^{(j_1)}_{i_1})^{k_1} \cdots (x^{(j_m)}_{i_m})^{k_m}$ as a strict derivation.  E.g., 
\[
d_w(c x_1^a (x^{(3)}_2)^b) = d_S(c)x_1^a (x^{(3)}_2)^b \oplus cw(a)x_1^{a-1}x_1^{(1)} (x^{(3)}_2)^b \oplus cw(b)(x^{(3)}_2)^{b-1}x^{(4)}_2 x_1^a.
\]
It is straightforward to check that this map $d_w$ does indeed satisfy the tropical Leibniz relations.


One can generalize this example by choosing a distinct non-archimedean seminorm $w_{ij}$ for each generator $x_i^{(j)}$, defining the differential on pure powers by the rule
\[
d (( x_i^{(j)})^k) = w_{ij}(k) (x_i^{(j)})^{k-1} x_i^{(j+1)},
\]
and then extending to arbitrary monomials using the strict Leibniz rule.
\end{example}
The above example shows that there is at least one differential on $S\{x_1, \ldots, x_n\}_{basic}$
for each $n$-tuple of non-archimedean seminorms $\mathbb{Q} \to S$. 


\begin{proposition}
There is no tropical differential on $S\{x_1, \ldots, x_n\}_{\textit{basic}}$ that extends the
tropical differential on $S$ and makes this the free object on $n$ generators.
\end{proposition}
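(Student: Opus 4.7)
The plan is a proof by contradiction. Suppose $d$ is a tropical differential on $A = S\{x_1,\ldots,x_n\}_{\textit{basic}}$ extending $d_S$ and making $(A, d)$ the free tropical differential $S$-algebra on the generators $x_1, \ldots, x_n$.

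I first verify that the universal property forces $d(x_k^{(j)}) = x_k^{(j+1)}$ for all $k$ and $j$. The unique endomorphism of $(A, d)$ sending $x_i \mapsto x_i$ must be the identity; factoring it through the sub-tropical-differential-$S$-algebra $A' \subset A$ generated by $x_1, \ldots, x_n$ shows $A' = A$. Since idempotent semirings admit no subtraction, each formal polynomial generator $x_k^{(j)}$ can enter $A'$ only as the image of an iterated derivative, and matching the degree-one monomials forces (after any needed relabeling of generators) $d^j(x_k) = x_k^{(j)}$ exactly.

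Next, I observe that $S$ admits at least two distinct non-archimedean seminorms $\mathbb{N} \to S$: the trivial one $w_{\mathrm{triv}}$ sending each $n\geq 1$ to $1_S$, and the composite of the parity seminorm $\mathbb{N} \to \mathbb{B}$ (sending even $n$ to $0$ and odd $n$ to $1$) with the unique map $\mathbb{B} \to S$ from the initial idempotent semiring. Applying the construction of the preceding Example to each seminorm produces two tropical differentials $d_1, d_2$ on $A$ that both extend $d_S$ and satisfy $d_i(x_k^{(j)}) = x_k^{(j+1)}$; they are already distinguished at $x_1^2$, where $d_1(x_1^2) = x_1 x_1^{(1)}$ but $d_2(x_1^2) = 0$, distinct monomials in the polynomial semiring $A$ whenever $S$ is non-trivial. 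At least one of $d_1, d_2$ therefore differs from $d$; call it $d'$.

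Applying the universal property to $T = (A, d')$ with $t_i = x_i$ yields a unique morphism $\varphi\co (A, d) \to (A, d')$ of tropical differential $S$-algebras with $\varphi(x_i) = x_i$. The intertwining $\varphi\circ d = d'\circ\varphi$, combined with $d^j(x_i) = (d')^j(x_i) = x_i^{(j)}$ from the first step, forces $\varphi(x_i^{(j)}) = x_i^{(j)}$ inductively in $j$. Since $\varphi$ is then a semiring $S$-algebra map fixing every polynomial generator of $A$, it must coincide with the identity on $A$. But then the intertwining degenerates to $d = d'$, contradicting our choice. The main obstacle is the first step: translating the universal property into the concrete identification $d^j(x_i) = x_i^{(j)}$ without access to subtraction. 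Once this is in place, the rest is a routine comparison of two explicit differentials supplied by the preceding Example.
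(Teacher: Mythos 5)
Your proof is correct and follows essentially the same route as the paper: freeness would force the canonical map fixing the $x_i$ to intertwine $d$ with each differential $d_w$ built from a seminorm $w\co\mathbb{N}\to S$, hence force $d=d_w$ for all $w$, which is impossible since distinct seminorms (e.g.\ the trivial one and a degenerate $2$-adic one) give distinct differentials. You supply more detail than the paper on why the induced endomorphism is the identity, but the argument is the same.
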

\begin{proof}
We use proof by contradiction.  Suppose $d$ is such a differential, so for any non-archimedean seminorm $w\co
\mathbb{N} \to S$ the identity map must be a morphism of differential idempotent semirings
\[
(S\{x_1, \ldots, x_n\}_{\textit{basic}},d) \to (S\{x_1, \ldots, x_n\}_{\textit{basic}}, d_w).
\]
This implies that $d=d_w$ on pure powers.  But this is a contradiction since $d_w \neq d_{w'}$ if
$w$ and $w'$ are distinct seminorms.
\end{proof}

We will show below that the idempotent semiring $S\{x_1, \ldots, x_n\}_{basic}$ can be enlarged to a
tropical differential semiring $S\{x_1, \ldots, x_n\}$ enjoying the universal property that justifies
calling it the tropical Ritt algebra.

\subsection{Differential polynomials over a ring}

As a warm-up to constructing tropical Ritt algebras, we first give an alternative construction of
the classical Ritt algebras in terms of trees.

To avoid ambiguity, let us be precise about our graph theory definitions and conventions.
\begin{definition}
A \emph{forest} is a finite set $V$ (the \emph{vertices}) together with a \emph{parent} map $P\co V
\to V$ such for $n$ large enough $P^n$ sends each vertex to a fixed point of $P$.  The fixed points
of $P$ are the \emph{roots}.  A \emph{tree} is a forest with a single root.  The \emph{valence} of a
vertex $v \in V$ is the cardinality of $P^{-1}(v) \smallsetminus \{v\}$, and the vertices of valence
0 are the \emph{leaves}.
\end{definition}

Consider the set of isomorphism classes of forests with leaves labelled by elements of $R
\cup \{x_1, \ldots, x_n\}$. This set becomes an abelian monoid under the operation of disjoint
union.  Now let $\mathrm{Forest}(R,n)$ denote the quotient monoid obtained by imposing the following
relations:
\begin{enumerate}
\item A tree with a leaf labelled by $0\in R$ is identified with the empty forest.

\item Any leaf labelled by $1$ can be deleted.

\begin{center}
\begin{tikzpicture}
\draw[fill] (0,0) circle [radius=0.1];
\draw (0,0) -- (1,1.5);
\draw [dotted] (1,1.5) -- (1.2,1.8);
\draw (0,0) -- (0,-0.5);
\draw [dotted] (0,-0.5) -- (0,-0.8);

\draw (0,0) -- (-0.5,1);
\draw[fill] (-0.5,1) circle [radius=0.1];
\node [above] at (-0.5,1) {$1$};

\node at (1.5,0.5) {$\sim$};

\draw[fill] (2.5,0) circle [radius=0.1];
\draw (2.5,0) -- (3.5,1.5);
\draw [dotted] (3.5,1.5) -- (3.7,1.8);
\draw (2.5,0) -- (2.5,-0.5);
\draw [dotted] (2.5,-0.5) -- (2.5,-0.8);

\end{tikzpicture}
\end{center}

\item If there are two leaves with labels $a, b \in R$ having the same parent in a tree, then we may
replace these two leaves with a single leaf labelled $ab$.
\begin{center}
\begin{tikzpicture}
\draw[fill] (0,0) circle [radius=0.1];
\draw (0,0) -- (1,1.5);
\draw [dotted] (1,1.5) -- (1.2,1.8);
\draw (0,0) -- (0,-0.5);
\draw [dotted] (0,-0.5) -- (0,-0.8);

\draw (0,0) -- (-0.5,1);
\draw[fill] (-0.5,1) circle [radius=0.1];
\node [above] at (-0.5,1) {$a$};

\draw (0,0) -- (0,1);
\draw[fill] (0,1) circle [radius=0.1];
\node [above] at (0,1) {$b$};

\node at (1.5,0.5) {$\sim$};

\draw[fill] (2.5,0) circle [radius=0.1];
\draw (2.5,0) -- (3.5,1.5);
\draw [dotted] (3.5,1.5) -- (3.7,1.8);
\draw (2.5,0) -- (2.5,-0.5);
\draw [dotted] (2.5,-0.5) -- (2.5,-0.8);

\draw (2.5,0) -- (2.3,1);
\draw[fill] (2.3,1) circle [radius=0.1];
\node [above] at (2.3,1) {$ab$};
\end{tikzpicture}
\end{center}

\item If a leaf with label $r\in R$ has a univalent non-root parent then we may replace the leaf and its
parent by a single leaf with label $dr$.

\begin{center}
\begin{tikzpicture}
\draw[fill] (0,0) circle [radius=0.1];
\draw (0,0) -- (0,-0.5);
\draw [dotted] (0,-0.5) -- (0,-0.8);

\draw (0,0) -- (0,1);
\draw[fill] (0,1) circle [radius=0.1];
\node [above] at (0,1) {$r$};

\node at (0.7,0.5) {$\sim$};

\draw[fill] (1.4,0) circle [radius=0.1];
\draw (1.4,0) -- (1.4,-0.5);
\draw [dotted] (1.4,-0.5) -- (1.4,-0.8);
\node [above] at (1.4,0) {$dr$};
\end{tikzpicture}
\end{center}

\item Given elements $r_1, r_2 \in R$ and a tree $t$, we may form trees $r_1 \cdot t, r_2\cdot t$,
	and $(r_1 + r_2)\cdot t$ by grafting a leaf with label $r_1, r_2$, and $(r_1 + r_2)$,
	respectively, at the root.  We then identify the pair of trees $(r_1 \cdot t) \cup (r_2 \cdot
	t)$ with the single tree $(r_1 + r_2)\cdot t$.
\begin{center}
\begin{tikzpicture}
\definecolor{palegray}{rgb}{0.9,0.9,0.9}
\draw (0,0) -- (0.5,1);
\draw[fill=palegray, dashed] (0.8,1.6) circle [radius=0.67];
\node at (0.8,1.6) {$t$};
\draw (0,0) -- (-0.5,1);
\draw[fill] (-0.5,1) circle [radius=0.1];
\node [above] at (-0.5,1) {$r_1$};
\draw[fill=white] (0,0) circle [radius=0.1];
\node [below] at (0,0) {root};

\draw (2.5,0) -- (3,1);
\draw[fill=palegray, dashed] (3.3,1.6) circle [radius=0.67];
\node at (3.3,1.6) {$t$};
\draw (2.5,0) -- (2,1);
\draw[fill] (2,1) circle [radius=0.1];
\node [above] at (2,1) {$r_2$};
\draw[fill=white] (2.5,0) circle [radius=0.1];
\node [below] at (2.5,0) {root};

\node at (5,0.5) {$\sim$};

\draw (7.5,0) -- (8,1);
\draw[fill=palegray, dashed] (8.3,1.6) circle [radius=0.67];
\node at (8.3,1.6) {$t$};
\draw (7.5,0) -- (7,1);
\draw[fill] (7,1) circle [radius=0.1];
\node [above] at (7,1) {$r_1 + r_2$};
\draw[fill=white] (7.5,0) circle [radius=0.1];
\node [below] at (7.5,0) {root};
\end{tikzpicture}
\end{center}

\end{enumerate}

We think of the set of leaves with a given parent as representing the monomial formed by multiplying
their labels, and we think of internal edges not incident at the root as representing the
differential $d$.  Thus a tree represents a differential monomial, i.e., an expression formed from
the elements of $R$ and the variable $x_1, \ldots, x_n$ by taking products and applying the
differential $d$.

\begin{example}
The expression $r_1 x_1 d^2(x_2)d(r_2 x_1 d(x_2))$ corresponds to the tree:
\begin{center}
\begin{tikzpicture}
\draw (0,0) -- (-1.4,1);
\draw[fill=black] (-1.4,1) circle [radius=0.1];
\node [above] at (-1.4,1) {$r_1$};

\draw (0,0) -- (-0.7,1);
\draw[fill=black] (-0.7,1) circle [radius=0.1];
\node [above] at (-0.7,1) {$x_1$};

\draw (0,0) -- (0,3);
\draw[fill=black] (0,1) circle [radius=0.1];
\draw[fill=black] (0,2) circle [radius=0.1];
\draw[fill=black] (0,3) circle [radius=0.1];
\node [above] at (0,3) {$x_2$};

\draw (0,0) -- (1,1);
\draw [fill=black] (1,1) circle [radius=0.1];
\draw (1,1) -- (0.8,2);
\draw [fill=black] (0.8,2) circle [radius=0.1];
\node [above] at (0.8,2) {$r_2$};
\draw (1,1) -- (1.6,2);
\draw [fill=black] (1.6,2) circle [radius=0.1];
\node [above] at (1.6,2) {$x_1$};
\draw (1,1) -- (2.4,2);
\draw [fill=black] (2.4,2) circle [radius=0.1];
\draw (2.4,2) -- (2.5,3);
\draw [fill=black] (2.5,3) circle [radius=0.1];
\node [above] at (2.5,3) {$x_2$};
\draw [fill=white] (0,0) circle [radius=0.1];

\end{tikzpicture}
\end{center}
\end{example}

\begin{proposition}
The monoid $\mathrm{Forest}(R,n)$ is a commutative $R$-algebra.
\end{proposition}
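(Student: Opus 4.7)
I would define a commutative ring structure on $\mathrm{Forest}(R,n)$ directly and verify that it is compatible with the five given relations. Addition is disjoint union of forests (already supplying the abelian monoid structure). For multiplication, on two trees $T, S$ with roots $v_T, v_S$, set $T \cdot S$ to be the tree obtained by identifying $v_T$ and $v_S$ into a single new root whose children are the union of the children of $v_T$ and $v_S$; the operation extends bilinearly to all forests via $(\bigsqcup_i T_i)\cdot(\bigsqcup_j S_j) = \bigsqcup_{i,j} T_i \cdot S_j$, with the empty forest as absorbing zero. The multiplicative identity will be the one-vertex tree labelled $1 \in R$ (whose leaf/root-vertex is grafted on as a new leaf child during multiplication and then removed by relation (2)).

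The technical heart is showing that multiplication descends to equivalence classes. Relations (1)-(4) are local modifications at particular vertices that are left undisturbed by root-joining, so they survive multiplication. Relation (5) is the relation at the root, and here one uses a short distributivity argument: multiplying $(r_1+r_2)\cdot t \sim (r_1\cdot t)\sqcup(r_2\cdot t)$ by a tree $u$ yields on one side $(r_1+r_2)\cdot(t\cdot u)$ and on the other $(r_1\cdot(t\cdot u))\sqcup(r_2\cdot(t\cdot u))$, which are again equivalent by relation (5) applied to the tree $t\cdot u$.

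The remaining ring axioms are then straightforward: commutativity and associativity follow from the symmetry of root identification, distributivity is built into the bilinear extension, and the unit has been addressed. Additive inverses are produced from the observation $T \sim 1\cdot T$ (relation (2) applied backwards) combined with $(1\cdot T)\sqcup((-1)\cdot T)\sim (1+(-1))\cdot T = 0\cdot T \sim \emptyset$, using relations (5) and (1) in succession. Finally, the $R$-algebra structure is given by the map $r \mapsto $ (single-vertex tree labelled $r$); relations (1), (3), and (5) encode the fact that this is a ring homomorphism sending $0$, $r_1 r_2$, and $r_1 + r_2$ to their correct images.

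The main obstacle is the well-definedness step, where one must reconcile relation (5), a modification at the root, with a multiplication that itself identifies roots. As outlined, these operations commute up to another application of relation (5), so the interaction is well-behaved; the rest is a routine verification.
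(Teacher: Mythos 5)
Your construction matches the paper's: addition is disjoint union, multiplication is gluing of roots extended distributively, and $R$ embeds as the trees with a single labelled leaf at the root; the paper simply declares the verification ``straightforward'' where you carry out the well-definedness check against relation (5) and the derivation of additive inverses from $(1\cdot T)\sqcup((-1)\cdot T)\sim 0\cdot T\sim\emptyset$. The argument is correct and essentially identical in approach, just more detailed.
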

\begin{proof}
The addition operation is disjoint union (the original monoid operation).  The product of two trees
is defined by gluing their roots together, and we extend this operation to products of forests by
the distributive rule.   The elements  $r\in R$ sit inside $\mathrm{Forest}(R,n)$ as the trees
consisting of just a root and a single leaf with label $r$.  It is straightforward to verify that
this is an $R$-algebra.
\end{proof}

There is a map
\[
	d\co \mathrm{Forest}(R,n) \to \mathrm{Forest}(R,n)
\]
that inserts an edge at each root in a forest:
\begin{center}
\begin{tikzpicture}
\draw (0,0) -- (-0.5,1);
\draw [dotted] (-0.5,1) -- (-0.65,1.3);
\draw (0,0) -- (0.5,1);
\draw [dotted] (0.5,1) -- (0.65,1.3);
\draw [dotted] (-0.15,0.7) -- (0.15,0.7); 
\draw [fill=white] (0,0) circle [radius=0.1];

\node at (2,0.5) {$\stackrel{d}{\mapsto}$};

\draw (4,0) -- (3.5,1);
\draw [dotted] (3.5,1) -- (3.35,1.3);
\draw (4,0) -- (4.5,1);
\draw [dotted] (4.5,1) -- (4.65,1.3);
\draw [dotted] (3.85,0.7) -- (4.15,0.7); 
\draw [fill=black] (4,0) circle [radius=0.1];
\draw (4,0) -- (4,-1);
\draw [fill=white] (4,-1) circle [radius=0.1];

\end{tikzpicture}
\end{center}

This map is clearly not a derivation, so let 
\[
L \defeq \{d(st) - sdt - tds \:\: | \:\:  s,t \in \mathrm{Forest}(R,n) \}
\]
and let $\langle L \rangle$ denote the differential ideal generated by $L$, i.e., the smallest ideal
in $\mathrm{Forest}(R,n)$ that is closed under applying $d$.  By construction, the differential $d$
descends to a derivation on the quotient.

\begin{proposition}
There is a natural isomorphism of differential $R$-algebras,
\[
\mathrm{Forest}(R,n)/\langle L \rangle \cong R\{x_1, \ldots, x_n\}.
\]
\end{proposition}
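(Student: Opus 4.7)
The strategy is to identify both sides by the universal property of the classical Ritt algebra. First I verify that $\mathrm{Forest}(R,n)/\langle L \rangle$ is itself a differential $R$-algebra: the preceding proposition supplies the $R$-algebra structure, the edge-insertion map $d$ is additive on the monoid of forests (it applies independently to each tree), and quotienting by the differential ideal $\langle L \rangle$ both preserves the derivation property and forces $d$ to be a derivation modulo $L$.

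Next I construct an explicit ``tree evaluation'' homomorphism $\phi\co \mathrm{Forest}(R,n) \to R\{x_1, \ldots, x_n\}$ defined by recursion: a leaf labelled $a \in R \cup \{x_1,\ldots,x_n\}$ evaluates to $a$; an internal non-root vertex with children $v_1, \ldots, v_k$ evaluates to $d\bigl(\prod_i \phi(v_i)\bigr)$; a tree evaluates to the product over the children of its root; and a forest evaluates to the sum over its component trees. I then check that $\phi$ respects each of the five defining equivalence relations on forests: (1), (2), and (5) are immediate from the $R$-algebra structure, (3) is the multiplicativity of siblings sharing a parent, and (4) uses the fact that a univalent subtree above a leaf labelled $r$ evaluates to $d(r)$. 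I also check that $\phi$ is an $R$-algebra map for the grafting product, intertwines the edge-insertion $d$ on forests with the Ritt differential, and annihilates $L$ (this last point is just the Leibniz rule in $R\{x_1,\ldots,x_n\}$). Since $\phi$ is a differential map and $L \subset \ker \phi$, it follows that $\langle L\rangle \subset \ker \phi$, so $\phi$ descends to a differential $R$-algebra homomorphism $\bar{\phi}\co \mathrm{Forest}(R,n)/\langle L \rangle \to R\{x_1,\ldots,x_n\}$. For the inverse, I invoke the universal property of $R\{x_1,\ldots,x_n\}$ to produce the unique differential $R$-algebra homomorphism $\psi$ sending each $x_i$ to the class of the tree consisting of a root joined to a single leaf labelled $x_i$. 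The composite $\bar\phi \circ \psi$ fixes the generators $x_i$ and is a map of differential $R$-algebras, hence equals the identity by that universal property.

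The main obstacle is showing that the other composite $\psi \circ \bar\phi$ is the identity on $\mathrm{Forest}(R,n)/\langle L\rangle$. This requires a normal-form argument: every forest class can be rewritten, using the Leibniz relations in $\langle L \rangle$ together with the forest relations (1)--(5), as a polynomial expression in $R$ and in the ``chain'' trees representing $d^j(x_i)$. I would prove this by a double induction on tree depth and number of vertices. Given a tree, the $R$-labelled leaves at each internal vertex can be amalgamated via relation (3) and, if they sit above a univalent vertex, collapsed via relation (4), while the Leibniz ideal $\langle L\rangle$ allows $d$ to be pushed past products so that any internal non-root vertex with $x_i$-labelled descendants can be split into a sum of trees each of which has strictly smaller depth--vertex complexity on one side of the Leibniz expansion. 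Iterating reduces every class to a sum of monomial trees, which is the image of a unique element of $R\{x_1,\ldots,x_n\}$ under $\psi$. This normal-form argument is the only step that is not purely formal, the remainder being bookkeeping on forest relations.
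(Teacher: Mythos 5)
Your proof is correct, but it is organized differently from the paper's. The paper disposes of this proposition in one line: it asserts that $\mathrm{Forest}(R,n)/\langle L \rangle$ satisfies the same universal property as the Ritt algebra (morphisms of differential $R$-algebras out of it to any $R'$ biject with $n$-tuples in $R'$), and concludes by uniqueness of objects representing a functor. You instead build an explicit pair of mutually inverse maps: the tree-evaluation homomorphism $\bar\phi$ and the universal-property map $\psi$, with the composite $\psi\circ\bar\phi=\mathrm{id}$ established by a normal-form rewriting argument. The two routes contain the same mathematical content in the end: verifying the universal property directly still requires (a) that evaluation of an $n$-tuple respects the five forest relations and the Leibniz ideal (your $\phi$, with $x_i\mapsto a_i$), and (b) that the one-leaf trees generate the quotient as a differential $R$-algebra, which is exactly your normal-form claim in the guise of the uniqueness clause. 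Your version has the merit of making step (b) visible — the paper's ``straightforward to check'' quietly absorbs it — at the cost of carrying around an explicit inverse. A small simplification available to you: once you know $\bar\phi\circ\psi=\mathrm{id}$, it suffices to prove that $\psi$ is surjective (which is precisely the statement that every forest class is a differential polynomial in the one-leaf trees, i.e.\ your normal form), since then $\psi\circ\bar\phi\circ\psi=\psi$ forces $\psi\circ\bar\phi=\mathrm{id}$; this lets you drop the bookkeeping about $\psi\circ\bar\phi$ acting on arbitrary classes and reduces the crux to a single generation statement proved by your induction on depth and vertex count.
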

\begin{proof}
It is straightforward to check that $\mathrm{Forest}(R,n)/\langle L \rangle$ has exactly the same universal property as the Ritt algebra.
\end{proof}

\subsection{Differential polynomials over a semiring}

We now give a variation on the above construction of the Ritt algebra via trees, where we replace
$R$ with a differential idempotent semiring $S$, and we replace the classical Leibniz relations with
the tropical Leibniz relations.

First consider the $S$-algebra $\mathrm{Forest}(S,n)$ defined exactly as above, and then consider the subset
\[
	L_{trop} = \{ d(t_1 t_2) \oplus t_2 dt_1 \oplus t_1dt_2 \:\: | \:\: t_1,t_2 \in \mathrm{Forest}(S,n)\}.
\]
Let $\langle L_{trop} \rangle$ denote the smallest ideal containing $L_{trop}$ and closed under applying $d$.

\begin{definition}
Given a differential idempotent semiring $S$, we define the \emph{tropical Ritt algebra}
\[
S\{x_1, \ldots, x_n\} \defeq \mathrm{Forest}(S,n) / \bend \langle L_{trop} \rangle,
\]
where $\bend \langle L_{trop} \rangle$ is the congruence of bend relations generated by $\langle L_{trop} \rangle$.
\end{definition}

The tropical Ritt algebra enjoys a universal property in the category of differential idempotent
semirings that is entirely analogous to the universal property of the classical Ritt algebra in the
category of differential rings.

\begin{proposition}\label{prop:univ-property-of-tropical-ritt}
Given a differential $S$-algebra $S'$, there is a bijection 
\[
	\mathrm{Hom}(S\{x_1, \ldots, x_n\}, S') \cong (S')^n
\]
implemented by sending a homomorphism $\varphi$ to the $n$-tuple $(\varphi(x_1), \ldots, \varphi(x_n))$.
\end{proposition}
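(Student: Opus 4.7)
The plan is to construct, for each $n$-tuple $(s_1, \ldots, s_n) \in (S')^n$, a morphism of differential $S$-algebras $\varphi\co S\{x_1, \ldots, x_n\} \to S'$ with $\varphi(x_i) = s_i$, and to verify that this is inverse to the evaluation map. Injectivity of evaluation is essentially immediate: every element of $S\{x_1, \ldots, x_n\}$ is represented by a forest on leaves labelled by $S \cup \{x_1, \ldots, x_n\}$, hence is built from $S$ and the $x_i$ by iterated application of $d$, addition, and multiplication, so any morphism of differential $S$-algebras is determined by its values on the generators $x_i$.

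For the construction of $\varphi$, first define a homomorphism of $S$-algebras $\tilde\varphi\co \mathrm{Forest}(S,n) \to S'$ by the natural interpretation of a tree as a differential polynomial: substitute $s_i$ for each leaf labelled $x_i$, multiply together all sibling leaves sharing a common parent, and apply $d_{S'}$ at every internal (non-root-incident) edge; extend to forests via disjoint union and to products via grafting at roots. The defining relations 1--5 of $\mathrm{Forest}(S,n)$ all hold in any differential $S$-algebra, so this is well-defined. Compatibility $\tilde\varphi\circ d = d_{S'}\circ\tilde\varphi$ is immediate, since inserting an edge at each root of a forest corresponds to applying $d_{S'}$ at the outer level of the polynomial.

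The central step is to show that $\tilde\varphi$ descends to the quotient $S\{x_1, \ldots, x_n\} = \mathrm{Forest}(S,n)/\bend\langle L_{trop}\rangle$, i.e., that $\ker\tilde\varphi \supseteq \bend\langle L_{trop}\rangle$. For any generator $d(t_1 t_2)\oplus t_2\, dt_1\oplus t_1\, dt_2 \in L_{trop}$, its image under $\tilde\varphi$ is the expression $d_{S'}(u_1 u_2)\oplus u_2\, d_{S'}u_1\oplus u_1\, d_{S'}u_2$ with $u_i = \tilde\varphi(t_i)$; the tropical Leibniz relations in $S'$ assert precisely that the bend relations of this three-term sum are satisfied. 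Hence $\ker\tilde\varphi$ contains the bend relations of every generator of $L_{trop}$. Since $\ker\tilde\varphi$ is automatically a differential congruence closed under multiplication by $\mathrm{Forest}(S,n)$, and since bend relations propagate under addition, multiplication, and $d$ of the underlying summands (using distributivity and additivity of $d$), a direct application of Proposition~\ref{prop:generating-a-differetial-congruence} yields $\ker\tilde\varphi \supseteq \bend\langle L_{trop}\rangle$, so $\tilde\varphi$ factors through the required $\varphi$.

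The main obstacle I anticipate is this last step: ensuring that $\bend\langle L_{trop}\rangle$, regarded as a differential congruence, is in fact generated by the bend relations of the generators of $L_{trop}$, rather than requiring bend relations for arbitrary sums inside the differential ideal closure. Clarifying this combinatorial interaction between the bend construction and the differential ideal closure is what makes the verification nontrivial; once it is settled, the remainder of the argument reduces to routine unpacking of the tree interpretation.
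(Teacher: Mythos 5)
Your proposal is correct and follows essentially the same route as the paper's (much terser) proof: build the evaluation homomorphism on $\mathrm{Forest}(S,n)$, check it commutes with $d$, and use the tropical Leibniz relations in $S'$ to descend through $\bend\langle L_{trop}\rangle$. The worry you flag at the end does resolve in the way you sketch --- since $\ker\tilde\varphi$ is a differential congruence, the set of forests whose bend relations lie in it is closed under sums, products with arbitrary elements, and $d$, so containing the bend relations of $L_{trop}$ forces it to contain those of all of $\langle L_{trop}\rangle$ --- though the precise justification is the standard propagation of bend relations under ideal operations (cf.\ \cite{GG1}) rather than Proposition \ref{prop:generating-a-differetial-congruence}, which you cite somewhat loosely.
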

\begin{proof}
By the construction of $\mathrm{Forest}(S,n)$, any $n$-tuple $(a_1, \ldots, a_n) \in (S')^n$ determines a homomorphism of semirings
\[
	\mathrm{Forest}(S,n) \to S'
\]
that commutes with $d$, and since the tropical Leibniz relations hold in $S'$, this homomorphism
descends to the quotient by $\bend \langle L_{trop} \rangle$.  Conversely, a homomorphism provides
an $n$-tuple of elements of $S'$.
\end{proof}

The algebra $S\{x_1, \ldots, x_n\}_{basic}$ of basic differential polynomials that was introduced
earlier sits inside $S\{x_1, \ldots, x_n\}$ as the set of forests where only the root vertices have
valence larger than 1; as the notation suggests.  Obviously the basic subalgebra is not closed under
taking differentials.

\section{Algebraic structures for tropical differential equations}

In the classical world, a differential equation over a differential ring $R$ is an element $f \in R\{x_1,
\ldots, x_n\}$, and a solution to $f$ in an $R$-algebra $A$ is an element $p \in A^n$ such that
$f(p)=0$.  Equivalently, $p$ is a solution if the corresponding homomorphism $p\co R\{x_1, \ldots,
x_n\} \to A$ factors through the quotient $R\{x_1, \ldots, x_n\}/(f)$.

In the tropical world, we have introduced differential idempotent semirings, but these objects on
their own are not sufficient to describe solutions to tropical differential equations.  A tropical
differential equation over a differential idempotent semiring $S$ is an element $f \in S\{x_1,
\ldots, x_n\}$ (where this is the tropical Ritt algebra defined above).  Solutions to this
differential equation will live in $S^n$, but asking that $f$ vanish or tropically vanish at $p\in
S^n$ turns out to be too restrictive.  Following the idea of Grigoriev's framework, $p$ should be
considered a solution to $f$ if $f$ tropically vanishes at $p$ \emph{to leading order} (rather than
to all orders).  This suggests that we must equip our differential idempotent semirings with
something like a non-archimedean seminorm that provides a way of measuring the leading order of elements.  To this
end, we will now define and study the category of tropical pairs.

\subsection{The category of tropical pairs}
A \emph{tropical pair} $\mathbf{S}$ consists of a tropical differential semiring $S_1$, an idempotent semiring $S_0$, and a
homomorphism of idempotent semirings $\pi\co S_1\to S_0$. 

\begin{remark}
We think of $S_1$ as a space of functions, and we think of $S_0$ as a space of leading exponents of the series expansions of these functions.  The map $\pi$, like the usual norm on Puiseux series, sends a function to its leading exponent.
\end{remark}

 In category theoretic terms, if 
\[
F\co\mathrm{DiffSemirings} \to \mathrm{Semirings}
\]
is the forgetful functor from differential idempotent semirings to idempotent semirings, then the
category of pairs is the simply the comma category $(F\downarrow \mathrm{Semirings})$.  Explicitly,
a morphism of pairs $\varphi$ from $(S_1\to S_0)$ to $(T_1
\to T_0)$ is a commutative diagram of idempotent semirings
\[
\begin{tikzcd}
S_1 \arrow[d] \arrow[r, "\varphi_1"] & T_1 \arrow[d] \\
S_0 \arrow[r, "\varphi_0"]           & T_0          
\end{tikzcd}
\]
in which the upper horizontal arrow $\varphi_1$ is a morphism of differential idempotent semirings.
Given a pair $\mathbf{S}$, the category of $\mathbf{S}$-algebras is the category of pairs $\mathbf{T}$ under $\mathbf{S}$.

A pair $(S_1 \stackrel{\pi}{\to} S_0)$ is said to be \emph{reduced} if $S_1$ admits no nontrivial quotient differential idempotent semiring over $S_0$; i.e., it is reduced if there is no nontrivial differential congruence contained in the congruence $\mathrm{ker}(\pi)$.
\begin{example}
\begin{enumerate}
\item For any morphism of idempotent semirings $\nu\co S \to T$ we have a pair 
\[
(S,d=0) \to T,
\] 
and it is reduced if and only if $\nu$ is injective.  If $\nu$ is not injective, then we
can replace $S$ with $\mathrm{im}(\nu)$ to obtain a reduced pair.

\item Consider the homomorphism \[\pi\co \mathbb{B}[[t]] \to \mathbb{T}\] defined by $t^n \mapsto
e^{-n}$, and endow $\mathbb{B}[[t]]$ with the differential $t^n \mapsto t^{n-1}$. This is a pair, and
it is reduced by the following argument.  Suppose $a \neq b \in \mathbb{B}[[t]]$.  If $a = \bigoplus
a_i t^i$ and $b = \bigoplus b_i t^i$, then there exists a minimal $n$ such that $a_n \neq b_n$.  It
then follows that $\pi(d^n a) \neq \pi(d^n b)$, and so $(d^n (a),d^n(b)) \notin
\mathrm{ker}(\pi)$.  Hence any differential congruence containing $(a,b)$ is not contained in
$\mathrm{ker}(\pi)$.


\item Consider 
\[ 
\pi\co \mathbb{T}[[t]]\to \mathbb{T}_2,
\] where the source has any of the differentials from Example
\ref{ex:differential-idempotent-semirings} and the morphism $\pi$ is given by
\[
	(a_{n_0} t^{n_0} \oplus a_{n_1} t^{n_1} \oplus \cdots) \mapsto (e^{-n_0}, a_{n_0}).
\] 
This is a pair, and a modification of the argument above shows that it is also reduced.
\end{enumerate}
\end{example}

We let $\reducedpairs$ denote the full subcategory of reduced pairs.  We will show below in Section
\ref{sec:reduction-functor} that $\reducedpairs$ is a reflective subcategory, and so any pair
$\mathbf{S}=(S_1 \to S_0)$ has a functorial reduction $\mathbf{S}^\mathit{red}=(S_1^{red} \to S_0)$.

Finally we are ready to define the category that will describe tropical differential equations and their solutions.
\begin{definition}
Given a reduced pair $\mathbf{S}$, an $\mathbf{S}$-algebra is a reduced pair under $\mathbf{S}$, and
we let $\mathbf{S}\mhyphen \mathrm{Alg}$ denote the category of $\mathbf{S}$-algebras.
\end{definition}

An important example of an $\mathbf{S}$-algebras comes from the tropical Ritt algebra.  Given a pair
$\mathbf{S}=(S_1\to S_0)$, we first define an idempotent semiring $(S_0 | S_1)\{x_1, \ldots, x_n\}$
by taking the pushout:
\begin{center}
\begin{tikzcd}
S_1 \arrow[r] \arrow[d] & S_1\{x_1, \ldots, x_n\} \arrow[d, dotted]\\
S_0 \arrow[r,dotted] & (S_0 | S_1)\{x_1, \ldots, x_n\}\\
\end{tikzcd}
\end{center}
This pushout can be described explicitly as the algebra of trees $S_1\{x_1, \ldots, x_n\}$  modulo
the congruence generated by the relations that identify $a,b\in S_1 \subset S_1\{x_1, \ldots, x_n\}$
if they have the same image in $S_0$; i.e. leaves incident at the root have labels in $S_0$ rather
than $S_1$. Note that $(S_0 | S_1)\{x_1, \ldots, x_n\}$ contains the
polynomial $S_0$-algebra $S_0\{x_1, \ldots, x_n\}_\mathit{basic}$.

The right vertical arrow in the above diagram gives an $\mathbf{S}$-algebra that we will
denote by $\mathbf{S}\{x_1, \ldots, x_n\}$; these pairs will play the role of tropical Ritt algebras
in the category of $\mathbf{S}$-algebras. 



\begin{proposition}\label{prop:ritt-pair-universal-property}
Let $\mathbf{S}=(S_1\to S_0)$ be a pair and $\mathbf{Y}=(Y_1 \to Y_0)$ an $\mathbf{S}$-algebra. Morphisms of $\mathbf{S}$-algebras $\varphi\co \mathbf{S}\{x_1, \ldots, x_n\} \to \mathbf{Y}$ are in bijection with $Y_1^n$.  The bijection is implemented
by sending a morphism $\varphi=(\varphi_1, \varphi_0)$ to $(\varphi_1(x_1),\ldots \varphi_1(x_n) )$.
\end{proposition}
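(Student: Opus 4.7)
The plan is to reduce the statement to the universal property of the underlying tropical Ritt algebra (Proposition~\ref{prop:univ-property-of-tropical-ritt}) together with the universal property of the pushout used to define $(S_0\mid S_1)\{x_1,\ldots,x_n\}$. The forward direction is transparent: given a morphism $\varphi=(\varphi_1,\varphi_0)$ of pairs under $\mathbf{S}$, the map $\varphi_1 \co S_1\{x_1,\ldots,x_n\} \to Y_1$ is already a morphism of differential $S_1$-algebras, so evaluating at the generators yields an $n$-tuple in $Y_1^n$. What needs work is exhibiting the inverse and showing it is well-defined.

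For the inverse, I would start with an arbitrary $n$-tuple $(a_1,\ldots,a_n) \in Y_1^n$. Since $Y_1$ is a differential $S_1$-algebra via the structure map $\mathbf{S}\to \mathbf{Y}$, Proposition~\ref{prop:univ-property-of-tropical-ritt} produces a unique morphism
\[
\varphi_1 \co S_1\{x_1,\ldots,x_n\} \longrightarrow Y_1
\]
of differential $S_1$-algebras sending $x_i \mapsto a_i$. To construct the bottom component $\varphi_0$, I would build a cocone over the pushout square defining $(S_0\mid S_1)\{x_1,\ldots,x_n\}$, consisting of the composite $S_1\{x_1,\ldots,x_n\} \xrightarrow{\varphi_1} Y_1 \to Y_0$ and the structure map $S_0 \to Y_0$ coming from $\mathbf{Y}$ being an $\mathbf{S}$-algebra. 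The key check is that these two maps agree when restricted to the image of $S_1$ in $S_1\{x_1,\ldots,x_n\}$, which follows from commutativity of the structural square $S_1 \to Y_1 \to Y_0$ versus $S_1 \to S_0 \to Y_0$ encoded in the $\mathbf{S}$-algebra structure on $\mathbf{Y}$. The pushout property then yields a unique $\varphi_0 \co (S_0\mid S_1)\{x_1,\ldots,x_n\} \to Y_0$ making everything commute, and the pair $(\varphi_1,\varphi_0)$ is the desired morphism.

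Finally, I would verify that the two assignments are mutually inverse. In one direction, starting from $(a_1,\ldots,a_n)$, building $\varphi_1$ by the Ritt universal property, and then evaluating at $x_i$ returns $a_i$ by construction. In the other direction, starting from a morphism $\varphi=(\varphi_1,\varphi_0)$, the $n$-tuple $(\varphi_1(x_i))$ reproduces $\varphi_1$ by the uniqueness clause of Proposition~\ref{prop:univ-property-of-tropical-ritt}, and then reproduces $\varphi_0$ by the uniqueness clause of the pushout.

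The only mildly subtle point, which I would single out as the main obstacle, is the verification that the two legs of the cocone used to define $\varphi_0$ genuinely agree on $S_1$; everything else is a formal invocation of universal properties. This compatibility is ultimately built into what it means for $\mathbf{Y}$ to be a pair under $\mathbf{S}$, but it is worth spelling out carefully because it is exactly the ingredient that allows the pushout description of $(S_0\mid S_1)\{x_1,\ldots,x_n\}$ to interact correctly with both the differential structure on $S_1\{x_1,\ldots,x_n\}$ and the reduction to $S_0$-coefficients on leaves at the root.
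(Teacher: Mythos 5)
Your proposal is correct and follows essentially the same route as the paper: construct $\varphi_1$ from the universal property of the tropical Ritt algebra (Proposition~\ref{prop:univ-property-of-tropical-ritt}) and then obtain $\varphi_0$ from the universal property of the pushout defining $(S_0\mid S_1)\{x_1,\ldots,x_n\}$, with uniqueness inherited from that of $\varphi_1$. Your explicit verification that the two cocone legs agree on the image of $S_1$ is exactly the step the paper leaves implicit in the phrase ``by the universal property of pushouts,'' so it is a welcome elaboration rather than a deviation.
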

\begin{proof}
Given $y\in Y_1^n$, it follows from the universal property of the tropical Ritt algebra (Prop.
\ref{prop:univ-property-of-tropical-ritt}) that there is a unique morphism of $S_1$-algebras
\[
	\varphi_1\co  S_1\{x_1,\ldots, x_n\} \to Y_1
\]
sending $x_i$ to $y_i$.  By the universal property of pushouts, this induces an arrow 
\[
\varphi_0\co (S_0|S_1)\{x_1,\ldots, x_n\} \to Y_0 
\]
 such that $(\varphi_1, \varphi_0)$ is a morphism of
$\mathbf{S}$-algebras, and this is unique since $\varphi_1$ is unique.
\end{proof}

\subsection{Tropical differential equations and their solutions}\label{sec:solution-sets}

We start with a reduced pair $\mathbf{S} = (S_1 \stackrel{\pi}{\to} S_0)$.  A tropical differential
equation is simply a differential polynomial $f \in S_0\{x_1, \ldots, x_n\}_\mathit{basic}$.  Let us
write $f = \sum_\alpha f_\alpha x^\alpha$, where $x^\alpha$ runs over the differential monomials in
$f$.  If $x^\alpha$ has any factors of the form $d^n x_i$ for $n>0$ then it does not make sense to
evaluate $x^\alpha$ at an element $c\in S_0^n$ because $S_0$ is not a differential semiring.
However, we can evaluate $x^\alpha$ at an element $C \in S_1^n$ and then push down to $S_0$ via
$\pi$.  Thus we can evaluate $f\in S_0\{x_1, \ldots, x_n\}_\mathit{basic}$ at $C\in S_1^n$ by the
expression
\[
	f(C) = \bigoplus_\alpha f_\alpha \pi(C^\alpha).	
\]

\begin{definition}
The solution set of a differential polynomial $f=\bigoplus_\alpha f_\alpha x^\alpha \in S_0\{x_1, \ldots,
x_n\}_\mathit{basic}$, denoted $\mathrm{Sol}(f)$, is the subset of $S_1^n$ consisting of all
elements $C= (C_1, \ldots, C_n)$ such that the sum
\[
	\bigoplus_\alpha f_\alpha \pi(C^\alpha)
\]
tropically vanishes.
\end{definition}

When the pair $S_1 \to S_0$ is $\mathbb{B}[[ t ]] \stackrel{\pi}{\to} \mathbb{T}$, the above
definition recovers Grigoriev's framework. A subset $W \subset \mathbb{N}$ corresponds to the
boolean formal power series $\bigoplus_{i \in W} t^i$, and Grigoriev's map $\mathrm{Val}_W(j)$ is
precisely $\pi(d^j W)$.

\begin{example}
Consider the pair $\mathbf{S} = \mathbb{T}[[t]] \stackrel{\pi}{\to} \mathbb{T}_2$, where $\mathbb{T}[[t]]$ has the
differential from Example \ref{ex:differential-idempotent-semirings} part (4) corresponding to the
2-adic norm, $d(t^n) = |n|_2t^{n-1}$.  Over this pair we consider solutions to the differential equation
\[
f=(e^{-4},1)x \oplus (1,8)x' \oplus (e^{-1}, 8)x'' \in \mathbb{T}_2\{x\}_\mathit{basic}.
\]	


Let us look for solutions of the form 
\[
	x = 1 \oplus \alpha t \oplus  \beta t^2 \oplus \gamma t^3 \oplus \delta t^4 \oplus \epsilon t^5 \oplus \cdots.
\]
We have 
\begin{align*}
x' &= \alpha  \oplus \frac{\beta}{2}t \oplus \gamma t^2 \oplus \frac{\delta}{4}t^3 \oplus \epsilon t^4 \oplus \cdots\\
x'' &= \frac{\beta }{2} \oplus \frac{\gamma}{2}t \oplus \frac{\delta}{4}t^2 \oplus \frac{\epsilon}{4}t^3 \oplus \cdots.
\end{align*}
If $\alpha \neq 0$ then
\[
	\pi(x) = (1,1), \quad \pi(x') = (1,\alpha), \quad \pi(x'') = (1,\beta/2),
\]
and so evaluating $f$ at $x$ gives the expression
\begin{align*}
	f(x) &= (e^{-4},1)(1,1) \oplus (1,8)(1,\alpha) \oplus (e^{-1}, 8)(1,\beta/2)\\
         &= (e^{-4},1) \oplus (1,8\alpha) \oplus (e^{-1},4\beta).
\end{align*}
The maximum occurs only in the middle term, so there is no solution with $\alpha\neq 0.$

Assuming next that $\alpha=0$ and $\beta \neq 0$, we have
\[
	\pi(x) = (1,1), \quad \pi(x') = (e^{-1},\beta/2), \quad \pi(x'') = (1,\beta/2),
\]
and
\begin{align*}
	f(x) &= (e^{-4},1)(1,1) \oplus (1,8)(e^{-1},\beta/2) \oplus (e^{-1}, 8)(1,\beta/2)\\
         &= (e^{-4},1) \oplus (e^{-1},4\beta) \oplus (e^{-1},4\beta).
\end{align*}
The second and third terms are equal and maximal, so this is a solution for any nonzero value of $\beta$.

If $\alpha=\beta=0$ and $\gamma\neq 0$, then
\[
	\pi(x) = (1,1), \quad \pi(x') = (e^{-2},\gamma), \quad \pi(x'') = (e^{-1},\gamma/2),
\]
and $f(x) = (e^{-4},1) \oplus (e^{-2},8\gamma) \oplus (e^{-2},4\gamma)$.
%
The middle term is the sole maximal term, so this is not a solution.

If $\alpha=\beta=\gamma=0$ and $\delta \neq 0$ then
\[
	\pi(x) = (1,1), \quad \pi(x') = (e^{-3},\delta/4), \quad \pi(x'') = (e^{-2},\delta/4),
\]
and $f(x) = (e^{-4},1) \oplus (e^{-2},2\delta) \oplus (e^{-2},2\delta),$
so we have a solution since the second and third terms are jointly maximal.

The last case we will look at is $\alpha=\beta=\gamma=\delta=0$ and $\epsilon\neq 0$.  Now
\[
	\pi(x) = (1,1), \quad \pi(x') = (e^{-4},\epsilon), \quad \pi(x'') = (e^{-3},\epsilon/4),
\]
and $f(x) = (e^{-4},1) \oplus (e^{-4},8\epsilon) \oplus (e^{-4},2\epsilon).$
If $\epsilon = 1/8$, then the first two terms are jointly maximal and we have a solution but when $\epsilon \neq 1/8$ either the first or second term is the sole maximum.  In this case we see for the first time that the tropical framework here provides additional information about solutions beyond the information contained in Grigoriev's framework.

\end{example}

\subsection{The reduction functor}\label{sec:reduction-functor}

Given a pair $\mathbf{S}= (S_1\stackrel{\pi}{\to} S_0)$, it follows from Proposition
\ref{prop:generating-a-differetial-congruence} that the set of differential congruences contained in
$\mathrm{ker} \: \pi$ has a unique maximal element $R(\pi)$, and hence the pair
$\mathbf{S}^{red} \defeq (S_1/R(\pi) \to S_0)$ is reduced.

\begin{proposition}\label{prop:reduction-universal-property}
The morphism $\mathbf{S} \to \mathbf{S}^{\mathit{red}}$ given by the projection onto the quotient $S_1 \twoheadrightarrow S_1/R(\pi)$ is initial among morphisms from $\mathbf{S}$ to reduced pairs.
\end{proposition}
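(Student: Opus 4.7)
The plan is to establish the universal property: given any morphism of pairs $\varphi = (\varphi_1, \varphi_0) \co \mathbf{S} \to \mathbf{T}$ with $\mathbf{T} = (T_1 \xrightarrow{\pi_T} T_0)$ reduced, I construct a unique factorization $\varphi = \psi \circ p$ through the projection $p \co \mathbf{S} \to \mathbf{S}^{\mathit{red}}$. Uniqueness is immediate: since $p_0 = \id_{S_0}$ we must take $\psi_0 = \varphi_0$, and the surjectivity of $p_1 \co S_1 \twoheadrightarrow S_1/R(\pi)$ pins down $\psi_1$. The entire content is therefore to show that $\varphi_1$ descends through the quotient, equivalently that $R(\pi) \subseteq \ker \varphi_1$.

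The approach is to push $R(\pi)$ forward along $\varphi_1$ and invoke reducedness of $\mathbf{T}$. Let $J$ denote the differential congruence on $T_1$ generated by $(\varphi_1 \times \varphi_1)(R(\pi))$. Two facts about the generating set are immediate from the pair diagram and the defining properties of $R(\pi)$: each generator $(\varphi_1(a), \varphi_1(b))$ lies in $\ker \pi_T$, since $\pi_T \varphi_1(a) = \varphi_0 \pi(a) = \varphi_0 \pi(b) = \pi_T \varphi_1(b)$ using $\pi(a) = \pi(b)$ and the commutativity of the pair square; and the generating set is closed under $d_T$, since $R(\pi)$ is closed under $d_S$ and $\varphi_1$ intertwines $d_S$ with $d_T$. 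Applying Proposition~\ref{prop:generating-a-differetial-congruence} to the family of principal differential congruences attached to the generators confirms that $J$ is indeed a differential congruence. The decisive step is then to verify $J \subseteq \ker \pi_T$; once this is known, reducedness of $\mathbf{T}$ forces $J = \Delta_{T_1}$, so each generator lies on the diagonal and hence $\varphi_1(a) = \varphi_1(b)$ for every $(a,b) \in R(\pi)$. The induced map $\psi_1 \co S_1/R(\pi) \to T_1$, paired with $\psi_0 = \varphi_0$, then gives the required factorization, which one checks is itself a morphism of pairs because the original square commutes and $R(\pi) \subseteq \ker \pi$.

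The main obstacle is precisely the containment $J \subseteq \ker \pi_T$. The subtlety is that $\ker \pi_T$ is only a congruence on $T_1$ and typically not a differential congruence --- reducedness says exactly that the maximal differential congruence inside $\ker \pi_T$ is trivial. The ordinary congruence closure of the generators (sums and products with arbitrary diagonal elements of $T_1 \times T_1$, together with symmetric-transitive closure) stays inside $\ker \pi_T$ without incident, but the $d_T$-closure required to upgrade this to a differential congruence must be argued separately, tracking how the tropical Leibniz rule interacts with the congruence operations. Here one leverages the full differential-congruence structure of $R(\pi)$ upstream: for relations of the form $(c\,\varphi_1(a), c\,\varphi_1(b))$ with $(a,b) \in R(\pi)$ and $c \in T_1$, the product-argument of Proposition~\ref{prop:generating-a-differetial-congruence} provides the two-step transitive chain through the principal differential congruences that controls $d_T$ of the product and keeps it inside $\ker \pi_T$. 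Once this bookkeeping is in place, the rest of the argument is formal.
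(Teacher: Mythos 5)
Your overall strategy coincides with the paper's: push $R(\pi)$ forward along $\varphi_1$, observe that what you get sits inside $\ker \pi_T$, and invoke reducedness of $\mathbf{T}$ to conclude that $\varphi_1$ collapses $R(\pi)$ to the diagonal. The uniqueness half and the two observations about the generating set $(\varphi_1\times\varphi_1)(R(\pi))$ (it lies in $\ker\pi_T$ and is closed under $d_T$) are all correct.

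The gap is in the step you yourself flag as the main obstacle, and the mechanism you propose for it is circular. You want $J\subseteq\ker\pi_T$, where $J$ is the differential congruence generated by the image, and you argue via ``the principal differential congruences attached to the generators.'' But for a non-diagonal generator $(\varphi_1(a),\varphi_1(b))\in\ker\pi_T$, the differential congruence it generates by itself contains pairs $\bigl(d_T(c\,\varphi_1(a)),\,d_T(c\,\varphi_1(b))\bigr)$ for arbitrary $c\in T_1$, and since the tropical Leibniz rule does not determine $d_T$ of a product, there is no reason these land in $\ker\pi_T$. Indeed, reducedness of $\mathbf{T}$ says precisely that a principal differential congruence generated by a non-diagonal pair is \emph{never} contained in $\ker\pi_T$; so assuming these principal congruences lie in $\ker\pi_T$ is equivalent to assuming the generators are already diagonal, which is the conclusion you are trying to reach. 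The paper avoids introducing $J$ at all: it uses the fact that $R(\pi)$ is a full differential congruence \emph{upstream}, so that its image is already closed under sums, products of its own elements, and $d_T$ (because $\varphi_1$ intertwines the differentials), and is contained in $\ker\psi$ via $\psi\circ\varphi_1 = \varphi_0\circ\pi$; reducedness is then applied directly to this image (asserted in the paper to be a differential congruence) with no further $d$-closure that could escape $\ker\psi$. To repair your write-up, replace the appeal to principal differential congruences of the generators by this use of the upstream structure: every product $\varphi_1(s)\varphi_1(a)$ with $s\in S_1$ equals $\varphi_1(sa)$ with $(sa,sb)\in R(\pi)$, so all derivatives of such products stay in the image of $R(\pi)$ and hence in $\ker\pi_T$; the only closure operations still requiring comment are products with diagonal elements of $T_1$ outside the image of $\varphi_1$, which is exactly the point where an argument (rather than a gesture) is needed.
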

\begin{proof}
Suppose $\mathbf{T}=(T_1 \stackrel{\psi}{\to} T_0)$ is a reduced pair and 
\[
\begin{tikzcd}
S_1 \arrow[d, "\pi"] \arrow[r, "\varphi_1"] & T_1 \arrow[d, "\psi"] \\
S_0 \arrow[r, "\varphi_0"]           & T_0          
\end{tikzcd}
\]
is a morphism of pairs.  There are inclusions
\[
 R(\pi) \subset \mathrm{ker} \: \pi \subset \mathrm{ker}\: \pi \circ \varphi_0.	
\]
The map $\varphi_1$ sends  $\mathrm{ker}\:  \varphi_0 \circ \pi$ into $\mathrm{ker}\: \psi$, and the image of a
differential congruence by a homomorphism of tropical differential semirings is again a differential
congruence, so $\varphi_1$ must send $R(\pi)$ to a differential congruence contained in
$\mathrm{ker}\: \psi$.  Since $\mathbf{T}$ is reduced, the only such differential congruence on $T_1$ is the
diagonal, and so $\varphi_1$ factors uniquely through the quotient map $S_1 \to S_1 / R(\pi)$.
\end{proof}

We now show that the above reduction construction exhibits $\reducedpairs$ as a reflective subcategory of $\pairs$.
\begin{proposition}\label{prop:reduction-is-left-adjoint}
Sending $\mathbf{S}$ to $\mathbf{S}^{\mathit{red}}$ defines a functor $\mathscr{R}\co \pairs \to \reducedpairs$, and the quotient
map $\mathbf{S} \to \mathbf{S}^\mathit{red}$ is a natural transformation $\mathrm{Id} \to \mathscr{R}$.  Moreover
$\mathscr{R}$ is left adjoint to the inclusion $\iota\co \pairs \hookrightarrow \reducedpairs$.   
\end{proposition}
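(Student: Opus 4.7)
The plan is to leverage Proposition \ref{prop:reduction-universal-property}, which already establishes the crucial universal property: the quotient map $\eta_{\mathbf{S}}\co \mathbf{S} \to \mathbf{S}^{\mathit{red}}$ is initial among morphisms from $\mathbf{S}$ to reduced pairs. Almost all the content of the present proposition is a formal consequence of this universal property together with standard categorical bookkeeping.

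First I would define $\mathscr{R}$ on morphisms. Given a morphism of pairs $\varphi\co \mathbf{S} \to \mathbf{T}$, consider the composite $\eta_{\mathbf{T}}\circ \varphi\co \mathbf{S} \to \mathbf{T}^{\mathit{red}}$ into a reduced pair. By Proposition \ref{prop:reduction-universal-property} applied to $\mathbf{S}$, there is a unique morphism $\mathscr{R}(\varphi)\co \mathbf{S}^{\mathit{red}} \to \mathbf{T}^{\mathit{red}}$ with $\mathscr{R}(\varphi)\circ \eta_{\mathbf{S}} = \eta_{\mathbf{T}}\circ \varphi$. Functoriality (preservation of identities and composition) then follows immediately from the uniqueness clause of the universal property: both $\mathscr{R}(\psi\circ\varphi)$ and $\mathscr{R}(\psi)\circ\mathscr{R}(\varphi)$ fit into the defining commutative square for $\psi\circ \varphi$, so they must agree; the identity case is analogous. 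The commutative square witnessing $\mathscr{R}(\varphi)\circ\eta_{\mathbf{S}} = \eta_{\mathbf{T}}\circ \varphi$ is precisely naturality of $\eta\co \mathrm{Id}\Rightarrow \iota\circ \mathscr{R}$.

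For the adjunction, I would exhibit a natural bijection
\[
\mathrm{Hom}_{\reducedpairs}(\mathscr{R}(\mathbf{S}),\mathbf{T}) \;\cong\; \mathrm{Hom}_{\pairs}(\mathbf{S},\iota(\mathbf{T}))
\]
for any pair $\mathbf{S}$ and reduced pair $\mathbf{T}$. The map from left to right is precomposition with $\eta_{\mathbf{S}}$. It is surjective because Proposition \ref{prop:reduction-universal-property} says that every morphism $\mathbf{S}\to \mathbf{T}$ factors through $\eta_{\mathbf{S}}$ when $\mathbf{T}$ is reduced, and it is injective because that proposition asserts the factorization is unique. Naturality in both variables is straightforward: for $\mathbf{T}\to \mathbf{T}'$ in $\reducedpairs$ we just postcompose on both sides, and for $\mathbf{S}'\to \mathbf{S}$ in $\pairs$ we precompose after using the definition of $\mathscr{R}$ on morphisms. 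This gives a hom-set bijection that is natural in both arguments, which is the definition of an adjunction $\mathscr{R}\dashv \iota$ with $\eta$ as unit.

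There is essentially no hard step here, since Proposition \ref{prop:reduction-universal-property} has already done the substantive work of identifying the correct universal morphism; the remaining task is the categorical package of turning an object-level universal property into a left-adjoint functor. The only place to be careful is in confirming that $\mathscr{R}(\varphi)$ is genuinely a morphism of reduced pairs, i.e.\ that its two components assemble into a commuting square of semiring maps with the $S_1$-level map respecting the induced differential; but this is automatic from the fact that $\eta_{\mathbf{T}}\circ \varphi$ is already a morphism in $\pairs$ and that the factorization produced by Proposition \ref{prop:reduction-universal-property} is itself a morphism of pairs.
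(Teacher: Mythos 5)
Your proposal is correct and follows essentially the same route as the paper: everything is derived formally from the universal property in Proposition \ref{prop:reduction-universal-property}, with $\mathscr{R}$ defined on morphisms by the unique factorization and functoriality extracted from uniqueness. The only cosmetic difference is that you verify the adjunction via the hom-set bijection (surjectivity from existence of the factorization, injectivity from its uniqueness), whereas the paper packages the same content as a unit--counit pair with the counit being the identity on reduced pairs; these are equivalent formulations.
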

\begin{proof}
Suppose $f\co\mathbf{S} \to \mathbf{T}$ is a morphism of pairs and consider the composition
$\mathbf{S} \to \mathbf{T} \to \mathbf{T}^\mathit{red}$.  By Proposition
\ref{prop:reduction-universal-property}, there is a unique factorization $\mathbf{S} \to
\mathbf{S}^\mathit{red} \to \mathbf{T}^\mathit{red}$, and hence we obtain a morphism
$\mathscr{R}(f)\co \mathbf{S}^\mathit{red} \to \mathbf{T}^\mathit{red}$.  It is straightforward the
check that this respects compositions:  $\mathscr{R}(f\circ g) = \mathscr{R}(f) \circ
\mathscr{R}(g)$.  Hence $\mathscr{R}$ is a functor.

It is a straightforward verification that the quotient map $\mathbf{S} \to \mathbf{S}^\mathit{red}$
defines a natural transformation from the identity on $\pairs$ to $\iota \circ \mathscr{R}$. Clearly
if $\mathbf{S}$ is reduced, then $\mathbf{S}^\mathit{red} = \mathbf{S}$, and there is trivially a
natural transformation from $\mathscr{R}\circ \iota$ to the identity on $\reducedpairs$.  It is now
elementary to check that these two natural transformations give the claimed adjunction.
\end{proof}

As a consequence of reduction being a left adjoint functor, it commutes with colimits.

\subsection{Quotients of pairs}\label{sec:quotients}
Let $\mathbf{S}=(S_1 \to S_0)$ be a pair.  A quotient of $\mathbf{S}$ is a morphism of pairs
\[
\begin{tikzcd}
S_1 \arrow[d,"\pi"] \arrow[r, "f_1"] & T_1 \arrow[d] \\
S_0 \arrow[r, "f_0"]           & T_0          
\end{tikzcd}
\]
such that both $f_1$ and $f_0$ are surjective.  The kernel of $f_1$ is a differential congruence
$\ker f_1$ on $S_1$, the kernel of $f_0$ is a congruence $\ker f_0$ on $S_0$, and $\pi$ sends $\ker
f_1$ into $\ker f_0$.  Conversely, a pair of congruences $(K_1 \subset S_1 \times S_1, K_0 \subset
S_0 \times S_0)$ satisfying $\pi(K_1) \subset K_0$ defines a quotient of $\mathbf{S}$.

We now describe an important class of quotients.  Suppose we are given a pair $\mathbf{S}=(S_1 \to
S_0)$ and a congruence $K$ on the polynomial semiring $S_0\{x_1, \ldots, x_n\}_{basic}$.  By a
slight abuse of notation, let $(S_0|S_1)\{x_1, \ldots, x_n\}/K$ denote the induced quotient, and then let
\[
	\mathbf{S}\{x_1, \ldots, x_n\} \sslash K 
\]
denote the reduction of the pair $S_1\{x_1, \ldots, x_n\} \to (S_0|S_1)\{x_1, \ldots, x_n\}/K$. Quotients
of this form will be used when we define the tropicalization of a system of differential equations
in Section \ref{sec:tropicalization}.

\begin{proposition}\label{prop:quotient-property}
Let $\mathbf{T} = (T_1 \stackrel{\pi}{\to} T_0)$ be a reduced $\mathbf{S}$-algebra and $K$ a congruence on
$S_0\{x_1, \ldots, x_n\}_\mathit{basic}$.  Morphisms of $\mathbf{S}$-algebras
\[
	\mathbf{S}\{x_1, \ldots, x_n\} \sslash K  \to \mathbf{T}
\]
correspond bijectively with $n$-tuples $y_1, \ldots, y_n \in T_1$ such that the elements $\pi(d^jy_i) \in T_0$ define an
$S_0$-algebra homomorphism $S_0\{x_1, \ldots, x_n\}_\mathit{basic} / K \to T_0$.
\end{proposition}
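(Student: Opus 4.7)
The plan is to chain three universal properties: the freeness of the tropical Ritt pair from Proposition \ref{prop:ritt-pair-universal-property}, the quotient-by-$K$ construction, and the reflection into reduced pairs from Proposition \ref{prop:reduction-universal-property}. Since $\mathbf{S}\{x_1,\ldots,x_n\} \sslash K$ is by construction the reduction of the pair $\mathbf{Q} := \bigl(S_1\{x_1,\ldots,x_n\} \to (S_0|S_1)\{x_1,\ldots,x_n\}/K\bigr)$, and $\mathbf{T}$ is already reduced, these universal properties should line up cleanly.

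Starting from a tuple $(y_1,\ldots,y_n) \in T_1^n$, Proposition \ref{prop:ritt-pair-universal-property} yields a unique $\mathbf{S}$-algebra morphism $\varphi = (\varphi_1,\varphi_0)\co \mathbf{S}\{x_1,\ldots,x_n\} \to \mathbf{T}$ with $\varphi_1(x_i) = y_i$. Commutativity of the pair-square and the fact that $\varphi_1$ commutes with $d$ force $\varphi_0(d^j x_i) = \pi(d^j y_i)$ for each $i$ and $j$, so the restriction of $\varphi_0$ to the basic sub-semiring $S_0\{x_1,\ldots,x_n\}_\mathit{basic}$ is precisely the $S_0$-algebra homomorphism determined by the tuple $\bigl(\pi(d^j y_i)\bigr)$.

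This $\varphi$ descends to a morphism out of the (a priori un-reduced) quotient pair $\mathbf{Q}$ if and only if $\varphi_0$ annihilates the congruence generated by $K$ inside $(S_0|S_1)\{x_1,\ldots,x_n\}$. Since the set of pairs killed by the semiring map $\varphi_0$ is itself a congruence, and $K$ is generated by pairs lying in the sub-semiring $S_0\{x_1,\ldots,x_n\}_\mathit{basic}$, this condition is equivalent to requiring that $\varphi_0|_{S_0\{x_1,\ldots,x_n\}_\mathit{basic}}$ factor through $K$ --- equivalently, that the tuple $\bigl(\pi(d^j y_i)\bigr)$ determine a well-defined $S_0$-algebra homomorphism $S_0\{x_1,\ldots,x_n\}_\mathit{basic}/K \to T_0$. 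Finally, since $\mathbf{T}$ is reduced, Proposition \ref{prop:reduction-universal-property} (or equivalently the adjunction of Proposition \ref{prop:reduction-is-left-adjoint}) identifies morphisms $\mathbf{Q} \to \mathbf{T}$ bijectively with morphisms $\mathbf{S}\{x_1,\ldots,x_n\} \sslash K = \mathbf{Q}^{\mathit{red}} \to \mathbf{T}$. Composing the three bijections gives the proposition. The only mildly technical point is the semiring-algebra observation in the middle step --- that a homomorphism kills a congruence iff it kills a generating set of relations --- which is a standard fact about the lattice of congruences rather than a genuine obstacle.
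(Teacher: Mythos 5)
Your proposal is correct and follows essentially the same route as the paper's proof: invoke Proposition \ref{prop:ritt-pair-universal-property} to get the morphism from the tuple, observe that $\varphi_0(d^j x_i)=\pi(d^j y_i)$ so the hypothesis on the tuple is exactly what lets $\varphi_0$ descend through $K$, and then use reducedness of $\mathbf{T}$ together with the reflection adjunction to pass to the reduction. You are somewhat more explicit than the paper about the intermediate step (that killing the generators of a congruence suffices, and that only the bottom of the pair changes in the quotient), but this is a matter of detail rather than a different argument.
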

\begin{proof}
A morphism $(f_1, f_0)\co \mathbf{S}\{x_1, \ldots, x_n\} \sslash K  \to \mathbf{T}$ determines
elements $y_i = f_1(x_i)$ that are immediately seen to satisfy the stated condition.

Going in the other direction, suppose $y_i \in T_1$ are elements satisfying the above condition. By Proposition
\ref{prop:ritt-pair-universal-property},  there is a uniquely determined morphism  $(f_1, f_0):
\mathbf{S}\{x_1, \ldots, x_n\}  \to \mathbf{T}$ with $f_1(x_i) = y_i$. The images $\pi(d^jy_i)\in
T_0$ are equal to the elements $f_0(d^jx_i)$ coming from $S_0\{x_1, \ldots, x_n\}_\mathit{basic}$
(recall that this is the polynomial algebra on the symbols $d^jx_i$), and these define a semiring
homomorphism
\[
S_0\{x_1, \ldots, x_n\}_\mathit{basic}  \to T_0.	
\]
that descends to the quotient by $K$, and since $\mathbf{T}$ is reduced, this descends to the
reduction by Proposition \ref{prop:reduction-is-left-adjoint}.
\end{proof}

\subsection{Solutions as morphisms}

It follows directly from Proposition \ref{prop:quotient-property} and the definition of solutions to tropical differential equations that $\mathbf{S}$-algebra morphisms
\[
	\mathbf{S}\{x_1, \ldots, x_n\} \sslash \bend(E) \to \mathbf{S}
\]
are in bijection with the solution set $\mathrm{Sol}(E)$. In fact, we have
\begin{proposition}
The functor $\mathbf{S}\mhyphen \mathrm{alg} \to \mathrm{Sets}$ sending an $\mathbf{S}$-algebra $\mathbf{S} \stackrel{u}{\to} \mathbf{T}$ to $\mathrm{Sol}(u_* E)$ is corepresented by  $\mathbf{S}\{x_1, \ldots, x_n\} \sslash \bend(E)$.
\end{proposition}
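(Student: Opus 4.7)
The plan is to combine Proposition~\ref{prop:quotient-property} with the defining characterization of bend relations in terms of tropical vanishing, and then verify functoriality.

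First, I would apply Proposition~\ref{prop:quotient-property} with the congruence $K = \bend(E)$ on $S_0\{x_1,\ldots,x_n\}_\mathit{basic}$. Writing $\mathbf{T} = (T_1 \stackrel{\pi_T}{\to} T_0)$ and letting $u_0 \co S_0 \to T_0$ be the semiring structure map of the $\mathbf{S}$-algebra, this proposition identifies the set of $\mathbf{S}$-algebra morphisms $\mathbf{S}\{x_1,\ldots,x_n\}\sslash \bend(E) \to \mathbf{T}$ with the set of tuples $(y_1,\ldots,y_n)\in T_1^n$ such that the $u_0$-linear assignment $d^j x_i \mapsto \pi_T(d^j y_i)$ descends from $S_0\{x_1,\ldots,x_n\}_\mathit{basic}$ to a homomorphism on the quotient by $\bend(E)$.

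Second, I would reinterpret this descent condition as a solution condition. Writing each $f \in E$ as $f = \bigoplus_\alpha f_\alpha x^\alpha$, the congruence $\bend(E)$ is generated by the relations $\bigoplus_\alpha f_\alpha x^\alpha \sim \bigoplus_{\alpha \neq \beta} f_\alpha x^\alpha$ for each monomial index $\beta$ appearing in $f$. Thus the extended assignment factors through $\bend(E)$ exactly when, for every $f\in E$ and every such $\beta$,
\[
\bigoplus_\alpha u_0(f_\alpha)\pi_T(y^\alpha) = \bigoplus_{\alpha\neq\beta} u_0(f_\alpha)\pi_T(y^\alpha)\quad\text{in }T_0,
\]
and this is precisely the statement that $\bigoplus_\alpha u_0(f_\alpha)\pi_T(y^\alpha)$ tropically vanishes in $T_0$. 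Since $u_*f = \bigoplus_\alpha u_0(f_\alpha)x^\alpha$, this is exactly the condition that $(y_1,\ldots,y_n)\in\mathrm{Sol}(u_*f)$, and requiring it for all $f\in E$ gives membership in $\mathrm{Sol}(u_*E)$.

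Finally, for functoriality: given a morphism $v \co \mathbf{T}\to\mathbf{T}'$ of $\mathbf{S}$-algebras, postcomposing a tuple $(y_1,\ldots,y_n)$ with $v_1 \co T_1 \to T_1'$ sends solutions of $u_*E$ to solutions of $(v\circ u)_*E$, because $v_0\circ \pi_T = \pi_{T'}\circ v_1$ and $v_0$ is a semiring homomorphism (so it preserves both tropical sums and vanishing). This agrees with postcomposition by $v$ on the $\Hom$-side of the bijection, producing the required natural isomorphism of set-valued functors. I do not expect a serious obstacle here: the heart of the argument is the equivalence between the bend congruence and tropical vanishing, which is built into the construction of $\bend$, and all of the universal-property work has already been absorbed into Proposition~\ref{prop:quotient-property}.
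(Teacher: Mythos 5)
Your proposal is correct and follows essentially the same route as the paper, which states that the result ``follows directly from Proposition~\ref{prop:quotient-property} and the definition of solutions'' and omits the details you have supplied. Your unpacking of the descent condition through $\bend(E)$ as tropical vanishing of $\bigoplus_\alpha u_0(f_\alpha)\pi_T(y^\alpha)$, together with the naturality check, is exactly the intended argument.
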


\subsection{Colimits of pairs}
In this section we show that colimits in the category of pairs can be computed by
computing the colimits of the top and bottom individually.  In order for this to be
useful, it is helpful to note the following.

\begin{proposition}
The categories of idempotent semirings and differential idempotent semirings are cocomplete.
\end{proposition}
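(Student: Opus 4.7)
The plan is to recognize both categories as finitary algebraic varieties and invoke the classical cocompleteness of such categories. Idempotent semirings are the models of the equational theory with operations $0, 1, \oplus, \otimes$ subject to the semiring axioms together with $a \oplus a = a$; tropical differential idempotent semirings additionally carry a unary operation $d$ subject to additivity and the tropical Leibniz bend relations
\begin{equation*}
d(xy) \oplus x d(y) \oplus y d(x) \;=\; d(xy) \oplus x d(y) \;=\; d(xy) \oplus y d(x) \;=\; x d(y) \oplus y d(x).
\end{equation*}
Since these are all finitary equational axioms, both categories are monadic over Sets and hence cocomplete.

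For readers preferring an explicit verification, it suffices to construct small coproducts and coequalizers. Coequalizers of a pair $f, g \co S \to T$ are formed as the quotient $T / K$, where $K$ is the congruence (in the idempotent semiring case) or the differential congruence (in the differential case) generated by $\{(f(s), g(s)) : s \in S\}$; existence of such a generated differential congruence is exactly the content of Proposition \ref{prop:generating-a-differetial-congruence}, and the induced map $\overline{d}$ on $T/K$ automatically satisfies the tropical Leibniz relations since these are preserved under quotients by differential congruences. Coproducts arise as quotients of free objects: the free idempotent semiring on a set is a standard construction, and the free tropical differential semiring on a set $X$ can be built by freely adjoining formal derivatives $d^n x$ for $x \in X$ to a polynomial semiring and then quotienting by the bend congruence generated by the tropical Leibniz relations, paralleling the tropical Ritt algebra construction of Section 3. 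A binary coproduct $S \sqcup S'$ is then obtained as a suitable quotient of the free object on the disjoint union of the underlying sets.

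The main obstacle is not conceptual but bookkeeping: one must check in each case that the resulting quotients are well-defined tropical differential semirings and satisfy the expected universal properties. The only non-standard ingredient is Proposition \ref{prop:generating-a-differetial-congruence}, which guarantees that the differential congruences needed for coequalizers and for the free construction actually exist; once this is invoked, the remaining verifications follow the standard pattern for categories of algebras over a finitary monad.
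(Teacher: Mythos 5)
Your argument is correct, and its primary route is genuinely different from the paper's. The paper simply asserts that coproducts and coequalizers exist, constructing the coproduct of differential semirings as the coproduct $\bigoplus_i S_i$ of underlying semirings endowed with an induced differential, and leaves the verifications as ``elementary.'' You instead observe that both categories are categories of models of finitary equational theories --- the key point being that the tropical Leibniz condition, being a conjunction of \emph{bend relations}, is itself a family of universally quantified equations in the signature $(0,1,\oplus,\otimes,d)$ --- and then invoke the classical cocompleteness of finitary varieties. This buys a cleaner and more robust proof: in particular it sidesteps a real subtlety in the paper's sketch, namely that the tropical Leibniz relations do \emph{not} determine $d$ on mixed products $s\otimes s'$ of elements coming from different factors (exactly the phenomenon that forces the forest construction in Section 3), so it is not obvious that the coproduct of underlying semirings carries a canonical induced differential; your recipe (quotient of the free object on the disjoint union of underlying sets) is the correct general construction. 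Your explicit coequalizer argument via Proposition \ref{prop:generating-a-differetial-congruence} matches the paper's intent. Two small caveats. First, your parenthetical description of the free tropical differential semiring as ``a polynomial semiring with formal derivatives adjoined, modulo bend Leibniz relations'' is slightly too naive for the reason just mentioned --- one needs the tree/forest construction, as you implicitly acknowledge by citing Section 3, and in any case the variety argument already guarantees free objects exist. Second, your argument proves strictly more than the proposition: a finitary variety has an initial object (the free algebra on no generators, here generated by $1, d1, d^21,\ldots$), which is in tension with the paper's earlier Remark asserting that tropical differential semirings admit no initial object. The tension is not a flaw in your proof --- cocompleteness already forces an initial object as the colimit of the empty diagram --- but you should be aware that your argument exposes that Remark as needing correction (what fails is only that $\mathbb{B}$ with a chosen differential is not initial).
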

\begin{proof}
The category of idempotent semirings is cocomplete for the same reason as the category of rings; one
can easily check that arbitrary coproducts and coequalizers exist.  For differential idempotent
semirings, one must only verify that tropical differentials $d_i$ on $S_i$ induce a tropical
differential on the coproduct $\bigoplus_i S_i$, and likewise for coequalizers.  Both of these
verifications are elementary and straightforward.
\end{proof}

\begin{proposition}\label{prop:colim-components}
The forgetful functors
\begin{center}
\begin{tikzcd}
 & \pairs \arrow[dr, "\pi_t"] \arrow[dl, "\pi_{b}"']& \\
 \semirings & & \Dsemirings
\end{tikzcd}
\end{center}
commute with colimits, and $\pi_t$ also commutes with limits.
\end{proposition}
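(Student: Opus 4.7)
The plan is to reduce both assertions to a single observation: the forgetful functor $F \co \Dsemirings \to \semirings$ preserves both limits and colimits, which is essentially the content of the preceding proposition.

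For colimits, the preceding proposition shows that the colimit of a diagram $(S_i, d_i)$ in $\Dsemirings$ is built by taking the colimit $C$ of underlying semirings and equipping $C$ with the induced tropical differential; hence $F$ commutes with colimits. An analogous check handles limits: on a product of differential semirings the componentwise $d$ is again a tropical differential (the bend relations in a product are verified componentwise), and on an equalizer the differential restricts since morphisms of differential semirings intertwine the differentials.

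Given these preservation properties of $F$, I would proceed by the standard comma-category argument. For a diagram $D \co I \to \pairs$ sending $i$ to $(S_1^i \stackrel{\pi_i}{\to} S_0^i)$, set $T_1 \defeq \colim_i S_1^i$ in $\Dsemirings$ and $T_0 \defeq \colim_i S_0^i$ in $\semirings$. Since $F$ preserves colimits we have $F T_1 = \colim_i F S_1^i$, so the morphisms $\pi_i$ factor through a unique map $\pi \co FT_1 \to T_0$, producing a pair $(T_1 \stackrel{\pi}{\to} T_0)$. I would then verify the universal property of this pair as $\colim D$ in $\pairs$: a cocone under $D$ with apex a pair $(Y_1 \to Y_0)$ amounts to compatible families of morphisms on top and bottom, which yield unique maps $T_1 \to Y_1$ and $T_0 \to Y_0$; commutativity of the resulting square reduces, via the universal property of $FT_1$, to checking commutativity of each component square, which holds by assumption. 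Applying $\pi_t$ and $\pi_b$ to the colimit so constructed gives $T_1$ and $T_0$ respectively, proving both preserve colimits.

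For the limit claim about $\pi_t$, the same pattern applies: since $F$ preserves limits, the limit of $D$ in $\pairs$ is computed as $(\lim_i S_1^i \to \lim_i S_0^i)$ with the limit map on $F$-images, and applying $\pi_t$ recovers $\lim_i S_1^i$. The proof is formal once the preservation properties of $F$ are in place; the only real work is the routine diagram chase establishing the universal property in $\pairs$, and I do not anticipate any genuine obstacle.
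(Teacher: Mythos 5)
Your proof is correct, but it takes a genuinely different route from the paper's. The paper argues entirely by adjunctions: it exhibits an explicit left adjoint $L_t\co S \mapsto (S \stackrel{\mathrm{id}}{\to} S)$ and right adjoint $R_t\co S \mapsto (S \to *)$ to $\pi_t$, and constructs a right adjoint $R_b$ to $\pi_b$ as a colimit over the slice category $\pairs/T$; preservation of colimits and limits is then purely formal. You instead run the standard comma-category argument, constructing (co)limits in $\pairs$ componentwise from (co)limits in $\Dsemirings$ and $\semirings$, using that the forgetful functor $F\co\Dsemirings\to\semirings$ preserves colimits (which is indeed the content of the preceding proposition). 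Your route buys an explicit componentwise description of colimits in $\pairs$ --- which is what the paper actually uses later, e.g.\ in the proof of the universal tropicalization theorem --- and it sidesteps the paper's somewhat delicate construction of $R_b$. The paper's route buys the fact that one never needs to know that $\Dsemirings$ has limits: a functor with a left adjoint preserves whatever limits exist, whereas you must (and do, briefly) construct products and equalizers of differential semirings. One small inaccuracy: for the limit claim, ``$F$ preserves limits'' is not the hypothesis doing the work. The componentwise limit $(\lim_i S_1^i \to \lim_i S_0^i)$ exists because the structure map $F(\lim_i S_1^i)\to\lim_i S_0^i$ is induced by the cone $F(\lim_i S_1^i)\to F S_1^i\to S_0^i$; no preservation property of $F$ is needed (it is the functor on the bottom of the comma category, here the identity, that must preserve limits). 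This misattribution is harmless and does not affect the correctness of your argument.
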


\begin{proof}
It suffices to show that $\pi_b$ admits a right adjoint and $\pi_t$ admits both a left and a right adjoint. 

We start with $\pi_t$.  Let 
\[
L_t\co \Dsemirings \to \pairs
\]
be the functor sending a differential idempotent
semiring $S$ to the pair $S\stackrel{id}{\to} S$, and let $R_t$ be the functor sending $S$ to the
pair $S\to *$, where $*$ denotes the trivial idempotent semiring consisting of a single element. Given a pair
$A \stackrel{p}{\to} B$, a morphism of differential idempotent semirings $f\co S \to A$ uniquely determines, and
is uniquely determined by, a morphism of pairs
\begin{center}
\begin{tikzcd}
S \arrow[d,"id"'] \arrow[r,"f"] & A \arrow[d,"p"] \\
S \arrow[r,"p\circ f"] & B
\end{tikzcd}
\end{center}
that is evidently natural in the semiring $S$ and the pair $A \to B$. Thus $L_t$ is left adjoint to $\pi_t$. 
For $R_t$, observe that a morphism of differential semirings $f\co A\to S$ is equivalent to a morphism of pairs:
\begin{center}
\begin{tikzcd}
A \arrow[d,"p"'] \arrow[r,"f"] & S \arrow[d] \\
B \arrow[r] & *
\end{tikzcd}
\end{center}

For $\pi_b$, we will construct a right adjoint $R_b$.  Consider the subcategory
\[\pairs/T \subset \pairs\] of pairs $S \to T$, where a morphism is a morphism of pairs that is the identity on $T$.
The colimit $\colim_{\pairs / T} \pi_t$ comes with a natural semiring homomorphism to $T$, and this defines a pair $R_b(T)$.  It is straightforward to verify that $R_b(T)$ is functorial in $T$.
A morphism of pairs 
\begin{center}
\begin{tikzcd}
A \arrow[d,"p"'] \arrow[r,"f"] & \colim_{\pairs / T} \pi_t \arrow[d] \\
B \arrow[r] & T
\end{tikzcd}
\end{center}
clearly provides a semiring homomorphism $B \to T$.  Conversely, given a semiring homomorphism $B \to T$, the composition $A\to B \to T$ is an object of $\pairs/T$ and hence it has a canonical map to $R_b(T)$. 
\end{proof}

Finally, note that since the reduction functor is idempotent and has a left adjoint (Proposition
 \ref{prop:reduction-is-left-adjoint}), the colimit of a diagram of reduced pairs is reduced.

\subsection{Differential enhancements of seminorms}\label{sec:differential-enhancement}

Given a seminorm $v$ on $R$ as in Definition \ref{def:generalized-valuations}, $v(x)$ does not
in general determine the seminorm of derivatives of $x$.  In order to define tropical differential equations,
we must enhance the seminorm with some additional information in order to determine the seminorms of
sequences $a, da, d^2a, \ldots$.  To this end, we now introduce differential enhancements of seminorms.
\begin{definition}
Given a differential ring $R$ and a non-archimedean seminorm $v\co R \to S_0$, a \emph{differential enhancement of $v$} is a reduced pair $\mathbf{S} = (S_1\to S_0)$ and a map of sets $\widetilde{v}\co R \to S_1$ such that
\begin{enumerate}
\item $\widetilde{v}(0) = 0\in S_1$ and $\widetilde{v}(1) = 1\in S_1$;
\item it commutes with the differentials: $d_{S_1} \widetilde{v}(x) = \widetilde{v}(d_R x)$ for any $x\in R$;
\item the following diagram commutes:
\begin{center}
\begin{tikzcd}
                        & S_1 \arrow[d] \\
R \arrow[ur, "\widetilde{v}"] \arrow[r,"v"] & S_0 \\
\end{tikzcd}
\end{center}
\end{enumerate}

We will use the term \emph{differentially enhanced seminorm} $\mathbf{v}=(v,\widetilde{v})\co A \to \mathbf{S}$ to mean a seminorm $v$ together with a differential enhancement $\widetilde{v}$.  
\end{definition}

Note that if $(v,\widetilde{v})\co A \to \mathbf{S}$ is a differentially enhanced seminorm and  $(f_0, f_1)\co \mathbf{S} \to  \mathbf{T}$ is a morphism of pairs, then the composition 
\begin{center}
\begin{tikzcd}
       & S_1 \arrow[d] \arrow[r, "f_1"] & T_1 \arrow[d] \\
 R   \arrow[ur,"\widetilde{v}"] \arrow[r,"v"] & S_0 \arrow[r,"f_0"]   & T_0     
\end{tikzcd}
\end{center}
is also a differentially enhanced seminorm.

\begin{example}\label{ex:grigoriev-seminorm}
Let $k$ be a field and consider the differential ring of formal power series $k[[t]]$ with differential $d/dt$.  The $t$-adic norm $k[[t]] \to \mathbb{T}$  admits a differential enhancement 
\begin{center}
\begin{tikzcd}
                        & \mathbb{B}[[t]] \arrow[d] \\
k[[t]] \arrow[ur, "\widetilde{v}"] \arrow[r,"v"] & \mathbb{T} \\
\end{tikzcd}
\end{center}
in which the map $\mathbb{B}[[t]] \to \mathbb{T}$ sends a boolean power series $t^n \oplus \cdots$ to $\mathrm{exp}(-n)$.  Note that while $v$ is multiplicative, its differential enhancement $\widetilde{v}$ is not.  For instance, 
\[	
\widetilde{v}( (1+t)(1-t) ) = 1 \oplus t^2,  \text{ whereas } \widetilde{v}(1+t) \cdot
\widetilde{v}(1-t) = 1 \oplus t \oplus t^2.
\]
This is the differentially enhanced seminorm used by Grigoriev \cite{Grigoriev-TDE} in his framework
and subsequent works
\cite{AGT-fundamental-theorem,Cotterill-DE,Falkensteiner-Fundamental-theorem,Fink-differential-initial-forms}.
\end{example}

\begin{example}
Consider the $p$-adic seminorm $v_p\co \mathbb{Q} \to \mathbb{T}$ and extend this to a seminorm $\mathbb{Q}[[t]] \to \mathbb{T}_2$ as in \eqref{eq:rank2valuation}.  This admits a differential enhancement
\begin{center}
\begin{tikzcd}
                        & \mathbb{T}[[t]] \arrow[d] \\
Q[[t]] \arrow[ur, "\widetilde{u}"] \arrow[r,"u"] & \mathbb{T}_2 \\
\end{tikzcd}
\end{center}
where the differential on $\mathbb{T}[[t]]$ is by \eqref{eq:nontriv-d-dt}, and the vertical arrow sends $a_0 t^{n_0} \oplus \cdots$ to $(\mathrm{exp}(-n_0), a_0)$.  Let $\mathbf{u} =(u,\widetilde{u})$.
There is a morphism of pairs
\begin{center}
\begin{tikzcd}
\mathbb{T}[[t]] \arrow[d] \arrow[r] & \mathbb{B}[[t]] \arrow[d]\\
\mathbb{T}_2 \arrow[r] & \mathbb{T}
\end{tikzcd}
\end{center}
given on the top by the sending all non-zero coefficients to 1, and on the bottom by projection onto the first component.
This morphism of pairs sends the differentially enhanced seminorm $\mathbf{u}$ to the $\mathbf{v}$ of Example \ref{ex:grigoriev-seminorm}. Thus $\mathbf{u}$ provides a refinement of the structure considered by Grigoriev.
\end{example}

While a seminorm $u\co R \to S_0$ may admit multiple distinct differential enhancements $\widetilde{u}$, as illustrated in the examples above, it turns out that there is at most one for any given reduced pair $\mathbf{S}$ over $S_0$.

\begin{proposition} \label{prop:unique-diff-enh}
Let $u\co R \to S_0$ be a seminorm and $S_1 \stackrel{\pi}{\to} S_0$ a reduced pair.  If $\widetilde{u},\widetilde{u}'\co R \to S_1$ are two differential enhancements of $u$, then $\widetilde{u} = \widetilde{u}'$.
\end{proposition}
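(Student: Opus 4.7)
The strategy is to convert the pointwise uniqueness assertion into a statement that the reducedness hypothesis can settle. Precisely, I would construct a differential congruence $K$ on $S_1$ satisfying (i) $K \subseteq \mathrm{ker}\:\pi$ and (ii) $(\widetilde{u}(x), \widetilde{u}'(x)) \in K$ for every $x \in R$. Reducedness of $\mathbf{S}$ then forces $K$ to be the diagonal, giving $\widetilde{u}(x) = \widetilde{u}'(x)$ for all $x \in R$.

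To construct $K$, I would invoke the universal property of the tropical Ritt algebra (Proposition \ref{prop:univ-property-of-tropical-ritt}), applied to a variable for each element of $R$ (obtained as the filtered colimit of finitely-presented Ritt algebras $S_1\{y_{r_1}, \ldots, y_{r_n}\}$). Let $\mathscr{R} \defeq S_1\{y_r : r \in R\}$. The universal property produces two differential $S_1$-algebra homomorphisms
\[
\mathrm{ev}, \mathrm{ev}' \co \mathscr{R} \longrightarrow S_1
\]
specified by $y_r \mapsto \widetilde{u}(r)$ and $y_r \mapsto \widetilde{u}'(r)$ respectively. Since both enhancements commute with $d$, these homomorphisms send every formal derivative $d^j y_r$ to $\widetilde{u}(d^j r)$ and $\widetilde{u}'(d^j r)$. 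Pairing them into a differential semiring homomorphism $(\mathrm{ev}, \mathrm{ev}')\co \mathscr{R} \to S_1 \times S_1$, its image $L$ is a sub-differential-semiring of $S_1 \times S_1$ containing the diagonal (via constants $f \in S_1 \subseteq \mathscr{R}$) and every pair $(\widetilde{u}(r), \widetilde{u}'(r))$ (via $f = y_r$). Taking $K$ to be the symmetric and transitive closure of $L$ then produces a differential congruence: a standard zigzag argument preserves both the sub-semiring property and the $d$-closure inherited from $L$. This gives property (ii).

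The crux --- and the step I expect to be the main obstacle --- is property (i). Equivalently, one wants the two semiring homomorphisms $\pi \circ \mathrm{ev}$ and $\pi \circ \mathrm{ev}' \co \mathscr{R} \to S_0$ to coincide. Both restrict to $\pi$ on the coefficient sub-semiring $S_1 \subseteq \mathscr{R}$ and both send every formal derivative $d^j y_r$ to $u(d^j r) \in S_0$, so they agree on the basic polynomial sub-semiring $\mathscr{R}_\mathit{basic} \subseteq \mathscr{R}$. Extending this agreement to the whole of $\mathscr{R}$ is delicate because the tropical Ritt algebra carries additional tree-shaped generators such as $d(y_{r_1} y_{r_2})$, whose images in $S_0$ are a priori constrained only by the tropical Leibniz bend relation to lie beneath the common upper bound $u(r_1) u(d r_2) \oplus u(r_2) u(d r_1)$. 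Handling this requires a careful analysis of the bend ideal $\bend \langle L_{trop}\rangle$ used to define $\mathscr{R}$, combined with the reducedness of $\mathbf{S}$, in order to rule out any discrepancy between $\pi \circ \mathrm{ev}$ and $\pi \circ \mathrm{ev}'$ on these tree generators. Once that is established, $L \subseteq \mathrm{ker}\:\pi$, hence $K \subseteq \mathrm{ker}\:\pi$, and reducedness concludes.
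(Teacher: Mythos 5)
Your top-level strategy --- exhibit a differential congruence $K \subseteq \ker\pi$ containing every pair $(\widetilde{u}(x), \widetilde{u}'(x))$ and invoke reducedness --- is exactly the paper's. But the detour through the tropical Ritt algebra $\mathscr{R}=S_1\{y_r : r\in R\}$ makes $K$ strictly larger than it needs to be, and that is precisely what creates the obstacle you flag at the end. The image of $(\mathrm{ev},\mathrm{ev}')$ contains, for \emph{every} tree $T\in\mathscr{R}$, the pair $(\mathrm{ev}(T),\mathrm{ev}'(T))$; for $T=d(y_{r_1}y_{r_2})$ this is $\bigl(d(\widetilde{u}(r_1)\widetilde{u}(r_2)),\, d(\widetilde{u}'(r_1)\widetilde{u}'(r_2))\bigr)$, and nothing in the hypotheses places such a pair in $\ker\pi$: the tropical Leibniz rule only constrains these elements from above rather than determining them, and the enhancements are not assumed multiplicative, so their agreement under $\pi$ is not something you can extract from the bend ideal --- it would essentially have to be deduced from the conclusion $\widetilde{u}=\widetilde{u}'$ itself. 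So your step (i) is not a technical loose end awaiting "careful analysis"; for this choice of $K$ it is circular, and the argument cannot be completed along these lines.

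The fix is to dispense with the Ritt algebra entirely and take $K$ to be the congruence on $S_1$ generated by the relations $\widetilde{u}(x)\sim\widetilde{u}'(x)$ alone, which is what the paper does. Containment in $\ker\pi$ is then automatic: $\ker\pi$ is a congruence containing all the generators (since $\pi\circ\widetilde{u}=u=\pi\circ\widetilde{u}'$), and $K$ is the smallest such congruence. The remaining content is that $K$ is a \emph{differential} congruence, which follows because the generating set is stable under the componentwise differential --- $d\,\widetilde{u}(x)=\widetilde{u}(dx)$ and likewise for $\widetilde{u}'$ --- combined with the closure argument of Proposition \ref{prop:generating-a-differetial-congruence}. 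In effect, your construction was engineered to make the differential-congruence property trivial (the image of a differential homomorphism is $d$-stable), but it buys that convenience at the cost of the containment in $\ker\pi$, which is the step where the hypotheses actually have to do the work.
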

\begin{proof}
Consider the congruence $K$ on $S_1$ generated by the relations $\widetilde{u}(x) \sim \widetilde{u}'(x)$ for $x\in R$.  Since $\widetilde{u}$ and $\widetilde{u}'$ both commute with the differentials, $K$ is a differential congruence, and since $\pi\circ \widetilde{u} = \pi \circ \widetilde{u}'$, it follows that $K \subset \ker \pi$.  Now, since $S_1 \to S_0$ is reduced, $K$ must be trivial.
\end{proof}

In a differential ring $R$, an element $a\in R$ is said to be a \emph{constant} if $d(a)=0$.  The
constants form a subring of $R$.
\begin{proposition}
Given a differentially enhanced seminorm $\mathbf{v}=(\widetilde{v}\co R \to S_1, v\co R \to S_0)$ on $R$, $\widetilde{v}$ restricts to a seminorm on the subring of constants in $R$.
\end{proposition}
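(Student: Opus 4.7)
The plan is to mimic the reducedness argument used in the proof of Proposition \ref{prop:unique-diff-enh}. Let $C\subset R$ denote the subring of constants. The axioms $\widetilde v(0)=0_{S_1}$ and $\widetilde v(1)=1_{S_1}$ are already built into the definition of a differential enhancement, so the content of the proposition is to check that, when restricted to $C$, the map $\widetilde v$ satisfies $\widetilde v(-1)=1_{S_1}$, is multiplicative, and satisfies the ultrametric inequality of Definition \ref{def:generalized-valuations}(4).

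I would form the congruence $K$ on $S_1$ generated by the family of relations
\[
\widetilde v(-1)\sim 1_{S_1},\qquad \widetilde v(ab)\sim \widetilde v(a)\widetilde v(b),\qquad \widetilde v(a+b)\oplus \widetilde v(a)\oplus \widetilde v(b)\sim \widetilde v(a)\oplus \widetilde v(b),
\]
for all $a,b\in C$. Since $\mathbf{S}$ is reduced, it suffices to show that (i) $K\subseteq \ker\pi$ and (ii) $K$ is a differential congruence; triviality of $K$ then yields the three required equalities in $S_1$. Step (i) follows at once from $\pi\circ\widetilde v=v$ together with the fact that $v$ is itself a seminorm, so applying $\pi$ to each generator produces a true equality in $S_0$.

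For step (ii), by (the proof of) Proposition \ref{prop:generating-a-differetial-congruence}, it is enough to check that $d_{S_1}$ sends each generating pair into $K$. The ring $C$ is closed under $+$, $\cdot$, and contains $-1$, so every argument of $\widetilde v$ appearing in a generator is a constant of $R$; compatibility of $\widetilde v$ with differentials then gives $d_{S_1}\widetilde v(x)=\widetilde v(d_R x)=\widetilde v(0)=0$ for each such $x$. Likewise, applying $d_{S_1}$ to the known equality $\widetilde v(1)=1_{S_1}$ yields $d_{S_1}(1_{S_1})=0$. The one identity that is not automatic is $d_{S_1}(\widetilde v(a)\widetilde v(b))=0$, which I would obtain as follows: the tropical Leibniz relations in $S_1$ assert that the bend relations of
\[
d_{S_1}(\widetilde v(a)\widetilde v(b))\oplus \widetilde v(a)\,d_{S_1}\widetilde v(b)\oplus \widetilde v(b)\,d_{S_1}\widetilde v(a)
\]
hold as genuine equalities; because $d_{S_1}\widetilde v(a)=d_{S_1}\widetilde v(b)=0$, the bend relation obtained by deleting the first summand collapses to $d_{S_1}(\widetilde v(a)\widetilde v(b))=0$. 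With these identities, $d_{S_1}$ sends every generator of $K$ to the trivial relation $0\sim 0$, confirming that $K$ is a differential congruence and completing the argument.

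The main obstacle is precisely this collapse: one needs to use that the bend relations hold as \emph{equalities} in the tropical differential semiring $S_1$ (not merely modulo some congruence being introduced), so that when two of the three terms in the tropical Leibniz identity vanish, the remaining term is forced to vanish as well. Everything else is bookkeeping built on the reducedness of $\mathbf{S}$, exactly as in Proposition \ref{prop:unique-diff-enh}.
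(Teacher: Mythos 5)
Your argument is correct and is essentially the paper's own proof: form the congruence on $S_1$ generated by the seminorm relations among the $\widetilde{v}$-values of constants, observe that it lies in $\ker \pi$ because $v$ is a seminorm, check that it is a differential congruence, and invoke reducedness of $\mathbf{S}$ to conclude the relations already hold in $S_1$. You are in fact slightly more explicit than the paper at the one delicate point, namely using the bend form of the tropical Leibniz rule to deduce $d_{S_1}(\widetilde{v}(a)\widetilde{v}(b))=0$ when $a,b$ are constants.
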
\label{prop:val-on-constants}
\begin{proof}

Suppose $a,b$ are constants and consider the semiring congruence $K$ on $S_1$ generated by the relations 
\begin{align*}
\widetilde{v}(0) & \sim 0_{S_1}\\
\widetilde{v}(1)  & \sim \widetilde{v}(-1) \\ 
\widetilde{v}(ab)  & \sim \widetilde{v}(a)\widetilde{v}(b)\\
\widetilde{v}(a + b) \oplus	\widetilde{v}(a) \oplus \widetilde{v}(b) & \sim \widetilde{v}(a) \oplus \widetilde{v}(b).
\end{align*}
Since $v$ is a non-archimedean seminorm, the relations $v(a+b) \oplus v(a) \oplus v(b) = v(a) \oplus v(b)$ and
$v(ab) = v(a)v(b)$ hold in $S_0$, and hence the semiring homomorphism $S_1 \to S_0$ factors through
the quotient semiring $S_1/K$ because each of the generators of $K$ (as a semiring congruence) is a
relation that holds in $S_0$.     Since $\widetilde{v}$ commutes with the differentials,
$\widetilde{v}(1)$, $\widetilde{v}(-1)$, $\widetilde{v}(a)$, $\widetilde{v}(b)$,
$\widetilde{v}(a+b)$ and $\widetilde{v}(ab)$ are each constants in $S_1$.  From this we see that $K$
is in fact a congruence of differential semirings.  If $K$ were nontrivial then the factorization
$S_1\to S_1/K \to S_0$ would contradict the fact that $S_1\to S_0$ is reduced.  Thus the equalities
\begin{align*}
\widetilde{v}(0) & = 0_{S_1}\\
\widetilde{v}(1)  & = v(-1)\\ 
	\widetilde{v}(a)\widetilde{v}(b) &= \widetilde{v}(ab),\\
 	\widetilde{v}(-a-b) \oplus  \widetilde{v}(a) \oplus \widetilde{v}(b) &= \widetilde{v}(a) \oplus \widetilde{v}(b)
\end{align*}
must hold in $S_1$.
\end{proof}

\subsection{The differential Berkovich space}

Let $k$ be a ring with a non-archimedean seminorm $v\co k \to \mathbb{T}$.  Given a $k$-algebra $A$, recall that the
Berkovich analytification of $\spec A$ is the set of seminorms $w\co A \to \mathbb{T}$ that are
compatible with $v$ in the sense that the composition $k \to A \stackrel{w}{\to} \mathbb{T}$ is
equal to $v$.  The analytification is denoted $(\spec A)^{\mathit{an}}$.  It is equipped with a
topology that we will not discuss here.

We now propose a generalization to the differential setting. Suppose $k$ is a differential ring
equipped with a differentially enhanced seminorm $\mathbb{v}$ to $\mathbf{S}= (S_1 \to S_0)$, and let $A$ be a
differential $k$-algebra.  Given an $\mathbf {S}$-algebra $\mathbf{T}$, a differentially enhanced seminorm
$\mathbf{w}=(\widetilde{w}, w)$ is said to be compatible with $\mathbf{v}$ if the diagram
\begin{center}
\begin{tikzcd}
          & S_1 \arrow[r] \arrow[dd] & T_1 \arrow[dd] \\
A \arrow[rrd, crossing over, near end, "w"] \arrow[rd, "v"'] \arrow[ru, "\widetilde{v}"] \arrow[rru, crossing over, "\widetilde{w}"', near end] 
          &                          &                \\
          & S_0 \arrow[r]            & T_0           
\end{tikzcd}
\end{center}
commutes.

\begin{definition}
Given an $\mathbf{S}$-algebra $\mathbf{T}$, the \emph{differential Berkovich space} of $A$, denoted
$\mathit{Berk}_{\mathbf{T}}(A)$, is the set of differentially enhanced seminorms $\mathbf{w}\co A \to
\mathbf{T}$ that are compatible with $\mathbf{v}$.
\end{definition}

Note that there is a natural map $\mathit{Berk}_{\mathbf{T}}(A) \to (\spec A)^\mathit{an}$ induced
by sending a differentially enhanced seminorm $\mathbf{w}=(\widetilde{w},w)$ to its underlying ordinary seminorm $w$. This map is injective thanks to Proposition \ref{prop:unique-diff-enh}.

\section{Tropicalization}\label{sec:tropicalization}

\subsection{Review of the non-differential case}
In the familiar non-differential setting, one starts with a field $k$ with a non-archimedean seminorm $v\co k \to
\mathbb{T}$, and one then defines three tropicalization maps:
\begin{enumerate}
\item Tropicalization of points is the map $\trop\co k^n \to \mathbb{T}^n$ given by applying the seminorm coordinate-wise.
\item Tropicalization of equations is the map $\trop\co k[x_1,\ldots, x_n] \to \mathbb{T}[x_1, \ldots, x_n]$
given by applying the seminorm coefficient-wise.  This extends to a map sending ideals in
$k[x_1,\ldots, x_n]$ to ideals in $\mathbb{T}[x_1,\ldots, x_n]$.
\item Tropicalization of varieties sends $V(I)$ to the subset of $\mathbb{T}^n$ defined by the intersection of the tropical
hypersurfaces of all $f \in \trop(I)$
\end{enumerate}
An ideal $I\subset k[x_1, \ldots, x_n]$ is of course a system of polynomial equations, and a
solution to this system is the same as a homomorphism $k[x_1,\ldots, x_n] /I \to k$ (or a
homomorphism to some $k$-algebra $A$). Since the tropical hypersurface of a tropical polynomial $f$
is exactly the solution set of the bend relations of $\trop(f)$, it follows that the tropicalization
of a variety is the set of solutions to the bend relations of the tropicalization of its defining
ideal.  Moreover, solutions to these bend relations are precisely homomorphisms of semirings
$\mathbb{T}[x_1, \ldots, x_n]/\bend \trop(I) \to \mathbb{T}$.  One can thus think of the semiring
$\mathbb{T}[x_1, \ldots, x_n]/\bend \trop(I)$ as the coordinate algebra of the tropical variety, and
hence tropicalization of varieties has an incarnation at the level of algebras given by
\[
	k[x_1,\ldots, x_n] /I \mapsto \mathbb{T}[x_1,\ldots, x_n] /\bend \trop(I)
\]
Note that this is a construction carried out when the seminorm $v$ takes values in an
idempotent semiring, not just $\mathbb{T}$; see \cite{GG1} for further details.


\subsection{Differential tropicalization}
We now turn to the differential setting.  Let $k$ be a differential ring equipped with a
differentially enhanced seminorm $\mathbf{v}=(\widetilde{v},v)\co k \to \mathbf{S}=(S_1 \stackrel{\pi}{\to} S_0)$
\begin{enumerate}
\item We tropicalize points $p\in k^n$ via the map $\trop\co k^n \to S_1^n$ defined by applying
$\widetilde{v}$ component-wise.
\item We tropicalize differential equations by applying $v$ coefficient-wise to define a map 
\[
\trop\co k\{x_1, \ldots, x_n\} \to S_0\{x_1, \ldots, x_n\}_\mathit{basic}.
\]
We write $\trop(I)$ for the ideal generated by the image of $I$, and so there is an induced a map
sending ideals in  $k\{x_1,
\ldots, x_n\}$ to ideals in $S_0\{x_1, \ldots, x_n\}_\mathit{basic}$. 
\item We use the tropicalization of equations map to define a construction sending quotients
$\alpha\co k\{x_1, \ldots, x_n\} \twoheadrightarrow k\{x_1, \ldots, x_n\}/I$ to quotients
$\mathbf{trop}(\alpha)$ of the pair $\mathbf{S}\{x_1, \ldots, x_n\}$.  Define
\[
\mathbf{trop}(\alpha) \defeq \mathbf{S}\{x_1, \ldots, x_n\} \sslash \bend \trop(I),
\] 
where $\bend \trop(I)$ is the congruence on $(S_0 | S_1)\{x_1, \ldots, x_n\}$ generated by the bend
relations of $\trop(I)$ and we use the quotient construction at the end of Section
\ref{sec:quotients}. 
\end{enumerate}

\begin{remark}
Since $(S_0 | S_1)\{x_1, \ldots, x_n\}$ is not a polynomial algebra, we cannot form the bend
relations of an arbitrary element in it.  The above construction uses the fact that applying $v$
coefficient-wise lands in $S_0\{x_1, \ldots, x_n\}_\mathit{basic} \subset (S_0 | S_1)\{x_1, \ldots,
x_n\}$, and this algebra is a polynomial algebra so we can form bend relations in it.
\end{remark}

\begin{proposition}
Given a differential ideal $I \subset k\{x_1, \ldots, x_n\}$, tropicalization of points 
\[
\trop\co k^n \to S_1^n\]
 sends $\mathrm{Sol}(I)$ into $\mathrm{Sol}(\trop(I))$.
\end{proposition}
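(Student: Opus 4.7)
The plan is to reduce to the case of a single differential polynomial $f \in I$ and compute both sides directly; extending from a single $f$ to the ideal $\trop(I)$ is then routine since solution sets of ideals are intersections of solution sets of generators, and the definitions respect this.

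Fix $p = (p_1, \ldots, p_n) \in \mathrm{Sol}(I)$ and $f = \sum_\alpha f_\alpha x^\alpha \in I$. The key computation is the identity $\pi(\widetilde{v}(p)^\alpha) = v(p^\alpha)$ for each differential monomial $x^\alpha = \prod_{i,j}(d^j x_i)^{\alpha_{ij}}$. This rests on two facts. First, since $\widetilde{v}$ commutes with differentials, $d_{S_1}^j \widetilde{v}(p_i) = \widetilde{v}(d^j p_i)$, and hence evaluating $x^\alpha$ at $\widetilde{v}(p)$ in $S_1$ produces $\prod_{i,j} \widetilde{v}(d^j p_i)^{\alpha_{ij}}$. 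Second, applying the semiring homomorphism $\pi$ distributes over the product, and since $v = \pi \circ \widetilde{v}$ is multiplicative on $k$, we obtain $\prod_{i,j} v(d^j p_i)^{\alpha_{ij}} = v(p^\alpha)$. Consequently,
\[
\trop(f)(\trop(p)) \;=\; \bigoplus_\alpha v(f_\alpha)\, \pi(\widetilde{v}(p)^\alpha) \;=\; \bigoplus_\alpha v(f_\alpha p^\alpha).
\]

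To see this expression tropically vanishes, I would invoke $\sum_\alpha f_\alpha p^\alpha = 0$ in $k$, which for each fixed index $\beta$ gives $f_\beta p^\beta = -\sum_{\alpha \neq \beta} f_\alpha p^\alpha$. Iterating the ultrametric inequality (condition (4) of Definition 2.3.1) across the terms on the right hand side yields
\[
v(f_\beta p^\beta) \,\oplus\, \bigoplus_{\alpha \neq \beta} v(f_\alpha p^\alpha) \;=\; \bigoplus_{\alpha \neq \beta} v(f_\alpha p^\alpha),
\]
which is precisely the bend relation $B_\beta$ for the expression $\bigoplus_\alpha v(f_\alpha p^\alpha)$. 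Since $\beta$ was arbitrary, all bend relations hold and the sum tropically vanishes.

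I do not anticipate any substantial obstacle. The only minor points that need care are the interaction between $\widetilde{v}$ and products, which is handled by pushing $\pi$ through at the end (recall $\widetilde{v}$ itself need not be multiplicative, as in Example \ref{ex:grigoriev-seminorm}), and the iteration of the ultrametric inequality from the two-term case of Definition 2.3.1 to an arbitrary finite sum, which proceeds by a straightforward induction on the number of summands.
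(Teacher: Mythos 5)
Your proof is correct and follows essentially the same route as the paper's: reduce to a single $f\in I$, use the commutation of $\widetilde{v}$ with the differential together with multiplicativity of $v=\pi\circ\widetilde{v}$ to identify $\trop(f)(\trop(p))$ with $\bigoplus_\alpha v(f_\alpha p^\alpha)$, and then deduce tropical vanishing from $f(p)=0$. You merely spell out in more detail the final step (deriving each bend relation $B_\beta$ by iterating condition (4)), which the paper leaves implicit.
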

\begin{proof}
It suffices to show that if $p\in k^n$ is a solution to $f \in k\{x_1, \ldots, x_n\}$, then
$\trop(p)\in S_1^n$ is a solution to $\trop(f) \in S_0\{x_1, \ldots, x_n\}_\mathit{basic}$.  Write $f=
\sum_{\epsilon \in \mathrm{supp} f} f_\epsilon x^\epsilon$, where $x^\epsilon$ denotes a
differential monomial in the variables $x_i$.   We have $f(p) = 0$, so $v(\sum_\epsilon f_\epsilon
p^\epsilon) = 0$ in $S_0$.  Since $v\co k \to S_0$ is a non-archimedean seminorm, this happens if and only if the sum
\[
	\sum_\epsilon v(f_\epsilon p^\epsilon),
\]
tropically vanishes.  Since the differential enhancement map $\widetilde{v}$ commutes with the
differentials, we have that $v(f_\epsilon p^\epsilon)$  is equal to $v(f_\epsilon)
\pi(\widetilde{v}(p)^\epsilon)$, which is equal to the evaluation of the differential monomial
$v(f_\epsilon)x^\epsilon$ at the point $\trop(p)$.  Thus $\trop(f)$ tropically vanishes at
$\trop(p)$.
\end{proof}

\subsection{Functoriality of tropicalization}

Tropicalization of differential equations sends a presentation of a differential algebra to a
tropical pair.  Here we show that this defines a functor from a category of presentations to the
category of tropical pairs.

A homomorphism of differential algebras \[f\co k\{x_1, \ldots,  x_n\} \to k\{y_1, \ldots,  y_m\}\] is
said to be \emph{monomial} if each variable $x_i$ is sent to a monomial (with coefficient 1) in the
variables $y_i$ and their derivatives (it need not send monomials to monomials in general), and it
is said to be \emph{linearly monomial} if each $x_i$ is sent to some $y_j$, a derivative of $y_j$, or to 0.

\begin{example}
The map $f\co k\{x\} \to k\{y\}$ given by $f(x) = y dy$ is monomial even though it sends the monomial
$dx$ to $(dy)^2 + yd^2y$.  However, it is not linearly monomial.
\end{example}

We now define the category of presentations $\mathrm{Pres}(A)$. Objects of this category are
presentations of $A$; i.e., an object is a Ritt algebra $k\{x_i \: | i \in \Lambda\}$ together with
a surjective homomorphism $k\{x_i \: | i \in \Lambda\} \to A$, and whose morphisms are commutative
triangles
\begin{equation}\label{eq:monomial-morphism}
\begin{tikzcd}
k\{x_1, \ldots, x_n\} \arrow[rr,"f"] \arrow[dr] &      & k\{y_1, \ldots, y_m\} \arrow[dl] \\
                                      &  A  & 
\end{tikzcd}
\end{equation}
where $f$ is a monomial morphism.  We allow the set $\Lambda$ of variables to be infinite, and let
$\mathrm{Pres}^\mathit{fin}(A)$ denote the subcategory of finite presentations.   Let
$\mathrm{Pres}_\mathit{lin}(A) \subset \mathrm{Pres}(A)$ denote the subcategory of presentations and
linearly monomial morphisms.


\begin{proposition}
The tropicalization construction $(k\{x_i \: | \: i \in \Lambda\} \stackrel{\alpha}{\to} A) \mapsto \mathbf{trop}(\alpha)$ yields a functor $\mathrm{Pres}(A) \to \mathbf{S}\mathrm{\mhyphen alg}$.
\end{proposition}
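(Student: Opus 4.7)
The plan is to construct $\mathbf{trop}(f)$ via the universal properties established in the previous section, and then to verify functoriality by uniqueness. Given a monomial morphism $f\co k\{x_i:i\in\Lambda\}\to k\{y_j:j\in\Lambda'\}$ in $\mathrm{Pres}(A)$ with $f(x_i)=M_i$, I first regard each $M_i$ as an element of $S_1\{y_j\}$ (the same monomial, with coefficient $1_{S_1}$) and invoke Proposition~\ref{prop:ritt-pair-universal-property} to obtain a unique morphism of $\mathbf{S}$-algebras $F\co \mathbf{S}\{x_i\}\to \mathbf{S}\{y_j\}$ with $F_1(x_i)=M_i$. Composing with the canonical quotient $q\co \mathbf{S}\{y_j\}\twoheadrightarrow \mathbf{trop}(\beta)$ yields $\widehat F=q\circ F$.

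The substantive step is to show that $\widehat F$ factors through the quotient $\mathbf{S}\{x_i\}\twoheadrightarrow \mathbf{trop}(\alpha)=\mathbf{S}\{x_i\}\sslash \bend\trop(\ker\alpha)$. Since $\mathbf{trop}(\beta)$ is reduced, Proposition~\ref{prop:quotient-property} reduces this to verifying that the bottom-component map $\widehat F_0$ annihilates $\bend\trop(\ker\alpha)$. Commutativity of~\eqref{eq:monomial-morphism} gives $f(\ker\alpha)\subset \ker\beta$, so for every $g\in\ker\alpha$ one has $f(g)\in\ker\beta$ and hence $\bend\trop(f(g))\subset\bend\trop(\ker\beta)$. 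The task therefore reduces to a comparison lemma identifying the image of the bend relations of $\trop(g)=\bigoplus_\epsilon v(g_\epsilon)x^\epsilon$ under $\widehat F_0$ with relations already in $\bend\trop(f(g))$.

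The main obstacle is this comparison. For each differential monomial $x^\epsilon$, the element $F_1(x^\epsilon)\in S_1\{y_j\}$ is obtained by substituting $x_i\mapsto M_i$ and iteratively applying the tropical differential (inserting edges at the roots of the corresponding forests), whereas the classical expansion of $f(x^\epsilon)\in k\{y_j\}$ uses the ordinary Leibniz rule and produces monomials with integer coefficients. The bend-Leibniz relations $\bend\langle L_{trop}\rangle$ already built into the construction of $S_1\{y_j\}$ absorb these integer coefficients; the crucial fact is that $v(n)\le 1_{S_0}$ for every nonzero integer $n$ since $v$ is non-archimedean, so that upon pushing down to $(S_0|S_1)\{y_j\}$, the weighted sum $\widehat F_0(\trop(g))=\bigoplus_\epsilon v(g_\epsilon)\,\pi(F_1(x^\epsilon))$ is bend-equivalent to $\trop(f(g))$ modulo the relations already present in $\bend\langle L_{trop}\rangle$. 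Carefully tracking these bend manipulations through repeated applications of $d$ — in particular ensuring that nothing beyond $\bend\trop(\ker\beta)$ is needed to complete the descent — is the technical heart of the argument.

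Finally, functoriality follows by the uniqueness clause of Proposition~\ref{prop:ritt-pair-universal-property}. For the identity $k\{x_i\}\to k\{x_i\}$ we have $f(x_i)=x_i$, forcing $\mathbf{trop}(\mathrm{id})=\mathrm{id}$. For a composition $h=g\circ f$ with $f(x_i)=M_i$ and $g(y_j)=N_j$, both $\mathbf{trop}(h)$ and $\mathbf{trop}(g)\circ \mathbf{trop}(f)$ send $x_i$ to $g(M_i)=h(x_i)$ (read in the tropical Ritt of the third presentation), and uniqueness then forces them to agree.
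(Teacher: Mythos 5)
Your architecture is the same as the paper's --- induce a map of tropical Ritt pairs from the monomial data via the universal property, descend along the bend congruences, pass to reductions, and get functoriality from uniqueness --- but the one step that actually carries the mathematical content is left undone. You correctly isolate it: one must show that $\widehat F_0$ sends every generating relation of $\bend\trop(\ker\alpha)$ to a relation already implied by $\bend\trop(\ker\beta)$ (together with the tropical Leibniz relations). You then describe the obstacles to proving this and conclude that resolving them ``is the technical heart of the argument'' --- which is precisely an admission that the heart is missing. The paper does not reprove this either; it invokes \cite[Prop.\ 6.4.1]{GG1}, where it is shown that a \emph{monomial} homomorphism carries bend congruences into bend congruences (the essential point being that a monomial map sends each monomial of $g$ to a single monomial of $f(g)$, so the relation ``delete the $j$-th term'' maps to the relation ``delete the corresponding term''). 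If you are not going to cite that result, you must actually carry out the comparison, and your sketch of it is not yet an argument: the inequality $v(n)\le 1_{S_0}$ for integers $n$ plays no role in the mechanism, and the genuine issue --- that $F_1(d^jx_i)=d^j(M_i)$ is a single tree whereas the classical $f(d^jx_i)$ is a Leibniz expansion into several basic monomials, so that $\widehat F_0(\trop(g))$ and $\trop(f(g))$ are \emph{different elements} of $(S_0|S_1)\{y_j\}$ that must be related through the congruence $\bend\langle L_{trop}\rangle$ before the bend relations of one can be compared with those of the other --- is named but not resolved. Without that lemma the descent of $\widehat F$ through $\mathbf{S}\{x_i\}\sslash\bend\trop(\ker\alpha)$ is unproved.

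Two smaller points. First, your appeal to Proposition~\ref{prop:quotient-property} is slightly off target: that proposition is stated for congruences $K$ on $S_0\{x_1,\ldots,x_n\}_{basic}$ and characterizes maps out of $\mathbf{S}\{x_1,\ldots,x_n\}\sslash K$ by their values on the $x_i$; what you actually need is the bare fact that a morphism to a reduced pair whose bottom component kills the congruence factors through the quotient and then through its reduction, which is Propositions~\ref{prop:reduction-universal-property} and~\ref{prop:reduction-is-left-adjoint}. Second, your functoriality argument via the uniqueness clause of Proposition~\ref{prop:ritt-pair-universal-property} is correct and is essentially what the paper means by ``functorially induced.''
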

\begin{proof}
Given a morphism of presentations
\[
\begin{tikzcd}
k\{x_i \: | \: i \in \Lambda_1\} \arrow[rr,"f"] \arrow[dr, "\alpha"'] &      & k\{y_j \: | \: j \in \Lambda_2\} \arrow[dl,"\beta"] \\
                                      &  A  & 
\end{tikzcd}
\]
each monomial in $k\{x_i \: | \: i \in \Lambda_1\}$ corresponds to a monomial in the tropical Ritt
algebra $\mathbf{S}\{x_i \: | \: i \in \Lambda_1\}$, and so it follows from Proposition
\ref{prop:univ-property-of-tropical-ritt} that there is a functorially induced morphism of
$\mathbf{S}$-algebras
\[
f_*\co \mathbf{S}\{x_i \: | \: i \in \Lambda_1\} \to \mathbf{S}\{y_j \: | \: j \in \Lambda_2\},
\]
and this restricts to a morphism of the basic subalgebras on the bottom.  Moreover, the congruence
$\bend \trop (\ker \alpha)$ on $S_0\{x_i \:|\: i \in \Lambda_1\}_\mathit{basic}$ is sent into the
congruence $\bend \trop (\ker \beta)$ on $S_0\{y_j \: | \: j \in \}_\mathit{basic}$ by \cite[Prop.
6.4.1]{GG1}. Hence $f_*$ descends to a morphism of quotient pairs:
\[
\begin{tikzcd}
S_1\{x_i \: | \: i \in \Lambda_1\}  \ar[d] \ar[r] & S_1\{y_j \: | \: j \in \Lambda_2\} \ar[d] \\
(S_0|S_1)\{x_i \: | \: i \in \Lambda_1\} / \bend \trop (\ker \alpha) \ar[r] & (S_0|S_1)\{y_j \: | \: j \in \Lambda_2\} / \bend \trop (\ker \beta)
\end{tikzcd}
\]
and by Proposition \ref{prop:reduction-is-left-adjoint} this induces a morphism of their reductions,
which is precisely the desired morphism
\[
\mathbf{trop}(\alpha) \to \mathbf{trop}(\beta). \qedhere
\]
\end{proof}


\subsection{The universal presentation}

Given a differential $k$-algebra $A$, consider the presentation 
\[
k\{x_a \: | \:a \in A\}
\stackrel{\mathit{Univ}}{\twoheadrightarrow} A
\]
defined by sending $x_a$ to $a$.  It takes a formal differential polynomial in the elements of $A$
and evaluates it to an element of $A$ using the differential algebra structure of $A$. A similar
morphism was studied in the non-differential setting in \cite{GG2}.  In light of the following fact,
we call this the \emph{universal presentation} of $A$.

\begin{proposition}\label{prop:universal-presentation}
The presentation $k\{x_a \:| \: a \in A\} \stackrel{\mathit{Univ}}{\twoheadrightarrow} A$ is
\begin{enumerate}
\item the final object in $\mathrm{Pres}_\mathit{lin}(A)$, and
\item the colimit of the inclusion functor $\iota\co \mathrm{Pres}^\mathit{fin}_\mathit{lin}(A) \hookrightarrow \mathrm{Pres}_\mathit{lin}(A)$.
\end{enumerate}
\end{proposition}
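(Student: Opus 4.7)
The plan is to prove both parts by leveraging the canonical morphism
\[
f_\beta \co k\{y_j \:|\: j \in \Lambda\} \longrightarrow k\{x_a \:|\: a \in A\}, \qquad f_\beta(y_j) = x_{\beta(y_j)},
\]
associated to each presentation $\beta \co k\{y_j \:|\: j \in \Lambda\} \twoheadrightarrow A$. For part (1), this map extends uniquely to a differential $k$-algebra homomorphism by the universal property of Ritt algebras, it is linearly monomial by construction (each generator goes to a generator), and satisfies $\mathit{Univ} \circ f_\beta = \beta$ since $\mathit{Univ}(x_{\beta(y_j)}) = \beta(y_j)$. Uniqueness among linearly monomial morphisms of presentations over $A$ is forced by the requirement that the image of $y_j$ be a single generator of the target mapping to $\beta(y_j)$ under $\mathit{Univ}$, which canonically singles out $x_{\beta(y_j)}$.

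For part (2), I would first observe that $\{f_\alpha\}_\alpha$ assembles into a cone from $\iota$ to $\mathit{Univ}$: for any morphism $g \co \alpha \to \alpha'$ in $\mathrm{Pres}^\mathit{fin}_\mathit{lin}(A)$, the equality $f_{\alpha'} \circ g = f_\alpha$ follows from the uniqueness just established. Given any other cone $(c_\alpha \co \alpha \to \gamma)_\alpha$ landing in a presentation $\gamma \co k\{z_l\} \twoheadrightarrow A$, I would construct the factoring map $\phi \co \mathit{Univ} \to \gamma$ by setting $\phi(x_a) = c_\alpha(y_j)$ for any choice of finite presentation $\alpha$ with $\alpha(y_j) = a$. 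Such a presentation always exists: take any finite presentation of a finitely generated subalgebra containing $a$ and augment with $a$ as an extra generator. Once well-defined, $\phi$ extends uniquely to a differential algebra homomorphism; it is linearly monomial since each $c_\alpha(y_j)$ is, it lies over $A$ since $\gamma(c_\alpha(y_j)) = \alpha(y_j) = a = \mathit{Univ}(x_a)$, and the factorization $\phi \circ f_\alpha = c_\alpha$ holds by construction. Uniqueness of $\phi$ follows because every $a \in A$ arises as $\alpha(y_j)$ for some finite presentation in the index category.

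The main obstacle will be well-definedness of $\phi$: if $\alpha(y_j) = a = \alpha'(y'_{j'})$ for two different choices, I must show $c_\alpha(y_j) = c_{\alpha'}(y'_{j'})$. I would resolve this by assembling a common refinement $\alpha''$ whose generators contain both those of $\alpha$ and of $\alpha'$, together with the inclusion morphisms $\alpha \hookrightarrow \alpha''$ and $\alpha' \hookrightarrow \alpha''$ in $\mathrm{Pres}^\mathit{fin}_\mathit{lin}(A)$, followed by a further linearly monomial morphism $\alpha'' \to \alpha'''$ that drops the generator $y'_{j'}$ by sending it to $y_j$ (permissible since both have the same image $a$ in $A$, so surjectivity is preserved). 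Compatibility of the cone with these three morphisms forces the chain $c_\alpha(y_j) = c_{\alpha''}(y_j) = c_{\alpha''}(y'_{j'}) = c_{\alpha'}(y'_{j'})$, giving the required equality.
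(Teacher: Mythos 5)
Your argument is correct and follows essentially the same route as the paper: part (1) via the canonical assignment $y_j \mapsto x_{\beta(y_j)}$, and part (2) via the same two combinatorial moves the paper uses, namely augmenting a finite presentation by an extra generator mapping to a given $a \in A$ (the augmentation should start from a finite presentation of $A$ itself, which exists under the implicit standing assumption that $A$ is finitely generated --- not from a presentation of a subalgebra, which is not an object of $\mathrm{Pres}^\mathit{fin}_\mathit{lin}(A)$) and gluing two presentations along generators with equal image in $A$. The only difference is packaging: you verify the colimit's universal property directly by constructing the mediating morphism $\phi$, whereas the paper shows the canonical comparison map $\colim \iota \to \mathit{Univ}$ is surjective and injective; the underlying content is identical.
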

\begin{proof}
Part (1):  Let $\alpha\co k\{y_i \: | \: i\in \Lambda\} \twoheadrightarrow A$ be a presentation. We
will show that the set of morphisms $\Hom_{\mathrm{Pres}(A)}(\alpha,\mathit{Univ})$ contains exactly
one element that is linearly monomial.   Any morphism of presentations $f$ from $\alpha$ to
$\mathit{Univ}$ must send each variable $y_i$ to a monomial in $k\{x_a \: | \: a\in A\}$ that is
mapped to $\alpha(y_i)$ by $\mathit{Univ}$.  One option is $f(y_i) = x_{\alpha(y_i)}$, and this is
evidently the unique choice that defines a linearly monomial morphism.

Part (2):  By (1), any finite presentation $\alpha$ admits a unique linearly monomial
morphism $\alpha \to
\mathit{Univ}$, and hence there is a canonical morphism $u\co \colim \iota \to \mathit{Univ}$. Given a
finite presentation $\alpha\co k\{y_1, \ldots, y_n\} \to A$ and an element $a \in A$, we extend to a
new finite presentation $\alpha' \co k\{y_1, \ldots, y_n, y_a\} \to A$ by $y_a \mapsto a$.  The
morphism $\alpha' \to \mathit{Univ}$ sends $y_a$ to $x_a$, and thus any element $x_a$ in the universal
presentation is in the image of some finite presentation, so $u$ is surjective. 

We turn now to injectivity of $u$. If 
\[
	k\{y_1, \ldots, y_n\} \stackrel{\alpha}{\to} A \stackrel{\beta}{\leftarrow} k\{z_1, \ldots, z_m\}
\]
are two finite presentations with $\alpha(y_1) = \beta(z_1)$, then they each map to the presentation
$k\{w, y_2\ldots, y_n, z_2, \ldots, z_m\}$ by $y_1, z_1 \mapsto w$ and identity of all of the other
generators.  Hence $u$ is injective as well.
\end{proof}

\begin{example}
Consider the presentation $\alpha\co k\{y_1,y_2\} \to k\{z\} = A$ given by $\alpha(y_1) = z$ and
$\alpha(y_2)=zdz$.  In addition to the linearly monomial morphism from $\alpha$ to $\mathit{Univ}$,
there are also the monomial morphisms given by sending $y_2$ to $x_z x_{dz}$ or $x_z dx_z$.
\end{example}

\subsection{The universal tropicalization}

We define the \emph{universal tropicalization of $A$} to simply be the tropicalization of the universal presentation, $\mathbf{trop}(\mathit{Univ})$.  

\begin{theorem}
For any differential $k$-algebra $A$, there is a canonical isomorphism of pairs
\[
\mathbf{trop}(\mathit{Univ}) \cong \colim_{\alpha \in \mathrm{Pres}_\mathit{lin}(A)} \mathbf{trop}(\alpha),
\]
and if $A$ admits a finite presentation $k\{x_1, \ldots, x_n\} \to A$ then the colimit can be taken
over the subcategory $\mathrm{Pres}^\mathit{fin}_\mathit{lin}(A)$ of finite presentations.
\end{theorem}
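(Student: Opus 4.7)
The plan is two-part. The first isomorphism follows from an abstract categorical fact together with Proposition \ref{prop:universal-presentation}(1): since $\mathit{Univ}$ is terminal in $\mathrm{Pres}_\mathit{lin}(A)$, for any functor $F$ out of this category the object $F(\mathit{Univ})$ is a cocone over $F$ via the unique morphisms $\alpha \to \mathit{Univ}$, and it satisfies the universal property of the colimit by direct comparison against any other cocone. Applied to $F = \mathbf{trop}$, this yields the canonical isomorphism $\mathbf{trop}(\mathit{Univ}) \cong \colim_{\alpha \in \mathrm{Pres}_\mathit{lin}(A)} \mathbf{trop}(\alpha)$ immediately.

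The substantive content lies in the second assertion. Here I would invoke Proposition \ref{prop:universal-presentation}(2), which realizes $\mathit{Univ}$ as the colimit of the inclusion $\iota \co \mathrm{Pres}^\mathit{fin}_\mathit{lin}(A) \hookrightarrow \mathrm{Pres}_\mathit{lin}(A)$, and then show that $\mathbf{trop}$ preserves this colimit. By Proposition \ref{prop:colim-components}, colimits in $\pairs$ are computed componentwise, and since the reduction functor is a left adjoint by Proposition \ref{prop:reduction-is-left-adjoint} it suffices to verify preservation before reducing.

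I would then unpack $\mathbf{trop}(\alpha)$ into three constructions: (i) form the tropical Ritt pair $\mathbf{S}\{x_i \:|\: i \in \Lambda\}$; (ii) quotient the bottom semiring by the bend congruence $\bend \trop(\ker \alpha)$; (iii) reduce. Stage (i) is handled by the universal property of the tropical Ritt algebra (Proposition \ref{prop:univ-property-of-tropical-ritt}): a compatible family of morphisms out of the $S_1\{x_i \:|\: i \in \Lambda_F\}$ for finite linearly monomial sub-presentations is exactly an $A$-indexed family of target elements, and the same argument runs on the bottom of the pair. For stage (ii), I would identify $\bend \trop(\ker \mathit{Univ})$ on the colimit polynomial semiring with the congruence generated by the images of the $\bend \trop(\ker \alpha_F)$ as $F$ ranges over finite sub-presentations.

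The main obstacle is exactly this last identification, since in general the congruence generated by a union of relation families is not the union of the individually generated congruences. The way through is to use the explicit construction of congruences as transitive-symmetric closures of the subsemiring generated by a generating set (as recalled after \cite[Lemma 2.4.5]{GG1}), together with the observation that the bend relations attached to a single differential polynomial $\trop(f) \in S_0\{x_i \:|\: i \in \Lambda\}_\mathit{basic}$ are supported on the finitely many variables actually appearing in $f$. Hence every generating bend relation for $\bend \trop(\ker \mathit{Univ})$ is already present in the bend congruence of some finite linearly monomial sub-presentation, and the relevant colimit of congruences is computed as expected. Combining the three stages then gives the preservation of the colimit, and the finitely presented hypothesis on $A$ ensures the diagram is filtered enough for this componentwise analysis to conclude.
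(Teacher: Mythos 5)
Your proposal is correct and follows essentially the same route as the paper: the first isomorphism from $\mathit{Univ}$ being terminal in $\mathrm{Pres}_\mathit{lin}(A)$, the reduction to componentwise colimits via Proposition \ref{prop:colim-components} and the left-adjointness of reduction, and the key finite-support observation that every generating bend relation of $\bend\trop(\ker\mathit{Univ})$ already lives in $\bend\trop(\ker\beta)$ for some finite sub-presentation $\beta$ (after enlarging the finite variable set so that it actually surjects onto $A$, a small step the paper makes explicit). The paper packages this as a direct surjectivity-plus-injectivity check on the canonical comparison map $j$ rather than as ``$\mathbf{trop}$ preserves the colimit of Proposition \ref{prop:universal-presentation}(2),'' but the content is identical.
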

\begin{proof}
The first statement is an immediate consequence of Proposition \ref{prop:universal-presentation}.  For the second statement, consider the canonical morphism
\[
j =(j_1, j_0)\co \colim_{\alpha \in \mathrm{Pres}_\mathit{lin}^\mathit{fin}(A)} \mathbf{trop}(\alpha) \to \mathbf{trop}(\mathit{Univ}).	
\]
Given any element $a \in A$, any finite presentation $\alpha\co k\{x_1, \ldots, x_n\} \to A$ maps to a
finite presentation $\alpha'\co k\{x_1, \ldots, x_n, x_a\} \to A$, with $\alpha'(x_a) = a$.  Hence it
follows from Proposition \ref{prop:colim-components} that $j_1$ and $j_0$ are both surjective.  We turn to
injectivity.  Observe that the congruence $\bend \trop (\ker
\mathit{Univ})$ on $S_0\{x_a \: | \: a \in A\}$  is the transitive closure of the symmetric semiring
generated by the bend relations of elements in $\trop (\ker \mathit{Univ})$, and so for any relation
$(f \sim g) \in \bend \trop (\ker \mathit{Univ})$ there exists a finite subset $\Lambda \subset A$
containing all variables appearing in either $f$ or $g$ and such that, for the restriction
$\mathit{Univ}|_\Lambda\co k\{x_a \: | \: a \in \Lambda\} \to A$, we have
\[
(f\sim g) \in \bend \trop (\ker \mathit{Univ}|_\Lambda).
\]
If $\Lambda$ does not generate $A$ as a differential algebra then we may add finitely many elements
so that it does.  We thus have a finite presentation $\beta$ such that
$(f \sim g)$ is in the image of the canonical map
\[
	\bend \trop(\ker \beta) \to \bend \trop (\ker \mathit{Univ}).
\]
Therefore the map
\[
	j_0\co \colim_{ \alpha \in \mathrm{Pres}^\mathit{fin}_\mathit{lin}} (S_0|S_1)\{x_a \: | \: a\in \Lambda\} / \bend \trop(\ker \beta) \to (S_0|S_1)\{x_a \: | \: a\in A\}/ \bend \trop(\ker \mathit{Univ})
\]
is an isomorphism.  The claim now follows from Proposition \ref{prop:colim-components}.
\end{proof}

We now come to our main result, which says that a differential algebra $A$ admits a universal differentially enhanced seminorm which is valued in the tropicalization of the universal presentation of $A$, (c.f. \cite[Theorem A]{GG2}).

\begin{theorem}\label{thm:main-result}
Given a differential $k$-algebra $A$, there is a differentially enhanced seminorm \[\mathbf{u}= (u,\widetilde{u})\co A \to \mathbf{trop}(\mathit{Univ})\] defined by sending $a\mapsto x_a$, and this is initial among differentially enhanced seminorms on $A$ compatible with the differentially enhanced seminorm $\mathbf{v}$ on $k$.
\end{theorem}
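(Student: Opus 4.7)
\emph{Construction of $\mathbf{u}$.} The plan is to set $\widetilde{u}(a) := [x_a]$ in the top component of $\mathbf{trop}(\mathit{Univ})$ (that is, in $S_1\{x_a\mid a\in A\}$ modulo the reduction), and $u := \pi \circ \widetilde{u}$. Each axiom that $\mathbf{u}$ must satisfy is witnessed by an algebraic identity in $A$ that lifts to a specific element of $\ker\mathit{Univ}$: the elements $x_0$, $x_1 - 1$, $x_{-1} + 1$, $x_{ab} - x_a x_b$, $x_{a+b} - x_a - x_b$, $x_c - c$ (for $c \in k$), and $x_{da} - dx_a$ all visibly lie in $\ker\mathit{Univ}$. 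Tropicalizing each and imposing its bend relations forces, in the bottom of the pair $(S_0|S_1)\{x_a\}/\bend\trop(\ker\mathit{Univ})$, the identities $u(0)=0$, $u(1)=u(-1)=1$, multiplicativity of $u$, the ultrametric triangle inequality, compatibility of $u$ with $v$, and the equality $x_{da} = dx_a$ that underlies differential commutation for $\widetilde{u}$.

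\emph{Main obstacle: from bottom to top.} These identities hold in the bottom of the pair a priori; to obtain those that need to live in the top $S_1\{x_a\}$---namely $x_{da} = dx_a$, $x_c = \widetilde{v}(c)$, $x_{ab} = x_ax_b$, and so on---I would collect the corresponding pairs of elements of $S_1\{x_a\}$ and prove that they generate a differential congruence contained in $\ker\pi$. Since the reduction kills the maximal differential congruence in $\ker\pi$, these identities then descend to the top of $\mathbf{trop}(\mathit{Univ})$. The delicate step is simultaneous closure under $d$: applying $d$ to, for instance, $(x_{ab}, x_ax_b)$ yields $(dx_{ab}, d(x_ax_b))$, and one must reconcile the right-hand side using the tropical Leibniz relations in $S_1\{x_a\}$ together with the relations $(x_{d(ab)}, dx_{ab})$ and the multiplicative relations for $(da)b$ and $a(db)$. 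A similar check is needed for $(x_c, \widetilde{v}(c))$, which uses that $\widetilde{v}$ itself commutes with the differential.

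\emph{Initiality and uniqueness.} Given a compatible differentially enhanced seminorm $\mathbf{w}=(w,\widetilde{w})\co A\to\mathbf{T}$, I would invoke Proposition~\ref{prop:quotient-property} with $y_a := \widetilde{w}(a) \in T_1$. The sole verification is that the assignment $d^j x_a \mapsto \pi(d^j \widetilde{w}(a)) = w(d^j a)$ defines an $S_0$-algebra homomorphism from $S_0\{x_a\}_{\mathit{basic}}$ that descends through $\bend\trop(\ker\mathit{Univ})$. This reduces to a bend-relation computation: for $f = \sum_\epsilon f_\epsilon x^\epsilon \in \ker\mathit{Univ}$, the image of $\trop(f)$ under the assignment is $\bigoplus_\epsilon w(f_\epsilon a^\epsilon)$, which tropically vanishes in $T_0$ because $\sum_\epsilon f_\epsilon a^\epsilon = 0$ in $A$ and $w$ is non-archimedean. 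Uniqueness is then immediate: the condition $f\circ \mathbf{u}=\mathbf{w}$ forces $f_1(x_a)=\widetilde{w}(a)$, and Proposition~\ref{prop:ritt-pair-universal-property} determines $f$ uniquely from this data.
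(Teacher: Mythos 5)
Your overall skeleton matches the paper's: define $\widetilde{u}(a)=[x_a]$, get the bottom identities from the bend relations of the tropicalized generators of $\ker\mathit{Univ}$, push what is needed to the top via the reduction (a differential congruence contained in $\ker\pi$ is killed), and obtain initiality from Proposition~\ref{prop:quotient-property}. Your initiality paragraph is essentially the paper's argument (the paper routes the descent check through Lemma~\ref{lem:strong-tropical-basis}, while you verify the bend relations of $\trop(f)$ for each $f\in\ker\mathit{Univ}$ directly; both ultimately rest on the same null-set induction that upgrades the ternary ultrametric axiom to $n$-ary tropical vanishing).

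The problem is in your ``from bottom to top'' step. You propose to establish $x_{ab}=x_ax_b$ (and, per ``and so on,'' presumably the additive relations) in the top $S_1\{x_a\}$ by showing that these pairs together with their $d$-derivatives generate a differential congruence inside $\ker\pi$. This cannot work: the tropical Leibniz relations are \emph{bend} relations on $d(x_ax_b)\oplus x_a dx_b\oplus x_b dx_a$ and, as the paper stresses, they constrain but do not determine $d(x_ax_b)$. Consequently $\pi\bigl(d(x_ax_b)\bigr)$ need not equal $\pi(dx_{ab})=\pi(x_{d(ab)})$, so the $d$-closure of $(x_{ab},x_ax_b)$ escapes $\ker\pi$ and the reduction does not impose this identity. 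Indeed the identity is generally false in the top --- Example~\ref{ex:grigoriev-seminorm} exhibits a differential enhancement that is not multiplicative. Fortunately none of this is required: the definition of a differential enhancement asks only for $\widetilde{u}(0)=0$, $\widetilde{u}(1)=1$, commutation with $d$, and $\pi\circ\widetilde{u}=u$ in the top; multiplicativity and the ultrametric inequality are conditions on $u$ in the bottom alone. The repair is to delete the multiplicative and additive relations from your list and run the congruence argument only for the linear family $(d^nx_a,\,x_{d^na})$ (and the unit/constant relations), for which $d$-closure holds modulo the generators themselves since $d(x_{d^na})$ is honestly determined; this is exactly Lemma~\ref{lem:commutes-with-d} in the paper.
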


As an immediate corollary, we have:

\begin{corollary}
Let $\mathbf{T}$ be an $\mathbf{S}$-algebra.  There is a bijection
\[
	\Hom_{\mathbf{S}\mhyphen \mathrm{alg}}(\mathbf{trop}(\mathit{Univ}), \mathbf{T}) \cong \mathit{Berk}_\mathbf{T}(A).
\]
that is natural in $\mathbf{T}$.  I.e., $\mathbf{trop}(\mathit{Univ})$ co-represents the functor sending $\mathbf{T}$ to the set of differentially enhanced seminorms on $A$ taking values in $\mathbf{T}$ and compatible with $\mathbf{v}$.
\end{corollary}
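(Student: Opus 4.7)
The plan is to construct $\mathbf{u}$ directly and then verify both that it is a differentially enhanced seminorm compatible with $\mathbf{v}$ and that it enjoys the claimed initial property. Writing $\mathbf{trop}(\mathit{Univ}) = (P_1 \to P_0)$ with $P_0 = (S_0|S_1)\{x_a : a \in A\}/\bend \trop(\ker \mathit{Univ})$ and $P_1$ the reduction of $S_1\{x_a : a \in A\}$ relative to the composite $S_1\{x_a\} \to P_0$, I set $\widetilde{u}(a) := [x_a] \in P_1$ and $u(a) := [x_a] \in P_0$.

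Each seminorm axiom for $u$ is read off a specific element of $\ker \mathit{Univ}$: the elements $x_0$, $x_1 - 1$, $x_{-1} - (-1)$, $x_{ab} - x_a x_b$, $x_{a+b} - x_a - x_b$, and $x_c - c$ (for $c \in k$) all lie in $\ker \mathit{Univ}$, and applying $\trop$ then taking the resulting bend relations yields, in $P_0$, the identities $u(0) = 0$, $u(1) = u(-1) = 1$, multiplicativity, the ultrametric relation, and $u(c) = v(c)$. For the differential enhancement axioms on $\widetilde{u}$, each identity $[p] = [q]$ required in $P_1$ corresponds to an element $p - q \in \ker \mathit{Univ}$ — for instance $dx_a - x_{d_A a}$ (commutation with the differential), $x_0$, $x_1 - 1$, and $x_c - \widetilde{v}(c)$ (compatibility with $\widetilde{v}$). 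Because $\ker \mathit{Univ}$ is a differential ideal, the elements $d^j(p - q)$ also lie in $\ker \mathit{Univ}$, so the corresponding bend relations force $\pi(d^j p) = \pi(d^j q)$ in $P_0$ for every $j \geq 0$. Hence the differential congruence on $S_1\{x_a\}$ generated by $(p, q)$ is contained in $\ker \pi$, and by maximality of the reduction congruence it is absorbed, so that $[p] = [q]$ in $P_1$ as needed.

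For initiality, given a compatible differentially enhanced seminorm $\mathbf{w} = (w, \widetilde{w}) \co A \to \mathbf{T}$, I apply Proposition \ref{prop:quotient-property} with $K = \bend \trop(\ker \mathit{Univ})$ and $y_a = \widetilde{w}(a)$: this produces the sought morphism $\varphi \co \mathbf{trop}(\mathit{Univ}) \to \mathbf{T}$ of $\mathbf{S}$-algebras, provided that the $T_0$-point $(w(d_A^j a))_{a, j}$ is a solution to each $\trop(f)$ for $f \in \ker \mathit{Univ}$. The verification is a direct replay of the earlier argument $\trop(\mathrm{Sol}(I)) \subset \mathrm{Sol}(\trop(I))$, now applied to the differentially enhanced seminorm $\mathbf{w}$ on $A$ rather than $\mathbf{v}$ on $k$; compatibility of $\mathbf{w}$ with $\mathbf{v}$ is used to equate $w(f_\alpha) = v(f_\alpha)$ for coefficients $f_\alpha \in k$. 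Uniqueness of $\varphi$ is forced because $\varphi$ must send $\widetilde{u}(a) = [x_a]$ to $\widetilde{w}(a)$.

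The main obstacle is expected to be the second paragraph, specifically lifting bend-relation identities from $P_0$ up to $P_1$. While each individual identity $[p] = [q]$ is handled by a single differential congruence contained in $\ker \pi$, confirming that the reduction simultaneously absorbs all of these congruences — one for every $a \in A$, every $c \in k$, and every order of iterated derivative — requires careful bookkeeping of the interplay between the differential ideal structure on $\ker \mathit{Univ}$ and the reduction functor.
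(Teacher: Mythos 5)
Your proposal is correct, and it is in substance the paper's own route: the paper states this corollary as an immediate consequence of Theorem \ref{thm:main-result}, and what you have written is essentially a proof of that theorem --- construct $\mathbf{u}$ by $a \mapsto x_a$, read the seminorm axioms off explicit elements of $\ker \mathit{Univ}$ via their bend relations, lift identities to the top level through the reduction, and obtain initiality from Proposition \ref{prop:quotient-property}. Two remarks. First, the ``main obstacle'' you flag in the last paragraph is not really an obstacle: each pair $(d^jp, d^jq)$ lands in $\ker\pi$ because $d^j(p-q)\in\ker\mathit{Univ}$, so the differential congruence generated by all such pairs (over all $a$, $c$, and $j$ at once) is a differential congruence contained in $\ker\pi$ by Proposition \ref{prop:generating-a-differetial-congruence}, hence is contained in the maximal such congruence $R(\pi)$ by definition of the reduction; no further bookkeeping is needed. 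This is exactly the paper's Lemma \ref{lem:commutes-with-d}. Second, the one place you genuinely diverge is the descent step in the initiality argument: the paper first proves Lemma \ref{lem:strong-tropical-basis} (via an induction on null sets) to exhibit an explicit generating set for $\bend\trop(\ker\mathit{Univ})$ matching the seminorm axioms, and then checks only those generators against $\mathbf{w}$; you instead verify the bend relations of $\trop(f)$ for \emph{every} $f\in\ker\mathit{Univ}$ by replaying the ``tropicalizations of solutions are solutions'' argument for the seminorm $w$ on $A$. Your route is shorter and avoids the null-set induction (note only that satisfaction of bend relations must be seen to be stable under the ideal operations generating $\trop(\ker\mathit{Univ})$ from the image of $\ker\mathit{Univ}$, which is standard); the paper's route costs more but yields the explicit presentation of the congruence, which it reuses elsewhere.
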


The proof of the theorem requires an explicit description of the congruence $\bend \trop (\ker \mathit{Univ})$, which we provide below.

\begin{proposition}\label{prop:ker-univ-generators}
The differential ideal $\ker \mathit{Univ} \subset k\{x_a \; | \; a\in A\}$ is generated as an ideal by the following family of elements:
\begin{enumerate}
\item $x_1 - 1$;
\item $x_{\lambda a} - \lambda x_a$ for $a\in A$ and $\lambda \in k$;
\item $x_{da} - dx_a$ for $a\in A$;
\item $x_a x_b - x_{ab}$ for $a,b \in A$;	
\item $x_a + x_b + x_c$ for $a,b,c \in A$ satisfying $a+b+c = 0$.
\end{enumerate}
\end{proposition}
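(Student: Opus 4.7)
Let $J$ denote the ideal of $k\{x_a \mid a \in A\}$ generated by the five listed families; the aim is to show $J = \ker \mathit{Univ}$. The inclusion $J \subseteq \ker \mathit{Univ}$ is immediate, since each generator is manifestly killed by $\mathit{Univ}$: for instance $\mathit{Univ}(x_a + x_b + x_c) = a+b+c = 0$ under the hypothesis of family (5), and $\mathit{Univ}(x_{da} - dx_a) = 0$ because $\mathit{Univ}$ commutes with $d$. For the reverse inclusion, my plan is to establish a normal form: every $f \in k\{x_a \mid a \in A\}$ is congruent modulo $J$ to the single variable $x_{\mathit{Univ}(f)}$. Once this is achieved, any $f \in \ker \mathit{Univ}$ satisfies $f \equiv x_0 \pmod J$, and $x_0 = x_{0\cdot 1} - 0\cdot x_1 \in J$ by family (2) at $\lambda=0$, so $f \in J$.

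The normal form is produced in four passes. First, iterating family (3) along the chain $a, da, d^2 a, \ldots$ reduces each formal derivative $d^j x_a$ appearing in $f$ to the single free variable $x_{d^j a}$. Second, iterating family (4) collapses products via $x_{b_1}\cdots x_{b_n} \equiv x_{b_1\cdots b_n}$. Third, families (1) and (2) absorb scalar coefficients through $\lambda = \lambda\cdot 1 \equiv \lambda x_1 \equiv x_{\lambda}$ and $\lambda x_a \equiv x_{\lambda a}$; at this stage every differential monomial appearing in $f$ has become a single variable $x_b$, with $b$ equal to its value under $\mathit{Univ}$. Fourth, family (5) collapses sums: applied first to $(a, b, -(a+b))$ and then to $(-(a+b), a+b, 0)$, together with $x_0 \equiv 0$, it yields $x_a + x_b \equiv x_{a+b} \pmod J$, and iteration converts $\sum_i x_{b_i}$ into the single variable $x_{\sum_i b_i}$, concluding the reduction.

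The main obstacle is the first pass, which implicitly uses that $J$ is closed under the differential $d$ of $k\{x_a \mid a \in A\}$: to promote the base case $x_a^{(1)} \equiv x_{da}$ of family (3) to $x_a^{(j)} \equiv x_{d^j a}$ for $j\geq 2$, one must differentiate the previous congruence. I would address this by verifying directly that $J$ is a differential ideal; it suffices to show $dg \in J$ for each generator $g$. For families (1), (2), (4), and (5) this follows from the Leibniz rule combined with instances of the other families, with the key structural observation being that the hypothesis $a+b+c=0$ of family (5) forces $da+db+dc=0$, so (5) is stable under $d$. Family (3) is then handled by a short induction on the order of the derivative, using that $d$ applied to the family-(3) generator at $a$ can be decomposed, via the family-(3) generator at $da$, into an expression controlled by the inductive hypothesis. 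Alternatively, one may take $J$ to be the \emph{differential} ideal generated by (1)--(5) from the outset; since $\ker \mathit{Univ}$ is itself automatically a differential ideal, this reformulation is harmless and the normal form argument goes through unchanged.
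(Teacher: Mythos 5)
Your reduction-to-normal-form strategy is the same as the paper's (use (3) and (4) to push derivatives and products into the subscripts, (1)--(2) to absorb coefficients, then (5) to collapse the resulting linear expression), and you have put your finger on exactly the step that the paper's own proof passes over: rewriting a higher derivative $d^jx_a$ as $x_{d^ja}$ requires differentiating the family-(3) relation, which is legitimate only if $J$ is closed under $d$. However, your primary repair --- proving that the ordinary ideal $J$ is a differential ideal --- cannot succeed, because $J$ is genuinely not closed under $d$. To see this, let $\psi\co k\{x_a \mid a\in A\} \to A$ be the $k$-algebra homomorphism (not a differential homomorphism) determined on the free polynomial generators by $\psi(x_a)=a$, $\psi(x_a^{(1)})=da$, and $\psi(x_a^{(j)})=0$ for all $j\geq 2$. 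One checks directly that $\psi$ annihilates every element of families (1)--(5), so $J\subseteq \ker\psi$; but $\psi\bigl(x_a^{(2)}-x_{d^2a}\bigr)=-d^2a$, which is nonzero as soon as $A$ contains an element with $d^2a\neq 0$ (e.g.\ $A=k\{z\}$ with $a=z$). Hence $x_a^{(2)}-x_{d^2a}$ lies in $\ker\mathit{Univ}$ but not in $J$, so $J$ is neither a differential ideal nor equal to $\ker\mathit{Univ}$. Correspondingly, your proposed induction for family (3) is circular: after using the generator at $da$ to replace $dx_{da}$ by $x_{d^2a}$, the residual term $x_{d^2a}-d^2x_a$ is precisely the higher-order relation whose membership in $J$ is at issue, not an instance of any inductive hypothesis you have already secured.

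The correct resolution is your fallback option: take $J$ to be the \emph{differential} ideal generated by (1)--(5), equivalently adjoin the derived relations $x_{d^ja}-d^jx_a$ for all $j\geq 1$. With that reading your four-pass normal-form argument (including the clean endgame $f\equiv x_{\mathit{Univ}(f)}$ and $x_0\in J$ via (2) at $\lambda=0$) is complete and matches what the paper's proof actually does. Note, though, that this forces a weakening of the statement itself: ``generated as an ideal'' must be read as ``generated as a differential ideal,'' since by the homomorphism $\psi$ above the displayed list generates a strictly smaller ordinary ideal whenever some second derivative in $A$ is nonzero. The same caveat should be carried forward to the tropicalized version of this result, whose proof reduces $g$ to a linear expression by the identical manipulation of higher derivatives.
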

\begin{proof}
It is clear that all of these relations are in the kernel.  Let $f$ be an arbitrary element in the
kernel. Using relations (2), (3) and (4) we move all coefficients and differentials into the
subscripts and reduce each monomial to a single variable, so $f$ is transformed into a degree 1
non-differential polynomial $f' = \sum_{a\in \Lambda} x_a$, for some finite subset $\Lambda \subset
A$.  We can then use relation (5) (and (2) with $\lambda = -1$) repeatedly to reduce the number of
terms by replacing $x_{a_1} + x_{a_2}$ with $x_{a_1 + a_2}$, and thus we transform $f'$ to a
trinomial, which is in the kernel if and only if it is an expression of type (5).
\end{proof}

Tropicalizing the above family of elements, we have:
\begin{lemma}\label{lem:strong-tropical-basis}
The congruence $\bend \trop (\ker \mathit{Univ})$ on $S_0\{x_a \; | \; a\in A\}_\mathit{basic}$ is generated by the bend relations of the polynomials:
\begin{enumerate}
\item $x_1 \oplus 1_{S_0}$
\item $x_{\lambda a} \oplus v(\lambda) x_a$ for $a\in A$ and $\lambda \in k$;
\item $x_{da} \oplus dx_a$ for $a\in A$;
\item $x_a x_b \oplus x_{ab}$ for $a,b \in A$;	
\item $x_a \oplus x_b \oplus x_c$ for $a,b,c \in A$ satisfying $a+b+c = 0$.
\end{enumerate}
\end{lemma}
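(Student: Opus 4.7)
The plan is to prove the lemma by establishing both inclusions of congruences. Write $C$ for the congruence on $S_0\{x_a \mid a \in A\}_\mathit{basic}$ generated by the bend relations of the polynomials (1)--(5).

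For the inclusion $C \subseteq \bend \trop(\ker \mathit{Univ})$, each of the polynomials (1)--(5) is the tropicalization of a corresponding generator from Proposition \ref{prop:ker-univ-generators}, using the identities $v(1) = v(-1) = 1_{S_0}$ to absorb signs under tropicalization. Hence their bend relations are among those defining $\bend \trop(\ker \mathit{Univ})$.

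For the reverse inclusion, the goal is to show that for every $f \in \ker \mathit{Univ}$, the bend relations of $\trop(f)$ lie in $C$. I would proceed in two stages. The first stage is a \emph{monomial reduction}: every tropical monomial $v(\lambda) x_{a_1}^{k_1}(dx_{a_2})^{k_2}(d^2 x_{a_3})^{k_3}\cdots$ is shown to be congruent modulo $C$ to a single variable $x_b$, where $b \in A$ is the image of the corresponding classical monomial under $\mathit{Univ}$. This uses relation (3) to push differentials into subscripts, relation (4) (by induction on total degree) to collapse products into a single variable, relation (2) to absorb the coefficient $v(\lambda)$, and relation (1) to identify $1_{S_0}$ with $x_1$ for constant monomials. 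As a result, for $f = \sum_\alpha f_\alpha x^\alpha \in \ker \mathit{Univ}$ one obtains
\[
\trop(f) = \bigoplus_\alpha v(f_\alpha) x^\alpha \;\sim\; \bigoplus_\alpha x_{b_\alpha} \pmod{C},
\]
where $b_\alpha := \mathit{Univ}(f_\alpha x^\alpha) \in A$ and $\sum_\alpha b_\alpha = \mathit{Univ}(f) = 0$.

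The second stage is a combinatorial induction: for any finite family $\{b_1, \ldots, b_n\} \subset A$ with $\sum_i b_i = 0$, I claim that all bend relations of $\bigoplus_{i=1}^n x_{b_i}$ lie in $C$. The cases $n \leq 3$ follow directly from (5), together with (2) for $n = 2$ (using $x_{-b_1} \sim v(-1) x_{b_1} = x_{b_1}$). For $n \geq 4$, set $s := b_1 + b_2$ so that both $(b_1, b_2, -s)$ and $(s, b_3, \ldots, b_n)$ sum to zero; applying (5) and the identification $x_{-s} \sim x_s$ via (2), the bend relations of $x_{b_1} \oplus x_{b_2} \oplus x_s$ lie in $C$, and combining this with the inductive hypothesis on $\{s, b_3, \ldots, b_n\}$ one can chain these relations (inserting and eliminating $x_s$ as an auxiliary term) to obtain every bend relation of $\bigoplus x_{b_i}$. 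Since $C$ is a congruence closed under $\oplus$, the equivalence from Stage 1 transports bend relations of $\bigoplus_\alpha x_{b_\alpha}$ back to those of $\trop(f)$, completing the argument.

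The main obstacle is the combinatorial induction in Stage 2: we must derive \emph{every} bend relation $B_j$ of an $n$-term sum from the trinomial relations (5), and the bookkeeping of inserting and removing the auxiliary element $x_s$ must be arranged so that all $n$ distinguished deletions are covered. The remainder of the argument is essentially routine: Stage 1 is a straightforward iteration of (2)--(4) on monomials, and the kernel condition $\mathit{Univ}(f) = 0$ is exactly what is needed to feed Stage 2 with a family summing to zero.
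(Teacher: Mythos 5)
Your proposal is correct and follows essentially the same route as the paper's proof: reduce an arbitrary element of $\trop(\ker\mathit{Univ})$ via the bend relations of (1)--(4) to a sum $\bigoplus x_{b_i}$ over a ``null set'' $\sum b_i = 0$, then induct on the size of the null set by splitting off $s = b_1 + b_2$, using relation (5) for the two smaller null sets and relation (2) to identify $x_{-s}$ with $x_s$ while inserting and deleting the auxiliary term. The only (harmless) difference is that you phrase the reduction for $\trop(f)$ with $f \in \ker\mathit{Univ}$ rather than for arbitrary elements of the ideal $\trop(\ker\mathit{Univ})$, but the identical argument covers the general case.
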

\begin{proof}
It suffices to show that the bend relations of listed expressions imply the the bend relations of any element $g\in
\trop(\ker \mathit{Univ})$.  As in the proof of Proposition \ref{prop:ker-univ-generators} above, using
the bend relations of (1)--(4) allows us reduce $g$ to an expression of the form 
\[
\bigoplus_{a \in \Lambda} x_a, \text{ with } \sum_{a\in \Lambda} a = 0.
\]
Let us call a finite set $\Lambda \subset A$ a \emph{null set} if $\sum_\Lambda a = 0.$ It remains to show that the bend relations of sums over null sets of size 3 (i.e., relation (5) from the list) imply the bend relations for sums as above over null sets of arbitrary size.

We prove this by induction on the cardinality $n$ of the null set $\Lambda$. The base case $n=3$
is simply relation (5).  Assume the bend relations hold for all sums over null sets of size $\leq
n$, and consider a null set $\Lambda = \{a_1, \ldots, a_{n+1}\}$.  Let 
\[
b = a_1 + a_2 = -(a_3 + \cdots + a_{n+1}),
\]
so we have null sets $\Lambda_1 = \{a_1, a_2, -b\}$ and $\Lambda_2 = \{b, a_3, \ldots, a_{n+1}\}$ of size 3 and $n$.  Then
\begin{align*}
	x_{a_1} &\oplus x_{a_2} \oplus \cdots \oplus x_{a_{n+1}} \quad & \\
	&\sim x_{a_1} \oplus x_{a_2} \oplus \cdots \oplus x_{a_{n+1}} \oplus x_{-b} \quad &\text{(using the bend relations of $\Lambda_1$ to pull out $x_{-b}$)}\\
	&\sim  x_{a_2} \oplus \cdots \oplus x_{a_{n+1}}\oplus x_{-b} \quad &\text{(using the bend relations of $\Lambda_1$ to delete $x_{a_1}$)} \\
	&\sim x_{a_2} \oplus \cdots \oplus x_{a_{n+1}} \oplus x_{b} \quad &\text{(since $x_b \sim x_{-b}$ by (2))}\\
	&\sim  x_{a_2} \oplus \cdots \oplus x_{a_{n+1}} \quad &\text{(using the bend relations of $\Lambda_2$ to delete $x_{b}$).}
\end{align*}
Since $a_1$ was chosen arbitrarily, this shows that the bend relations of sums over null sets of size $n+1$ hold.
\end{proof}
Note that relations (1), (2), (4) and (5) correspond to the four conditions defining a non-archimedean seminorm in Definition \ref{def:generalized-valuations}.

Let us write $U_1 \stackrel{\pi}{\to} U_0$ for the pair $\mathbf{trop}(\mathit{Univ})$, which is the reduction of the pair \[S_1\{x_a \: | \: a\in A\} \to (S_0 | S_1)\{x_a \: | \: a\in A\} / \bend \trop( \ker \mathit{Univ}).\]  

\begin{lemma}\label{lem:commutes-with-d}
The relation $d(x_a) = x_{da}$ holds in $U_1$.
\end{lemma}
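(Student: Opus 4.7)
The plan is to exhibit a differential congruence $L$ on $S_1\{x_a \mid a \in A\}$ which contains every pair $(dx_a, x_{da})$ and is contained in $\ker \pi$, where $\pi \co S_1\{x_a\} \to (S_0 | S_1)\{x_a\}/\bend \trop(\ker \mathit{Univ})$ is the defining projection of the pair. Since $U_1 = S_1\{x_a\}/R(\pi)$ and $R(\pi)$ is the maximal differential congruence contained in $\ker \pi$, any such $L$ lies in $R(\pi)$ and the lemma follows immediately.

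The first step is to establish that the extended relations $(d^j x_a, x_{d^j a})$ lie in $\ker \pi$ for every $a \in A$ and $j \geq 1$. The case $j = 1$ is Lemma~\ref{lem:strong-tropical-basis}(3). For $j \geq 2$, since $\ker \mathit{Univ}$ is a differential ideal containing $x_{da} - dx_a$ by Proposition~\ref{prop:ker-univ-generators}(3), applying $d$ iteratively and each time reducing via relation~(3) with $a$ replaced by its successive derivatives yields $x_{d^j a} - d^j x_a \in \ker \mathit{Univ}$; tropicalising and taking bend closure then produces the identity $\pi(d^j x_a) = \pi(x_{d^j a})$. One then takes $L$ to be the differential congruence generated by the full family $\{(d^j x_a, x_{d^j a}) : a \in A,\, j \geq 0\}$. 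Closure of $L$ under $d$ proceeds as in Proposition~\ref{prop:generating-a-differetial-congruence}: applying $d$ to a generator produces $(d^{j+1} x_a, d x_{d^j a})$, which lies in $L$ via transitivity with the generators $(d^{j+1} x_a, x_{d^{j+1} a})$ and $(d x_{d^j a}, x_{d^{j+1} a})$.

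The remaining verification is that $L \subseteq \ker \pi$. Each generator lies in $\ker \pi$ by the first step, and $\ker \pi$ absorbs the semiring closure. The main obstacle concerns the interaction of $d$-closure with multiplicative closure: because the tropical Leibniz rule only provides the bend identity $d(pq) \oplus p\,dq \oplus q\,dp = p\,dq \oplus q\,dp$ rather than a strict derivation equality, one cannot naively transfer an $L$-equivalence on factors to a $\ker \pi$-equivalence on the derivative of their product. The remedy is to combine this bend identity, which holds as a genuine equality in $S_1\{x_a\}$ by construction of the Ritt algebra, with the additional bend relations in the bottom quotient coming from tropicalising $d$ applied to generators (4) and (5) of Proposition~\ref{prop:ker-univ-generators}; these describe $x_{d(ab)}$ in terms of variables associated to derivatives of $a$ and $b$, and together with the tropical Leibniz identity they suffice to match the two sides of $(d(p_1 p_2), d(q_1 q_2))$ modulo $\bend \trop(\ker \mathit{Univ})$. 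Once this verification is complete, $L \subseteq R(\pi)$ by maximality, so $(dx_a, x_{da}) \in R(\pi)$ and $d(x_a) = x_{da}$ holds in $U_1$.
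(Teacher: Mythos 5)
Your proposal follows essentially the same route as the paper: the paper's proof likewise observes that every pair $(d^n x_a, x_{d^n a})$ lies in the kernel of the projection to the bottom of the unreduced pair (via relation (3) of Lemma~\ref{lem:strong-tropical-basis} applied to successive derivatives, using that $\ker \mathit{Univ}$ is a differential ideal) and concludes that $d(x_a) \sim x_{da}$ belongs to the maximal differential congruence $R(\pi)$ defining the reduction. If anything your write-up is more scrupulous than the paper's one-paragraph argument --- the closure-under-$d$ issue for products with arbitrary elements that you raise in your final paragraph is left entirely implicit in the paper --- though be aware that your claim that the tropicalized derivatives of generators (4) and (5) together with the tropical Leibniz identity ``suffice to match the two sides'' is itself asserted rather than carried out.
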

\begin{proof}
Observe that for any $a\in A$ and any $n\in \mathbb{N}$ the elements $d^nx_a$ and $x_{d^na}$ in
$U_1$ are mapped to the elements in $U_0$ that are equivalent modulo the congruence $\bend
\trop(\ker \mathit{Univ})$ by Lemma \ref{lem:strong-tropical-basis} relation (3).  Hence the
relation $d(x_a) \sim x_{da}$ is in the reduction congruence and so it holds in $U_1$.
\end{proof}

\begin{proof}[Proof of Theorem \ref{thm:main-result}]
It is clear from the definition that $\pi \circ \widetilde{u} = u$.  By Lemma
\ref{lem:commutes-with-d}, the map $\widetilde{u}\co A \to U_1$ commutes with the differential, and
by relations (1), (4), and (5) of Lemma \ref{lem:strong-tropical-basis}, the map $u\co A \to U_0$ is
a non-archimedean seminorm.  Thus $\mathbf{u}$ is a differentially enhanced
seminorm.  Moreover, relation (2) implies that $u$ is compatible with the seminorm $v\co A \to S_0$.

It remains to show that there is a unique morphism from $\mathbf{u}$ to any other differentially
enhanced seminorm $\mathbf{w}=(w,\widetilde{w})\co A \to \mathbf{T}$ compatible with $\mathbf{v}\co A
\to \mathbf{S}$.  Let $\widetilde{y}_a = \widetilde{w}(a) \in T_1$.  Mapping these elements and their
derivatives down to $T_0$ gives a list of elements  $y^{(j)}_a=w(d^j a)  \in T_0$ for $a\in A$ and
$j\in \mathbb{N}$.  Since $w$ is a non-archimedean seminorm, it follows from Lemma \ref{lem:strong-tropical-basis}
that the semiring homomorphism
\[
S_0\{x_a \:| \: a\in A\}_{\mathit{basic}}  \to T_0
\]
 sending $d^j x_a \mapsto y^{(j)}_a$ descends to the quotient by the congruence $\bend\trop(\ker \mathit{Univ})$.
Hence, by Proposition \ref{prop:quotient-property}, there is a unique morphism of $\mathbf{S}$-algebras $f_\mathbf{w}\co
 \mathbf{trop}(\mathit{Univ}) \to \mathbf{T}$ sending $x_a$ to $\widetilde{y}_a$ on top, and to
 $y_a$ on the bottom.  By construction, composition with $f_\mathbf{w}$ sends $\mathbf{u}$ to
 $\mathbf{w}$, and this completes the proof.
\end{proof}

\end{document}